\newtheorem{thm}{Theorem}[section]
\newtheorem{lem}[thm]{Lemma}
\newtheorem{prop}[thm]{Proposition}
\theoremstyle{definition}
\theoremstyle{remark}
\newtheorem{rem}[thm]{Remark}
\numberwithin{equation}{section}
\begin{document}

\title{On a twisted conical K\"ahler-Ricci flow}

\author{Yashan Zhang}

\address{Beijing International Center for Mathematical Research, Peking University, Beijing 100871, China}
\email{yashanzh@pku.edu.cn}

\begin{abstract}
In this paper, we discuss diameter bound and Gromov-Hausdorff convergence of a twisted conical K\"ahler-Ricci flow on the total spaces of some holomorphic submersions. We also observe that, starting from a model conical K\"ahler metric with possibly unbounded scalar curvature, the conical K\"ahler-Ricci flow will instantly have bounded scalar curvature for $t>0$, and the bound is of the form $\frac{C}{t}$. Several key results will be obtained by direct arguments on the conical equation without passing to a smooth approximation. In the last section, we present several remarks on a twisted K\"ahler-Ricci flow and its convergence.
\end{abstract}

\maketitle

\section{Introduction}
The conical K\"ahler-Ricci flow is the K\"ahler-Ricci flow with certain cone singularities, whose existence, regularity and convergence have been widely studied in the recent years, see e.g. \cite{CW1,CW2,Ed15,Ed17,LZ,LZ2,No,Ta,Wy,Yi,Zy} and references therein.
\par In this paper, we shall discuss some properties of the conical K\"ahler-Ricci flow, focusing on the long time collapsing limits and a phenomenon on scalar curvature.

\par Let's begin with the general setup as follows. Let $X$ be an $n$-dimensional compact K\"ahler manifold with a K\"ahler metric $\omega_0$ and $D=\sum_{i=1}^lD_i$ a simple normal crossing divisor on $X$ with every $D_i$ an irreducible complex hypersurface and $\beta_i\in(0,1)$. Fix a defining section $S_i$ for $D_i$ on $X$ and a Hermitian metric $h_i$ on the line bundle $L_i$ associated to $D_i$. We fix a sufficiently small positive constant $\delta$ such that 
$$\omega_0^*:=\omega_0+\delta\sum_{i=1}^{l}\sqrt{-1}\partial\bar\partial|S_i|^{2\beta_i}_{h_i}$$
is a conical K\"ahler metric on $X$ with cone angle $2\pi\beta_i$ along $D_i$, $i=1,...,l$. We call $\omega_0^*$ a model conical K\"ahler metric (with respect to $D$). Then consider the conical K\"ahler-Ricci flow starting from $\omega_0^*$:
\begin{equation}\label{CKRF.1}
\left\{
\begin{aligned}
\partial_t\omega(t)&=-Ric(\omega(t))+2\pi\sum_{i=1}^l(1-\beta_i)[D_i]\\
\omega(0)&=\omega_0^*,
\end{aligned}
\right.
\end{equation}
By \cite{CW1,CW2,LZ,Sh} (which generalizes \cite{C,Ts,TZo}), the conical K\"ahler-Ricci flow \eqref{CKRF.1} has a solution up to
$$T_{max}:=\{t>0|[\omega_0]+t2\pi(c_1(K_X)+\sum_{i=1}^l(1-\beta_i)[D_i])>0\}.$$
See subsection \ref{reduction} for a more precise definition of ``a solution to the conical K\"ahler-Ricci flow".

\subsection{A general phenomenon on scalar curvature along the conical K\"ahler-Ricci flow}\label{general}
Firstly, we would like to note that, as a smooth K\"ahler metric on the open (non-compact) manifold $X\setminus D$, the scalar curvature of $\omega_0^*$ on $X\setminus D$ may not be a bounded function (see e.g. Remark \ref{rem_unbdd}). However, we observe that, starting from a model conical K\"ahler metric $\omega_0^*$ with possibly unbounded scalar curvature, the conical K\"ahler-Ricci flow will instantly have bounded scalar curvature for $t\in(0,t_0]$ ($t_0$ is a sufficiently small number), and the bound is of the form $\frac{C}{t}$. Precisely, we have
\begin{thm}\label{scal_thm}
Let $\omega(t)_{t\in[0,T_{max})}$ be the solution to \eqref{CKRF.1} and $R(t):=R(\omega(t))$ the scalar curvature of $\omega(t)$ on $X\setminus D$. For any fixed $t_0\in(0,T_{max})$, there exists a constant $C\ge1$ such that for any $t\in(0,t_0]$,
\begin{equation}\label{scal_bound}
\sup_{X\setminus D}|R(t)|\le\frac{C}{t}.
\end{equation}
\end{thm}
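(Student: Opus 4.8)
\begin{pf}
The plan is to pass to the parabolic complex Monge--Amp\`ere equation underlying \eqref{CKRF.1}, to express the scalar curvature as a second time derivative of the potential, and then to prove the two sides of \eqref{scal_bound} by two separate maximum-principle arguments run directly on the conical equation. Following subsection~\ref{reduction}, write $\omega(t)=\hat\omega_t+\sqrt{-1}\partial\bar\partial\varphi(t)$, with $\hat\omega_t$ a suitable smooth family of conical $(1,1)$-forms along $D$ (so $\hat\omega_0=\omega_0^*$ and $\chi:=\partial_t\hat\omega_t$ is a fixed smooth closed $(1,1)$-form on $X$) and $\Omega$ a conical reference volume form, so that \eqref{CKRF.1} becomes $\partial_t\varphi=\log\big(\omega(t)^n/\Omega\big)$ with $\varphi(0)=0$; this is available on $[0,t_0]$ precisely because $t_0<T_{max}$. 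Since $\omega_0^*$ and $\Omega$ carry the same cone singularity along $D$, the function $\dot\varphi(0)=\log\big((\omega_0^*)^n/\Omega\big)$ is bounded on $X$, although $\sqrt{-1}\partial\bar\partial\dot\varphi(0)$ --- equivalently $R(\omega_0^*)$ --- need not be (cf.\ Remark~\ref{rem_unbdd}). Differentiating $\dot\varphi=\log(\omega(t)^n/\Omega)$ in $t$ and using $\partial_t\omega(t)=-Ric(\omega(t))$ on $X\setminus D$ gives the basic identity $R(t)=-\ddot\varphi$ on $X\setminus D$, where $\ddot\varphi:=\partial_t^2\varphi$, so \eqref{scal_bound} is equivalent to the two-sided bound $|\ddot\varphi|\le C/t$ on $(X\setminus D)\times(0,t_0]$. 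Setting $\Xi:=-\sqrt{-1}\partial\bar\partial\log\Omega$, which extends from $X\setminus D$ to a smooth closed form on $X$, one also has $Ric(\omega(t))=\Xi-\sqrt{-1}\partial\bar\partial\dot\varphi$ and hence $R(t)=\mathrm{tr}_{\omega(t)}\Xi-\Delta_{\omega(t)}\dot\varphi$ on $X\setminus D$.

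Next I would record the standard first-order estimates for the conical K\"ahler--Ricci flow: a uniform bound $|\dot\varphi|\le C$ on $(X\setminus D)\times[0,t_0]$ (from the maximum principle applied to $\square\dot\varphi=\mathrm{tr}_{\omega(t)}\chi$ coupled with $\varphi$, using $\square\varphi=\dot\varphi-n+\mathrm{tr}_{\omega(t)}\hat\omega_t$ and that $\dot\varphi(0)$ is bounded), hence $e^{-C}\le\omega(t)^n/(\omega_0^*)^n\le e^{C}$ and $|\varphi|\le C$ on $[0,t_0]$; and, since $\omega_0^*\ge\tfrac12\omega_0$ for $\delta$ small (hence $\mathrm{tr}_{\omega_0^*}\omega_0\le 2n$), a bound $\mathrm{tr}_{\omega(t)}\omega_0\le C$ on $[0,t_0]$ from the parabolic Schwarz lemma with respect to the \emph{smooth} metric $\omega_0$, which has bounded bisectional curvature. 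In particular $|\mathrm{tr}_{\omega(t)}\Xi|\le C$ and $\omega(t)\ge\tfrac1C\omega_0$ on $(X\setminus D)\times[0,t_0]$. Throughout, $\square:=\partial_t-\Delta_{\omega(t)}$.

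The lower bound $R(t)\ge-C/t$ is the easy half and requires no hypothesis on $R(0)$. On $X\setminus D$ the flow is a smooth K\"ahler--Ricci flow, so one has the standard evolution $\square R=|Ric(\omega(t))|_{\omega(t)}^2\ge R^2/n$; comparing $\inf_X R(t)$ with the solution of the ODE $y'=y^2/n$ issued (at any time $\varepsilon>0$) from $\inf_X R(\varepsilon)$ --- which is finite for each $\varepsilon>0$ since $\omega(\varepsilon)$ is then a genuine conical K\"ahler metric along $D$ (cf.\ \cite{CW1,CW2,LZ}) --- and letting $\varepsilon\to0$ yields $R(t)\ge-n/t$ for all $t\in(0,T_{max})$, which is stronger than required. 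The minimum principle is run directly on $X\setminus D$, a barrier near $D$ adapted to the cone structure of $\omega(t)$ ensuring that the infimum is not lost at $D$.

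The upper bound $R(t)\le C/t$, i.e.\ $\ddot\varphi\ge-C/t$, is the main point. Here $\square\ddot\varphi=-|Ric(\omega(t))|_{\omega(t)}^2\le0$ shows $\ddot\varphi$ is a heat subsolution, which gives no lower bound on $\ddot\varphi$ by itself, and the scalar identity $R=\mathrm{tr}_{\omega(t)}\Xi-\Delta_{\omega(t)}\dot\varphi$ is an identity, not an estimate. One therefore needs a differential-Harnack (Li--Yau/Perelman-type) inequality for $R$, to be established by running the maximum principle on an auxiliary quantity of the form $u:=tR+A\dot\varphi+B\varphi$ (or a variant also involving $\mathrm{tr}_{\omega(t)}\omega_0$), with $A,B$ chosen so that, at an interior maximum, the positive term $t|Ric|^2$ produced by $\square(tR)=R+t|Ric|^2$ is controlled --- the factor $t$ being the natural parabolic-smoothing scale responsible for the $1/t$ in \eqref{scal_bound}, exactly as for instantaneous estimates along the (smooth) K\"ahler--Ricci flow from rough initial data. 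I expect the main obstacle to be twofold: (i) organizing the evolution inequalities --- which forces the use of genuinely second-order information on $\omega(t)$, through a Bochner-type identity together with the first-order estimates above, rather than the scalar identities alone --- so that they close up and give $\ddot\varphi\ge-C/t$ rather than a vacuous inequality; and (ii) justifying every maximum-principle step over the non-compact manifold $X\setminus D$ near the divisor, without passing to a smooth approximation, by means of cut-offs and barriers adapted to the cone behaviour of $\omega_0^*$ and $\omega(t)$ along $D$ --- the technical thread running through the paper.
\end{pf}
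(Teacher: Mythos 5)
Your overall framing is right — the lower bound on $R$ is the easy half via $\square R=|Ric|^2\ge R^2/n$, the upper bound is the real content and cannot come from the scalar identities alone, and the factor $t$ enters through a $t$-weighted maximum principle — but the proposal stops exactly where the proof has to begin. For the upper bound you write down the candidate quantity $u:=tR+A\dot\varphi+B\varphi$ and then list the two obstacles (closing up the evolution inequalities; justifying the maximum principle near $D$) without resolving either, and the naive choice does not close: at a maximum of $tR$ the evolution $\square(tR)=R+t|Ric|^2$ has the \emph{wrong} sign on $t|Ric|^2$, so no contradiction is produced. The paper's actual mechanism is a specific two-stage second-order estimate on $u:=\partial_t\varphi+\varphi+\delta\eta$: first a gradient bound $t|\nabla u|^2\le C$ obtained by applying the maximum principle to $t\bigl(|\nabla u|^2/(B-u)^s+C_1\,tr_{\omega(t)}f^*\chi\bigr)$ (the $(B-u)^{-s}$ weight with $s\in(0,1)$ generates the good quartic term $-c|\nabla u|^4$ that absorbs everything else), and then a lower bound $t\,\Delta_{\omega(t)}u\ge-C$ from the quantity $t\bigl(\Delta_{\omega(t)}u-3(|\nabla u|^2+C_1\,tr_{\omega(t)}f^*\chi)\bigr)$, whose evolution produces $\tfrac1n t(\Delta u)^2$ as the good term; the conclusion then follows from $R=-tr_{\omega(t)}\chi-\Delta_{\omega(t)}u$ together with the Schwarz-lemma bound on $tr_{\omega(t)}\chi$. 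None of this chain appears in your write-up, so the main assertion is not proved.

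Two further points. First, your lower-bound argument is circular: you start the ODE comparison from $\inf_X R(\varepsilon)$ and justify its finiteness by saying $\omega(\varepsilon)$ is "a genuine conical K\"ahler metric" — but a conical K\"ahler metric need not have bounded scalar curvature (this is precisely Remark~\ref{rem_unbdd} about $\omega_0^*$ itself), and the finiteness of $\inf_X R(\varepsilon)$ for $\varepsilon>0$ is part of what the theorem asserts. The paper avoids this by running the minimum principle on $te^t(\tilde R+n)$, which vanishes at $t=0$ and so needs no initial curvature bound. Second, you propose to run every maximum principle directly on $X\setminus D$ with unspecified "cut-offs and barriers adapted to the cone behaviour"; this is not a routine step (the quantities $|\nabla u|^2$ and $\Delta u$ are exactly the ones whose boundary behaviour at $D$ is unknown a priori), and the paper explicitly abandons the direct approach for this estimate and passes to the smooth approximation $\omega^*_{0,\epsilon}=\omega_0+\sqrt{-1}\partial\bar\partial\eta_\epsilon$, proving $\epsilon$-independent bounds on the compact manifold $X$ and letting $\epsilon\to0$. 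As it stands the proposal is a plausible research plan, not a proof.
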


If we regard the conical K\"ahler-Ricci flow \eqref{CKRF.1} as a smooth K\"ahler-Ricci flow on the open (non-compact) K\"ahler manifold $X\setminus D$, then it seems our observation in Theorem \ref{scal_thm} fits also in the study of the K\"ahler-Ricci flow starting from a K\"ahler metric with unbounded curvature, where the phenomenon in Theorem \ref{scal_thm} has usually happened even for Riemannian curvature, i.e. once one can solve the K\"ahler-Ricci flow from a K\"ahler metric with unbounded curvature, then Riemannian curvature of solution in some cases has a bound of the form $\frac{C}{t}$ instantly, see e.g. \cite{Si,HT} and references therein for more precise results and discussions. From this viewpoint, it seems interesting to ask: \emph{can we prove the analogue result of Theorem \ref{scal_thm} for Riemannian curvature?} Even more, if this is true, then it provides an effective way to produce a (not necessarily model) conical K\"ahler metric with bounded curvature.\\

\noindent\emph{Remark}
We should mention that, in the special case that $X$ is Fano, a weaker version of Theorem \ref{scal_thm} (i.e. replacing the bound $\frac{C}{t}$ in \eqref{scal_bound} by $\frac{C}{t^2}$) seems contained in arguments in \cite[Proposition 4.1, Lemma 4.6]{LZ} and \cite[Proposition 4.1, Theorem 4.4]{LZ2}.

\subsection{Diameter bound and convergence}\label{diam}
Next, we discuss the diameter bound and convergence of the conical K\"ahler-Ricci flow when $X$ additionally admits a holomorphic submersion over another compact K\"ahler manifold.

\par Let $f:X\to Y$ be a holomorphic submersion between two compact K\"ahler manifolds with $n=dim(X)>dim(Y)=k\ge1$ and connected fibers. Fix an irreducible complex hypersurface $D'$ on  $Y$, a defining section $S'$ for $D'$ on $Y$ and a Hermitian metric $h'$ on the line bundle $L'$ associated to $D'$. 
\par Set $D:=f^*D'$, a smooth irreducible complex hypersurface on $X$. Then $S:=f^*S'$ is a defining section for $D$ and $h:=f^*h'$ is a Hermitian metric on $L$, the line bundle associated to $D$. 
\par For an arbitrary K\"ahler metric $\omega_0$ on $X$ and $\beta\in(0,1)$, we fix a sufficiently small positive constant $\delta$ such that
\begin{equation}\label{ref_metric.1}
\omega_0^*:=\omega_0+\delta\sqrt{-1}\partial\bar\partial|S|^{2\beta}_h
\end{equation}
is a model conical K\"ahler metric on $X$ with cone angle $2\pi\beta$ along $D$ and
\begin{equation}\label{ref_metric}
\chi^*=\chi+\delta\sqrt{-1}\partial\bar\partial|S'|^{2\beta}_{h'}
\end{equation}
is a model conical K\"ahler metric on $Y$ with cone angle $2\pi\beta$ along $D'$.
Then, again we consider the following conical K\"ahler-Ricci flow on $X$ starting from $\omega_0^*$:
\begin{equation}\label{CKRF}
\left\{
\begin{aligned}
\partial_t\omega(t)&=-Ric(\omega(t))+2\pi(1-\beta)[D]\\
\omega(0)&=\omega_0^*,
\end{aligned}
\right.
\end{equation}
which has a solution up to
$$T_{max}:=\{t>0|[\omega_0]+t2\pi(c_1(K_X)+(1-\beta)[D])>0\}.$$
We now assume there exists a K\"ahler metric $\chi$ on $Y$ such that
\begin{equation}\label{semiample}
f^*\chi\in\frac{1}{T_{max}}[\omega_0]+2\pi(c_1(K_X)+(1-\beta)[D]).
\end{equation}
Easily, \eqref{semiample} implies
\begin{equation}
f^*(\chi-(1-\beta)R_{h'})\in \frac{1}{T_{max}}[\omega_0]+2\pi c_1(K_X).
\end{equation}

In this paper, given above setting, we study the convergence of the following twisted conical K\"ahler-Ricci flow:
\begin{equation}\label{TCKRF}
\left\{
\begin{aligned}
\partial_t\omega(t)&=-Ric(\omega(t))-\omega(t)+\frac{1}{T_{max}}\omega_0+2\pi(1-\beta)[D]\\
\omega(0)&=\omega_0^*,
\end{aligned}
\right.
\end{equation}

In general, $0<T_{max}\le\infty$. If $T_{max}=\infty$, then \eqref{TCKRF} is just the usual (normalized) conical K\"ahler-Ricci flow and the fiber is Calabi-Yau; if $T_{max}<\infty$, then \eqref{TCKRF} is really a twisted conical K\"ahler-Ricci flow and the fiber is Fano. We will study these two different cases unifiedly. In both cases, \eqref{TCKRF} can be solved for $t\in[0,\infty)$. 
\par One of the most natural problems is to understand the long time behavior of the flow. Our main result can be stated as follows.

\begin{thm}\label{main_thm}
Assume \eqref{semiample} holds. Let $\omega(t)_{t\in[0,\infty)}$ be the solution to \eqref{TCKRF} and $(X,d_t)$ the metric completion of $(X\setminus D,\omega(t))$. We have the following conclusions:
\begin{itemize}
\item[(1)] There exists a positive constant $C<\infty$ such that 
\begin{equation}
\sup_{t\in[0,\infty)}diam(X,d_t)\le C.
\end{equation}

\item[(2)] There exists a conical K\"ahler metric $\overline\chi$ on $Y$ with cone angle $2\pi\beta$ along $D'$ such that, as $t\to\infty$,
\begin{itemize}
\item[(2.1)] $\omega(t)\to f^*\overline\chi$ as currents on $X$;
\item[(2.2)] there exists an $\varepsilon_0\in(0,1)$ with the following property: for any $K\subset\subset X\setminus D$ there exista a constant $C_K\ge1$ such that for any $t\in[1,\infty)$, 
$$|\omega(t)-f^*\overline\chi|_{C^0(K,\omega_0)}\le C_Ke^{-\varepsilon_0 t};$$
\item[(2.3)] if additionally $dim(Y)=1$, then $(X,d_t)\to(Y,\overline d)$ in Gromov-Hausdorff topology, here $(Y,\overline d)$ is the metric completion of $(Y\setminus D',\overline\chi)$.
\end{itemize} 
\end{itemize}
\end{thm}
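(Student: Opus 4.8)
The plan is to reduce the twisted conical Kähler–Ricci flow \eqref{TCKRF} to a parabolic complex Monge–Ampère equation at the level of potentials, following the standard substitution used for the (twisted) Kähler–Ricci flow but carried out directly on $X\setminus D$ with the conical reference metric $\omega_0^*$. Writing $\omega(t)=\omega_\infty+e^{-t}(\omega_0^*-\omega_\infty)+\sqrt{-1}\partial\bar\partial\varphi(t)$, where $\omega_\infty$ is a fixed closed semipositive form cohomologous to $f^*\chi+(1-\beta)2\pi[D]$ arising from \eqref{semiample}, the flow becomes $\partial_t\varphi=\log\frac{(\text{reference}+\sqrt{-1}\partial\bar\partial\varphi)^n}{\Omega}-\varphi$ for an explicit volume form $\Omega$ with conical (log-type) singularities along $D$. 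The collapsing nature of $\omega_\infty=f^*\chi^*$ forces the right normalization: one expects $\omega(t)\to f^*\overline\chi$ where $\overline\chi$ solves a twisted conical Monge–Ampère equation on $Y$ of the type treated in the existence theory for conical Kähler–Einstein/twisted metrics. First I would establish the a priori estimates along \eqref{TCKRF}: a uniform $C^0$ bound on $\varphi$ (via maximum principle using the exponential damping term $-\varphi$, exactly as in the $T_{max}<\infty$ Kähler–Ricci flow of Song–Tian), then the partial second-order estimate $\omega(t)\ge c\,f^*\chi$ and the collapsing upper bound $\omega(t)\le C f^*\chi + Ce^{-\delta t}\omega_0^*$ on compact subsets of $X\setminus D$, by a Schwarz-lemma/parabolic-Aubin–Yau computation combined with the scalar-curvature bound philosophy of Theorem \ref{scal_thm}. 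A subtle point throughout is that all maximum-principle arguments must be justified on the open manifold $X\setminus D$; here I would either use the barrier $|S|^{2\beta}_h$ to push maxima away from $D$, or invoke the weak/viscosity formulation from subsection \ref{reduction}.

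For part (1), the uniform diameter bound, I would combine the two-sided metric estimate $\omega(t)\sim f^*\chi$ on $X\setminus D$ (up to exponentially small error) with the fact that $(Y,\chi^*)$, being a fixed conical Kähler metric on a compact manifold, has finite diameter; a path in $X$ between two points projects to a path in $Y$ of controlled length, and the fiber directions contribute length $\lesssim e^{-\delta t}\cdot(\text{fiber diameter in }\omega_0^*)$ which is uniformly bounded. The only care needed is that the metric completion $(X,d_t)$ does not create extra distance across $D$; since $D=f^*D'$ and the conical angle along $D'$ is $2\pi\beta$ with $\beta\in(0,1)$, the completion is the usual one and the estimate survives.

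For part (2), I would first extract the limit: the $C^0$ bound on $\varphi$ plus the flow equation give, after integrating in $t$, convergence of $\varphi(t)$ (along a subsequence, then for all $t$ by the exponential contraction coming from $-\varphi$) to a function $\varphi_\infty$ pulled back from $Y$, solving the limiting twisted conical Monge–Ampère equation on $Y$; this produces $\overline\chi=\chi^*+\sqrt{-1}\partial\bar\partial\varphi_\infty$, a conical Kähler metric on $Y$ with angle $2\pi\beta$ along $D'$, giving (2.1) (weak convergence of currents follows from $L^1$ convergence of potentials). For (2.2) I would upgrade to exponential $C^0_{loc}$ convergence of $\omega(t)$ on compacta of $X\setminus D$ by a standard parabolic bootstrap: once $\|\varphi(t)-\varphi_\infty\|_{C^0}\le Ce^{-t}$ and local higher-order estimates are in place (interior Schauder estimates for the parabolic Monge–Ampère equation on a fixed coordinate ball away from $D$, where the equation is uniformly parabolic with uniformly bounded geometry), interpolation yields the claimed bound with some $\varepsilon_0\in(0,1)$. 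Finally (2.3), the Gromov–Hausdorff convergence when $\dim Y=1$: here $Y$ is a curve and $\overline\chi$ is a genuine conical metric on it, so $(Y,\overline d)$ is a compact length space; I would build explicit $\varepsilon$-approximations by collapsing fibers, using the fiber-diameter bound $\to 0$ and the $C^0_{loc}$ convergence of $\omega(t)$ to $f^*\overline\chi$ away from $D'$, together with a uniform control near the finitely many points over $D'$ obtained from the model conical metric comparison. I expect the main obstacle to be item (2.2)/(2.3) near the divisor: obtaining uniform (in $t$) local regularity and metric control of $\omega(t)$ in a neighborhood of $D$, since there the reference geometry is only conical and one cannot use the smooth-approximation scheme; this is precisely where I would need to argue directly on the conical equation, as advertised in the abstract, using the model metric $\omega_0^*$ (equivalently $|S|^{2\beta}_h$) as a barrier and the instantaneous scalar-curvature bound of Theorem \ref{scal_thm} to control the volume form ratio.
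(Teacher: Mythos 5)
Your overall architecture matches the paper's: reduction to a parabolic Monge--Amp\`ere equation for the potential, $C^0$ and $\partial_t\varphi$ bounds by maximum principle with the barrier $\lambda\log|S|^2_h$, exponential convergence of potentials to the solution $\psi$ of a conical Monge--Amp\`ere equation on $Y$ (giving (2.1)), a TWY-type upgrade to local $C^0$ convergence for (2.2), and a fiber-collapsing construction of $\varepsilon$-approximations for (2.3). However, there is a genuine gap in your treatment of (1) and (2.3).

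For the diameter bound you need a metric estimate that holds \emph{uniformly up to the divisor} $D$, not merely ``on compact subsets of $X\setminus D$'' as you state. The paper's Proposition \ref{prop2} gives $C^{-1}(e^{-t}\omega_0+f^*\chi^*)\le\omega(t)\le C(e^{-t}\omega_0+f^*\chi^*)$ on all of $(X\setminus D)\times[0,\infty)$, whence $\omega(t)\le C\omega_0^*$ and the diameter bound is immediate from the finite diameter of the model conical metric. The engine behind this is the key Lemma \ref{C2.1}: a parabolic Schwarz lemma giving $tr_{\omega(t)}f^*\chi^*\le C$ globally on $X\setminus D$, where the target is the \emph{conical} reference metric $\chi^*$ on $Y$, not the smooth $\chi$. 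This step requires as input an upper bound for the bisectional curvature of $\chi^*$ on $Y\setminus D'$ (from Jeffres--Mazzeo--Rubinstein), and it is precisely the step the author flags as not accessible by smooth approximation (Remark \ref{rem_key}) and as the reason the divisor is assumed irreducible. Your sketch replaces this by a Schwarz lemma against the smooth $f^*\chi$ plus a barrier; that controls $tr_{\omega(t)}f^*\chi$ but degenerates near $D$ relative to $\chi^*$, so it neither yields the upper bound $\omega(t)\le C\omega_0^*$ up to $D$ nor the control of the geometry of the cone divisor fiber $X_o$ that Lemma \ref{gh0} needs for the Gromov--Hausdorff statement. Your closing remark that ``the completion is the usual one and the estimate survives'' is exactly the point that must be proved, and the missing ingredient is this global conical trace bound. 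Two smaller remarks: the scalar curvature bound is not used for the second-order estimates (it enters only through the decay of $\partial_t\varphi$ in the proof of (2.2)), and your claimed lower bound $\omega(t)\ge c\,f^*\chi$ should really be $\omega(t)\ge c\,f^*\chi^*$ for the argument near $D$ to close.
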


In our Theorem \ref{main_thm}, we have focused on the volume collapsing case, i.e. $dim(X)>dim(Y)$. For the volume noncollapsing case, we have convergence results on the conical K\"ahler-Ricci flow of Chen-Wang \cite{CW2} when twisted canonical line bundle $K_X+(1-\beta)L$ is trivial or ample and Shen \cite{Sh} when $K_X+(1-\beta)L$ is nef and big (also see Liu-X. Zhang \cite{LZ} on the conical K\"ahler-Ricci flow on Fano manifolds). Items (2.1) and (2.2) in Theorem \ref{main_thm} can be seen as generalizations of Chen-Wang \cite{CW2} and Shen \cite{Sh} to the volume collapsing case. Of course, Theorem \ref{main_thm} are also generalizations of results on the K\"ahler-Ricci flow \cite{ST07,ST12,FZ,TWY} to the conical setting.

\par In the case $T_{max}=\infty$ in \eqref{TCKRF}, the conical K\"ahler metric $\overline\chi$ on $Y$ given in Theorem \ref{main_thm} (2) will satisfy a generalized conical K\"ahler-Einstein equation (see Remark \ref{ke}).
\par In the case $\beta=1$ (i.e. we remove the conical singularity) and $T_{max}<\infty$, the equation \eqref{TCKRF} is essentially introduced (in a slightly different form) in \cite{Zo1} and that corresponding convergence result is even new for this smooth setting, solving some special case of \cite[Conjecture 3.1]{Zo1}.
\par We remark that items (2.1) and (2.2) of Theorem \ref{main_thm} also hold when $D'$ (and hence $D$) is a simple normal crossing divisor as in subsection \ref{general}. However, our arguments for item (1) in Theorem \ref{main_thm} involve the upper bound for bisectional curvature of the model conical metric $\omega_0^*$, which seems only known when $D'$ is of only one irreducible component; we can extend Theorem \ref{main_thm} (1) to simple normal crossing divisor case, provided an upper bound for bisectional curvature for the simple normal crossing case is available.\\

\par Let's also take a look at Theorem \ref{main_thm} (2.3). If $dim(Y)=1$, we know the hypersurface $D'$ on $Y$ is just a single point, say $o\in Y$, and $D=f^*D'$ is a smooth fiber $X_o$ on $X$. Then Theorem \ref{main_thm} (2.3) means that the twisted conical K\"ahler-Ricci flow \eqref{TCKRF} collapses not only the gereric fiber $X_y:=f^{-1}(y)$ for $y\in Y\setminus\{o\}$, but also the cone divisor $X_o$ to a point on the base $Y$, in Gromov-Hausdorff topology. We hope that the additional condition $dim(Y)=1$ can be removed in the future work.

\par We point out that the first result that the conical K\"ahler-Ricci flow will collapse the cone divisor to a point in Gromov-Hausdorff topology was recently obtained by Edwards in \cite{Ed17} on Hirzebruch surfaces (in which Edwards also proved that, in some finite-time noncollapsing case, the conical K\"ahler-Ricci flow can contract cone divisor to a point in Gromov-Hausdorff topology), under certain symmetry condition on the initial model conical metric. Our Theorem \ref{main_thm} (2.3) provides another result on such phenomenon. Note that our result doesn't need any symmetry condition.

\subsection{Organization of this paper} \emph{In the remaining part of this paper, we will always be in the setup introduced in subsection \ref{diam}}. As we will pointed out later (see subsection \ref{scal.1}), a proof for Theorem \ref{scal_thm} is essentially contained in the discussions for setup introduced in subsection \ref{diam}. A more precise organization can be found as follows:
\begin{itemize}
\item In Section \ref{weak_conv}, by using direct arguments on the conical equations, we prove item (2.1) in Theorem \ref{main_thm}. In fact, we will show in Proposition \ref{prop_conv} that the convergence takes place exponentially fast at the level of K\"ahler potentials.
\item In Section \ref{equiv}, by using direct arguments on the conical equations, we prove in Proposition \ref{prop2} that the solution is uniformly equivalent to a family of collapsing conical K\"ahler metric, which will immediately imply item (1) in Theorem \ref{main_thm}. In the proof of Proposition \ref{prop2}, a key step is carried out in Lemma \ref{C2.1}, which seems can \emph{not} be proved by passing to a smooth approximation, see Remark \ref{rem_key} for more comments.
\item In Section \ref{sect_scal}, we show in Lemma \ref{scal} that the twisted scalar curvature along the twisted conical K\"ahler-Ricci flow \eqref{TCKRF} with the assumption \eqref{semiample} has a bound of the from $\frac{C}{\min\{t,1\}}$. Compare with the uniform scalar bound obtained in \cite{Ed15}, our arguments don't rely on some nontrivial properties of a smooth approximation for the initial model K\"ahler metric (see Remark \ref{rem_reasons} for more comments). We point out in subsection \ref{scal.1} that arguments for Lemma \ref{scal} can be applied to prove Theorem \ref{scal_thm}. The arguments in this Section  \ref{sect_scal} need to use a smooth approximation for the twisted conical K\"ahler-Ricci flow. 
\item In Section \ref{sect_C0}, we use direct arguments and the strategy in \cite{TWY} to prove item (2.2) in Theorem \ref{main_thm}. 
\item In Section \ref{sect_gh}, we prove Gromov-Hausdorff convergence stated in item (2.3) in Theorem \ref{main_thm}, in which we will make use of Proposition \ref{prop2} crucially.
\item In Section \ref{sect_rem}, we present several remarks on a twisted K\"ahler-Ricci flow and its convergence.
\end{itemize}

\section{The weak convergence}\label{weak_conv}
In this section, we shall give a proof for item (2.1) in Theorem \ref{main_thm}. 
\subsection{Limiting metric $\overline\chi$}\label{subs_limit}
We first need to construct the limiting metric $\overline\chi$ on $Y$. The construction is essentially the same as in \cite{ST07,ST12} and its generalization in \cite{ZyZz2}. Let's recall some details. We first fix a smooth real function $\rho$ on $X$ such that 
\begin{equation}\label{SRF}
\overline\omega_0:=\omega_0+\sqrt{-1}\partial\bar\partial\rho
\end{equation}
on $X$ is a closed real $(1,1)$-form and  $\overline\omega_0|_{X_y}$ is the unique K\"ahler metric on $X_y$ in class $[\omega_0|_{X_y}]$ with $Ric(\overline\omega_0|_{X_y})=\frac{1}{T_{max}}\omega_0|_{X_y}$ for every $y\in Y$. Secondly we fix a smooth positive volume form $\Omega$ on $X$ with $\sqrt{-1}\partial\bar\partial\log\Omega=f^*\chi-\frac{1}{T_{max}}\omega_0-(1-\beta)2\pi f^*R_{h'}$.
Then define a positive smooth function $G:=\frac{\Omega}{C^{n}_{k}\overline\omega_0^{n-k}\wedge f^*\chi^k}$ on $X$, which descends to a smooth function on $Y$. Set $\overline\chi:=\chi+\sqrt{-1}\partial\bar\partial\psi$ be the unique solution to the following equation:

\begin{equation}\label{limit}
(\chi+\sqrt{-1}\partial\partial\psi)^k=\frac{e^\psi G}{|S'|_{h'}^{2(1-\beta)}}\chi^k.
\end{equation}

We know $\psi\in L^\infty(Y)\cap C^\infty(Y\setminus D')$ \cite{Y,Ko98} and $\overline\chi$ is a conical K\"ahler metric on $Y$ with cone angle $2\pi\beta$ along $D'$, see \cite{CGP,GP}.
We will see $\overline\chi$ is the limit of the twisted conical K\"ahler-Ricci flow \eqref{TCKRF}. 

\begin{rem}\label{ke}
If $T_{max}=\infty$, then $\overline\chi$ constructed above satisfies a generalized conical K\"ahler-Einstein equation (see \cite{ST07}):
$$Ric(\overline\chi)=-\overline\chi+\omega_{WP}+2\pi(1-\beta)[D],$$ 
where $\omega_{WP}$ is the Weil-Petersson form on $Y$.
\end{rem}

\subsection{Reduction to a parabolic complex Monge-Amp\`ere equation}\label{reduction}
Set $\omega_t:=e^{-t}\omega_0+(1-e^{-t})f^*\chi$ and let $\omega(t)=\omega_t+\delta\sqrt{-1}\partial\bar\partial|S|^{2\beta}_h+\sqrt{-1}\partial\bar\partial\varphi(t)$. It is well-known that \eqref{TCKRF} can be reduced to the following 

\begin{equation}\label{TCCMA}
\left\{
\begin{aligned}
\partial_t\varphi(t)&=\log\frac{e^{(n-k)t}(\omega_t+\delta\sqrt{-1}\partial\bar\partial|S|^{2\beta}_h+\sqrt{-1}\partial\bar\partial\varphi(t))^n}{|S|^{-2(1-\beta)}_h\Omega}-\varphi(t)-\delta|S|^{2\beta}_h\\
\varphi(0)&=0,
\end{aligned}
\right.
\end{equation}
With the assumption \eqref{semiample}, by \cite{CW1,CW2,LZ,Sh,Wy} we know \eqref{TCCMA} admits a unique solution $\varphi(t)$ for $t\in[0,\infty)$, where $\varphi(t)\in L^\infty(X)\cap C^\infty((X\setminus D)\times[0,\infty))$ (in fact $\varphi(t)$ is H\"older continuous on $X$ with respect to $\omega_0$ for every $t\ge0$) and $\varphi(t)$ satisfies \eqref{TCCMA} in current sense on $X\times[0,\infty)$ and in smooth sense on $(X\setminus D)\times[0,\infty)$; moreover, for any $T<\infty$ one can find a constant $C\ge1$ such that
\begin{equation}\label{weak_equiv}
C^{-1}\omega_0^*\le\omega(t)\le C\omega_0^*
\end{equation}
holds on $(X\setminus D)\times[0,T]$. 

\par We need some estimates along \eqref{TCKRF} and \eqref{TCCMA}. We point out that most estimates in this paper will be obtained by working with the conical equations \eqref{TCKRF} and \eqref{TCCMA} directly. Such direct arguments have been also used in \cite[Lemmas 2.1, 2.2]{LZ3}. Therefore, we will also provide direct arguments for some existing estimates (see in particular \cite{Ed15}) which have been carried out by passing to a smooth approximation.

\subsection{Uniform bound for $\varphi$ and $\partial_t\varphi$}
We begin with the $C^0$-estimate. In the remaining part of this paper, we assume without loss of any generality that 
\begin{equation}\label{ineq0}
|S'|^2_{h'}\le1.
\end{equation}
Moreover, we fix a small positive constant $\lambda_0\in(0,1]$ such that for any $\lambda\in[-\lambda_0,\lambda_0]$ there holds on $Y\setminus D'$ that
\begin{equation}\label{ineq1}
\frac{1}{2}\chi^*\le\chi^*+\lambda R_{h'}\le2\chi^*,
\end{equation}

\begin{equation}\label{ineq1.1}
\frac{1}{2}\chi\le\chi+\lambda R_{h'}\le2\chi,
\end{equation}
and on $X$ that
\begin{equation}\label{ineq2.1}
\frac{1}{2}\omega_0\le\omega_0+\lambda f^*R_{h'}\le2\omega_0.
\end{equation}

We also fix a sufficiently large constant $T_0\ge1$ such that for any $t\ge T_0$ there holds on $Y\setminus D'$ that
\begin{equation}\label{ineq3}
\frac{1}{2}\chi^*-e^{-t}\chi\ge\frac{1}{3}\chi^*.
\end{equation}

\begin{lem}\label{C0}\cite{Ed15}
There exists a constant $C\ge1$ such that 
\begin{equation}
\sup_{(X\setminus D)\times[0,\infty)}|\varphi|\le C.
\end{equation}
\end{lem}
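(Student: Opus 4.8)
The plan is to establish the uniform $L^\infty$ bound for $\varphi$ by the standard maximum principle argument for parabolic complex Monge-Amp\`ere equations, adapted to the conical/collapsing setting, working directly on the conical equation \eqref{TCCMA}. The first observation is that, although $\varphi(t)$ is only $L^\infty(X)\cap C^\infty((X\setminus D)\times[0,\infty))$, the estimate \eqref{weak_equiv} on each finite time interval together with the asymptotics of $|S|_h^{2\beta}$ near $D$ guarantees that $\varphi(t)$ extends continuously to $X$ and that, for each fixed $T<\infty$, the quantities $\sup_{X\times[0,T]}\varphi$ and $\inf_{X\times[0,T]}\varphi$ are attained; moreover near $D$ the term $\delta|S|_h^{2\beta}$ and the cone behavior of $\omega(t)$ force the spatial maximum/minimum to behave well, so one may legitimately apply the maximum principle on $X\times[0,T]$ and then let $T\to\infty$ after obtaining a $T$-independent bound.

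For the upper bound, I would consider $H(t):=\sup_{X\setminus D}\varphi(t)$ (a Lipschitz function of $t$) and, at a point realizing the spatial maximum in $X\setminus D$, use $\sqrt{-1}\partial\bar\partial\varphi\le0$ there; since $\omega_t+\delta\sqrt{-1}\partial\bar\partial|S|_h^{2\beta}\le\omega_0^*+f^*\chi$ is bounded above by a fixed conical metric $\hat\omega$, we get $(\omega_t+\delta\sqrt{-1}\partial\bar\partial|S|_h^{2\beta}+\sqrt{-1}\partial\bar\partial\varphi)^n\le\hat\omega^n\le C\,|S|_h^{-2(1-\beta)}\Omega$ at that point (the model conical metric has volume comparable to $|S|_h^{-2(1-\beta)}$ times a smooth form). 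Plugging into \eqref{TCCMA} and using $\delta|S|_h^{2\beta}\ge0$ yields $\partial_tH\le (n-k)t+C-H$ in the barrier sense; wait — the factor $e^{(n-k)t}$ inside the log contributes a linear-in-$t$ term $(n-k)t$, which is not bounded, so a naive comparison with $C e^{-t}$ fails. The resolution, which I expect to be the key point, is that the genuine upper barrier must be built from the collapsing structure: one compares instead with a function absorbing the $e^{(n-k)t}$ growth via the degeneration $\omega_t\to f^*\chi$, i.e. one uses that $(\omega_t+\cdots+\sqrt{-1}\partial\bar\partial\varphi)^n$ is actually $O(e^{-(n-k)t})$ because $n-k$ of the directions collapse (this is exactly where the submersion structure and the Calabi-Yau/Fano fiber enter, via the fiberwise normalization $\overline\omega_0$ from \eqref{SRF} and the volume form $\Omega$). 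Concretely, rewriting $\partial_t\varphi=\log\frac{e^{(n-k)t}\omega(t)^n}{|S|_h^{-2(1-\beta)}\Omega}-\varphi-\delta|S|_h^{2\beta}$ and noting $e^{(n-k)t}\omega(t)^n$ stays bounded above and below on compact subsets of $X\setminus D$ by the $C^0$-type behavior of the collapsing metrics, one obtains at the spatial maximum $\partial_tH\le C-H$, hence $H(t)\le\max\{H(0),C\}=\max\{0,C\}$, giving the upper bound.

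For the lower bound one argues symmetrically: at a spatial minimum $\sqrt{-1}\partial\bar\partial\varphi\ge0$, so $(\omega_t+\delta\sqrt{-1}\partial\bar\partial|S|_h^{2\beta}+\sqrt{-1}\partial\bar\partial\varphi)^n\ge(\omega_t+\delta\sqrt{-1}\partial\bar\partial|S|_h^{2\beta})^n$, and one needs the lower bound $e^{(n-k)t}(\omega_t+\delta\sqrt{-1}\partial\bar\partial|S|_h^{2\beta})^n\ge c\,|S|_h^{-2(1-\beta)}\Omega$ on $X\setminus D$; this is precisely the statement that the collapsing model conical metric $\omega_t^*:=\omega_t+\delta\sqrt{-1}\partial\bar\partial|S|_h^{2\beta}$ has, after the $e^{(n-k)t}$ rescaling, volume bounded below by a fixed multiple of the conical volume form, which follows from \eqref{semiample} (so that $f^*\chi$ is a genuine Kähler class on the base, making the mixed term $\overline\omega_0^{n-k}\wedge f^*\chi^k$ strictly positive) together with the definition of $\Omega$ and $G>0$ in subsection \ref{subs_limit}. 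Combined with $\delta|S|_h^{2\beta}\le C$ and $|S'|_{h'}^2\le1$, the maximum principle gives $\partial_t(\inf_{X\setminus D}\varphi)\ge-C-\inf_{X\setminus D}\varphi$, hence a uniform lower bound. The main obstacle, as indicated, is handling the linearly growing factor $e^{(n-k)t}$: one must not treat it as a nuisance term but rather pair it with the collapsing of the metric $\omega_t$ in the fiber directions, so that the combination $e^{(n-k)t}\omega(t)^n / (|S|_h^{-2(1-\beta)}\Omega)$ is two-sided bounded — this is where the hypothesis \eqref{semiample} and the holomorphic submersion structure are used essentially, and where care with the behavior near $D$ (to justify the maximum principle on the noncompact $X\setminus D$) is needed.
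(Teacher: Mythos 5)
Your overall strategy --- maximum principle applied directly to \eqref{TCCMA}, with the $e^{(n-k)t}$ factor cancelled against the $e^{-(n-k)t}$ decay of the reference volume coming from the collapsing (i.e.\ from $(f^*\chi^*)^{j}=0$ for $j>k$) --- is the same as the paper's, and you correctly identify this cancellation as the heart of the matter. But there are two genuine gaps. The first is the localization of the extremum away from $D$. You assert that ``the cone behavior of $\omega(t)$ forces the spatial maximum/minimum to behave well, so one may legitimately apply the maximum principle,'' and you flag at the end that care near $D$ is needed, but you never supply the mechanism. Since $\varphi$ is only H\"older on $X$ and smooth on $X\setminus D$, its supremum may a priori be attained on $D$, where $\sqrt{-1}\partial\bar\partial\varphi\le0$ has no meaning and $\omega(t)$ is singular. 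The paper's entire ``direct argument'' rests on the barrier perturbation $\varphi_\lambda:=\varphi+\lambda\log|S|^2_h$ (resp.\ $\varphi^\lambda:=\varphi-\lambda\log|S|^2_h$ for the minimum), which tends to $-\infty$ (resp.\ $+\infty$) on $D$ and thus forces the extremum into $X\setminus D$, at the cost of shifting the reference form by $\pm\lambda f^*R_{h'}$, controlled by \eqref{ineq1}--\eqref{ineq2.1}; one lets $\lambda\to0$ at the end. Without this (or an equivalent device) your maximum-principle step is unjustified.

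The second gap is a circularity in your upper bound. After correctly observing that comparison with a fixed conical metric $\hat\omega$ leaves an unabsorbed $(n-k)t$, you propose to conclude by ``noting $e^{(n-k)t}\omega(t)^n$ stays bounded above and below\dots by the $C^0$-type behavior of the collapsing metrics.'' A two-sided bound on $e^{(n-k)t}\omega(t)^n$ is equivalent to a bound on $\partial_t\varphi+\varphi$, hence is a consequence of (indeed stronger than) the lemma being proved; in this paper such bounds only become available through Lemma \ref{C00} and Proposition \ref{prop2}, both of which rely on Lemma \ref{C0}. The correct move --- which you do carry out on the lower-bound side --- is to use $\sqrt{-1}\partial\bar\partial\varphi\le0$ at the maximum to replace $\omega(t)^n$ by the reference volume and then compute, as in \eqref{ineq4}, that $(\omega_t+\delta\sqrt{-1}\partial\bar\partial|S|^{2\beta}_h+\lambda f^*R_{h'})^n\le(e^{-t}\omega_0+2f^*\chi^*)^n\le C|S|^{-2(1-\beta)}_h(e^{-(n-k)t}+\cdots+e^{-nt})$, an elementary estimate on fixed forms requiring no a priori input from the flow. (For the lower bound you should also address the negative term $-e^{-t}f^*\chi$ in $\omega_t+\delta\sqrt{-1}\partial\bar\partial|S|^{2\beta}_h=e^{-t}\omega_0+f^*\chi^*-e^{-t}f^*\chi$; the paper handles it via \eqref{ineq3} for $t\ge T_0$ and uses the known finite-time bounds for $t\le T_0$.)
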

\begin{proof}
This result has been obtained in \cite{Ed15} by using a smooth approximation. We now provide a direct argument without passing to a smooth approximation. Our arguments follow closely the arguments for smooth K\"ahler-Ricci flow in \cite{ST07,FZ}.

We now use an auxiliary function as in \cite[Lemmas 2.1, 2.2]{LZ3}: for any given $\lambda\in(0,\lambda_0]$, set $\varphi_\lambda:=\varphi+\lambda\log|S|^{2}_h$. We have
\begin{equation}\label{C0.1}
\partial_t{\varphi_\lambda}=\log\frac{e^{(n-k)t}(\omega_t+\delta\sqrt{-1}\partial\bar\partial|S|^{2\beta}_h+\lambda f^*R_{h'}+\sqrt{-1}\partial\bar\partial\varphi_\lambda)^n}{|S|^{-2(1-\beta)}_h\Omega}-\varphi-\delta |S|^{2\beta}_h
\end{equation}
on $X\setminus D$.

For any $T<\infty$, we know $\varphi$ is uniformly bounded on $[0,T]$ (see \cite{Sh,Wy}). Therefore, the maximal value of $\varphi_\lambda$ on $X\times[0,T]$ must achieved at some $(x_T,t_T)$ with $x_T\in X\setminus D$. If $t_T=0$, then $\varphi_\lambda(x_T,t_T)$ is bounded from above by a constant independent of $T$; otherwise $t_T>0$, we can apply maximum principle to \eqref{C0.1} at $(x_T,t_T)$ to see that
\begin{align}\label{ineq3.5}
\varphi(x_T,t_T)\le\log\frac{e^{(n-k)t}(\omega_t+\delta\sqrt{-1}\partial\bar\partial|S|^{2\beta}_h+\lambda f^*R_{h'})^n}{|S|^{-2(1-\beta)}_h\Omega}(x_T,t_T).
\end{align}
Using \eqref{ineq1} gives
\begin{align}
\omega_t+\delta\sqrt{-1}\partial\bar\partial|S|^{2\beta}_h+\lambda f^*R_{h'}&=e^{-t}\omega_0+f^*(\chi^*+\lambda R_{h'})-e^{-t}f^*\chi\nonumber\\
&\le e^{-t}\omega_0+2f^*\chi^*.
\end{align}
and hence
\begin{align}\label{ineq4}
&(\omega_t+\delta\sqrt{-1}\partial\bar\partial|S|^{2\beta}_h+\lambda f^*R_{h'})^n\nonumber\\
&\le(e^{-t}\omega_0+2f^*\chi^*)^n\nonumber\\
&\le C'|S|^{-2(1-\beta)}_h(e^{-(n-k)t}+e^{-(n-k+1)t}+\ldots+e^{-nt})
\end{align}
for some uniform constant $C'\ge1$. Plugging \eqref{ineq4} into \eqref{ineq3.5} implies
$$\varphi(x_T,t_T)\le C''$$
for some uniform constant $C''$ independent on $T$ and $\lambda$. In conclusion, we can choose a constant $C\ge1$ independent on $T\in(0,\infty)$ and $\lambda\in(0,\lambda_0]$ such that
\begin{equation}\label{ineq5}
\sup_{(X\setminus D)\times [0,T]} \varphi_\lambda\le C.
\end{equation}
Let $T\to\infty$ and $\lambda\to0$ in \eqref{ineq5}, we get the desired upper bound for $\varphi$:
\begin{equation}
\sup_{(X\setminus D)\times [0,\infty)} \varphi\le C.
\end{equation}
Now let's look at the lower bound. For any $\lambda\in(0,\lambda]$, set $\varphi^\lambda:=\varphi-\lambda\log|S|^2_h$. Similarly, we have

\begin{equation}\label{C0.2}
\partial_t{\varphi^\lambda}=\log\frac{e^{(n-k)t}(\omega_t+\delta\sqrt{-1}\partial\bar\partial|S|^{2\beta}_h-\lambda f^*R_{h'}+\sqrt{-1}\partial\bar\partial\varphi^\lambda)^n}{|S|^{-2(1-\beta)}_h\Omega}-\varphi-\delta |S|^{2\beta}_h
\end{equation}
on $X\setminus D$.

For any $T<\infty$, we know the minimal value of $\varphi^\lambda$ on $X\times[0,T]$ must be achieved at some $(x_T',t_T')$ with $x_T'\in X\setminus D$. If $t_T'\in[0,T_0]$, then $\varphi_\lambda(x_T',t_T')$ is bounded from below by a constant independent on $T$; otherwise $t_T'>T_0$, we can apply maximum principle to \eqref{C0.2} at $(x_T',t_T')$ to see that
\begin{align}\label{ineq3.6}
\varphi(x_T',t_T')\ge\log\frac{e^{(n-k)t}(\omega_t+\delta\sqrt{-1}\partial\bar\partial|S|^{2\beta}_h-\lambda f^*R_{h'})^n}{|S|^{-2(1-\beta)}_h\Omega}(x_T',t_T')-1.
\end{align}
Using \eqref{ineq1} and \eqref{ineq3} gives
\begin{align}
\omega_t+\delta\sqrt{-1}\partial\bar\partial|S|^{2\beta}_h-\lambda f^*R_{h'}&=e^{-t}\omega_0+f^*(\chi^*-\lambda R_{h'})-e^{-t}f^*\chi\nonumber\\
&\ge e^{-t}\omega_0+\frac{1}{2}f^*\chi^*-e^{-t}f^*\chi\nonumber\\
&\ge e^{-t}\omega_0+\frac{1}{3}f^*\chi^*.
\end{align}
and hence

\begin{align}
(\omega_t+\delta\sqrt{-1}\partial\bar\partial|S|^{2\beta}_h-\lambda f^*R_{h'})^n\ge C^{-1}|S|^{-2(1-\beta)}_he^{-(n-k)t}\Omega
\end{align}
for some uniform constant $C\ge1$. Therefore, we find

\begin{align}
\varphi^\lambda(x_T',t_T')\ge-C
\end{align}
for some constant $C\ge1$ independent on $T\in[0,\infty)$ and $\lambda\in(0,\lambda_0]$. Then as before, by letting $T\to\infty$ and $\lambda\to0$ we conclude

\begin{equation}
\inf_{(X\setminus D)\times[0,\infty)}\varphi\ge-C
\end{equation}
foe some constant $C\ge1$.
\par Lemma \ref{C0} is proved.
\end{proof}

\begin{lem}\label{C00}\cite{Ed15}
There exists a constant $C\ge1$ such that 
\begin{equation}
\sup_{(X\setminus D)\times[0,\infty)}|\partial_t\varphi|\le C.
\end{equation}
\end{lem}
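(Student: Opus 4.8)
The plan is to follow the standard computation of a uniform bound for the time-derivative of the K\"ahler potential along a normalized K\"ahler--Ricci flow, as in \cite{ST07,FZ}, but to carry it out directly on the conical equation \eqref{TCCMA}, using the localizing device $\lambda\log|S|^2_h$ already exploited in Lemma \ref{C0}. Set $\Theta:=\partial_t\varphi+\varphi+\delta|S|^{2\beta}_h$, so that \eqref{TCCMA} reads
$$\Theta=\log\frac{e^{(n-k)t}\,\omega(t)^n}{|S|^{-2(1-\beta)}_h\,\Omega}\qquad\text{on }X\setminus D.$$
Since $|\varphi|\le C$ by Lemma \ref{C0} and $|S|^{2\beta}_h$ is bounded, bounding $\partial_t\varphi$ is equivalent to bounding $\Theta$ from above and below. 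Differentiating \eqref{TCCMA} in $t$ and using $\partial_t\omega_t=f^*\chi-\omega_t$ together with $\omega(t)=\omega_t+\delta\sqrt{-1}\partial\bar\partial|S|^{2\beta}_h+\sqrt{-1}\partial\bar\partial\varphi$, one records, on $(X\setminus D)\times[0,\infty)$,
$$(\partial_t-\Delta_{\omega(t)})\Theta=-k+\mathrm{tr}_{\omega(t)}(f^*\chi),\qquad (\partial_t-\Delta_{\omega(t)})(\varphi+\delta|S|^{2\beta}_h)=\partial_t\varphi-n+\mathrm{tr}_{\omega(t)}\omega_t.$$
By the finite-time theory (see \cite{Sh,Wy}) together with \eqref{weak_equiv}, $\partial_t\varphi$ is already bounded on $X\times[0,T_0]$; hence it suffices to work on $[T_0,\infty)$ with a fixed constant $T_0\ge1$.

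For the upper bound I would run the maximum principle, on $X\times[T_0,T]$, on
$$\Psi_\lambda:=\Theta+A\,(\varphi+\delta|S|^{2\beta}_h)+\lambda\log|S|^2_h,\qquad A:=-\frac{2}{1-e^{-T_0}}<0,\quad\lambda\in(0,\lambda_0].$$
Splitting $\mathrm{tr}_{\omega(t)}\omega_t=e^{-t}\mathrm{tr}_{\omega(t)}\omega_0+(1-e^{-t})\mathrm{tr}_{\omega(t)}(f^*\chi)$ and using $\mathrm{tr}_{\omega(t)}(f^*R_{h'})\le C\,\mathrm{tr}_{\omega(t)}(f^*\chi)$ (a consequence of \eqref{ineq1.1}), the choice of $A$ makes every second-order trace term in $(\partial_t-\Delta_{\omega(t)})\Psi_\lambda$ nonpositive, because $\mathrm{tr}_{\omega(t)}(f^*\chi)\ge0$, $\mathrm{tr}_{\omega(t)}\omega_0\ge0$ and $A<0$ (shrinking $\lambda_0$ if necessary). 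Substituting $\partial_t\varphi=\Psi_\lambda-(1+A)(\varphi+\delta|S|^{2\beta}_h)-\lambda\log|S|^2_h$, invoking Lemma \ref{C0}, and using $\log|S|^2_h\le0$ from \eqref{ineq0}, one reaches
$$(\partial_t-\Delta_{\omega(t)})\Psi_\lambda\le-|A|\,\Psi_\lambda+C\qquad\text{on }(X\setminus D)\times[T_0,\infty).$$
Since $\lambda\log|S|^2_h\to-\infty$ along $D$, the spatial maximum of $\Psi_\lambda$ over $X\times[T_0,T]$ lies in $X\setminus D$; at an interior-in-time maximum the last inequality gives $\Psi_\lambda\le C/|A|$, while on $t=T_0$ one uses the $[0,T_0]$-bound for $\partial_t\varphi$. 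Thus $\sup_{X\times[T_0,T]}\Psi_\lambda\le C'$ uniformly in $T$ and $\lambda$; letting $\lambda\to0$ (so $\lambda\log|S|^2_h\to0$ on $X\setminus D$), then $T\to\infty$, and using Lemma \ref{C0} again, one gets $\partial_t\varphi\le C''$ on $(X\setminus D)\times[0,\infty)$.

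For the lower bound the scheme is the same: one seeks an upper bound for a quantity $-\Theta+B(\varphi+\delta|S|^{2\beta}_h)+\lambda\log|S|^2_h$ with $B\in(0,1)$, which by the same algebra reduces to $(\partial_t-\Delta_{\omega(t)})\Psi\le-B\Psi+C$ \emph{provided} one already controls $e^{-t}\mathrm{tr}_{\omega(t)}\omega_0$ from above (equivalently, a fiberwise lower bound $\omega(t)\ge c\,e^{-t}\omega_0^*$). Indeed, in this combination the term $\mathrm{tr}_{\omega(t)}\omega_t$ appears with an unfavorable sign along the collapsing fibers and cannot be discarded. This auxiliary estimate is exactly the collapsing rate ``from below'' on the fibers; I would obtain it by a Schwarz-lemma / Aubin--Yau type maximum-principle argument comparing $\omega(t)$ with the fiberwise-normalized reference $\overline\omega_0$ of \eqref{SRF} (for which $Ric(\overline\omega_0|_{X_y})=\frac{1}{T_{max}}\omega_0|_{X_y}$), again localized by $\lambda\log|S|^2_h$; alternatively, it is contained in the uniform metric equivalence of Proposition \ref{prop2}. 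Granting it, the maximum-principle argument above yields $\partial_t\varphi\ge-C$ on $(X\setminus D)\times[0,\infty)$.

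The main obstacle is twofold. The conical singularity forbids applying the maximum principle directly on the compact manifold $X$; this is circumvented, as in Lemma \ref{C0}, by the perturbation $\lambda\log|S|^2_h$, which pushes every extremum into $X\setminus D$ while contributing only the benign term $\lambda\,\mathrm{tr}_{\omega(t)}(f^*R_{h'})$ to the evolution. More substantial is that the trace terms $\mathrm{tr}_{\omega(t)}(f^*\chi)$ and $\mathrm{tr}_{\omega(t)}\omega_t$ are a priori unbounded; the entire purpose of the chosen linear combinations is to make them enter with a favorable sign, which works unconditionally for the upper bound but, for the lower bound, only after establishing the fiberwise bound on $e^{-t}\mathrm{tr}_{\omega(t)}\omega_0$.
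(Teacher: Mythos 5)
Your upper bound is fine: the combination $\Theta+A(\varphi+\delta|S|^{2\beta}_h)+\lambda\log|S|^2_h$ with $A<0$ chosen so that $1+A(1-e^{-t})\le -1$ for $t\ge T_0$ does make all trace terms nonpositive, and the rest is the standard maximum-principle bookkeeping. This is a legitimate variant of the paper's upper-bound argument (the paper instead runs the maximum principle on $(e^t-1)\partial_t\varphi-\varphi-\delta|S|^{2\beta}_h-(n-k)e^t-kt+\lambda\log|S|^2_h$, whose heat operator equals $-tr_{\omega(t)}(\omega_0-\lambda f^*R_{h'})<0$, so the maximum sits at $t=0$); both routes work.

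The lower bound, however, has a genuine gap. You reduce it to the a priori estimate $e^{-t}tr_{\omega(t)}\omega_0\le C$ and propose to get that either from Proposition \ref{prop2} or from a fiberwise Schwarz-lemma comparison with $\overline\omega_0$. In this paper that estimate is exactly Lemma \ref{C2.2}, and its proof (as well as that of Lemma \ref{C2.1} and hence Proposition \ref{prop2}) uses the uniform bound on $\partial_t\varphi$ --- the very statement you are proving --- so invoking it here is circular. Nor does a direct parabolic Schwarz lemma rescue you: it produces $(\partial_t-\Delta_{\omega(t)})\log tr_{\omega(t)}(e^{-t}\omega_0)\le B_0\,tr_{\omega(t)}\omega_0+C$, and the unrescaled term $tr_{\omega(t)}\omega_0$ on the right cannot be absorbed without the fiberwise Yau estimate \eqref{ineq12.5}, which again sits downstream of Lemma \ref{C00}. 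The missing idea is that one should \emph{not} discard or separately bound the trace term but exploit it: the paper works with $\partial_t\varphi+2\varphi+\delta|S|^{2\beta}_h-\lambda\log|S|^2_h$, whose evolution contains $tr_{\omega(t)}(\omega_t+\delta\sqrt{-1}\partial\bar\partial|S|^{2\beta}_h+f^*\chi-\lambda f^*R_{h'})\ge tr_{\omega(t)}(e^{-t}\omega_0+\tfrac12 f^*\chi^*)$, and then applies the arithmetic--geometric mean inequality together with the Monge--Amp\`ere equation,
\begin{equation*}
tr_{\omega(t)}\bigl(e^{-t}\omega_0+\tfrac12 f^*\chi^*\bigr)\ \ge\ n\left(\frac{(e^{-t}\omega_0+\tfrac12 f^*\chi^*)^n}{\omega(t)^n}\right)^{\frac1n}\ \ge\ C^{-1}e^{-\frac{\partial_t\varphi}{n}},
\end{equation*}
using $(e^{-t}\omega_0+\tfrac13 f^*\chi^*)^n\ge C^{-1}e^{-(n-k)t}|S|^{-2(1-\beta)}_h\Omega$ and Lemma \ref{C0}. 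At a minimum point this forces $\partial_t\varphi+C^{-1}e^{-\partial_t\varphi/n}-n-k\le0$, which is impossible for $\partial_t\varphi$ very negative, and the lower bound follows with no auxiliary collapsing estimate. You should replace your conditional step by this device.
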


\begin{proof}
This result has been obtained in \cite{Ed15} by using a smooth approximation. Again we present a direct proof without passing to a smooth approximation. By computation in \cite{ST07,FZ}, there holds on $(X\setminus D)\times[0,\infty)$ that
\begin{equation}
(\partial_t-\Delta_{\omega(t)})((e^t-1)\partial_t\varphi-\varphi-\delta|S|^{2\beta}_h)=(n-k)e^t+k-tr_{\omega(t)}\omega_0,
\end{equation}
and so 
\begin{equation}
(\partial_t-\Delta_{\omega(t)})((e^t-1)\partial_t\varphi-\varphi-\delta|S|^{2\beta}_h-(n-k)e^t-kt)=-tr_{\omega(t)}\omega_0.
\end{equation}
As in the last lemma, we now use an auxiliary function similar to \cite[Lemmas 2.1, 2.2]{LZ3}: for any $\lambda\in(0,\lambda_0]$, set $H_\lambda:=(e^t-1)\partial_t\varphi-\varphi-\delta|S|^{2\beta}_h-(n-k)e^t-kt+\lambda\log|S|^2_h$. Then by \eqref{ineq2.1} we have, on $X\setminus D$,

\begin{align}
(\partial_t-\Delta_{\omega(t)})H_\lambda&=-tr_{\omega(t)}(\omega_0-\lambda f^*R_{h'})\nonumber\\
&\le-\frac{1}{2}tr_{\omega(t)}\omega_0<0.
\end{align}

Obviously, for any $T\in(0,\infty)$, since $\partial_t\varphi$ is bounded on $(X\setminus D)\times[0,T]$, the maximal value of $H_\lambda$ on $X\times[0,T]$ must be achieved at some point $(\tilde x_T,0)$ with $\tilde x_T\in X\setminus D$ and so $\sup_{(X\setminus D)\times[0,T]}H_\lambda$ is bounded from above by a positive constant independent on $T\in(0,\infty)$ and $\lambda\in(0,\lambda_0]$. Therefore, we have a positive constant $C\ge1$ such that, on $(X\setminus D)\times[0,\infty)$,

\begin{equation}
(e^t-1)\partial_t\varphi-\varphi-\delta|S|^{2\beta}_h-(n-k)e^t-kt\le C,
\end{equation}
which, combining with the facts that $\partial_t\varphi$ is uniformly bounded on $(X\setminus D)\times[0,1]$ and $\varphi$ is uniformly bounded by Lemma \ref{C0}, implies that $\partial_t\varphi$ is uniformly bounded from above on $(X\setminus D)\times[0,\infty)$.

\par For the lower bound, by a computation similar to \cite{ST07} we have
\begin{equation}
(\partial_t-\Delta_{\omega(t)})(\partial_t\varphi+2\varphi+\delta|S|^{2\beta}_h)=\partial_t\varphi+tr_{\omega(t)}(\omega_t+\delta\sqrt{-1}\partial\bar\partial|S|^{2\beta}_h+f^*\chi)-n-k,   
\end{equation}
and so, for any $\lambda\in(0,\lambda_0]$,
\begin{align}\label{ineq7}
&(\partial_t-\Delta_{\omega(t)})(\partial_t\varphi+2\varphi+\delta|S|^{2\beta}_h-\lambda\log|S|^2_h)\nonumber\\
&=\partial_t\varphi+tr_{\omega(t)}(\omega_t+\delta\sqrt{-1}\partial\bar\partial|S|^{2\beta}_h+f^*\chi-\lambda f^*R_{h'})-n-k,   
\end{align}
on $(X\setminus D)\times[0,\infty)$. By \eqref{ineq1} we have

\begin{align}
\omega_t+\delta\sqrt{-1}\partial\bar\partial|S|^{2\beta}_h+f^*\chi-\lambda f^*R_{h'}&=e^{-t}\omega_0+f^*\chi^*+(1-e^{-t})f^*\chi-\lambda f^*R_{h'}\nonumber\\
&\ge e^{-t}\omega_0+f^*(\chi*-\lambda R_{h'})\nonumber\\
&\ge e^{-t}\omega_0+\frac{1}{2}f^*\chi^*,
\end{align}
which implies
\begin{align}\label{ineq8}
tr_{\omega(t)}(\omega_t+\delta\sqrt{-1}\partial\bar\partial|S|^{2\beta}_h+f^*\chi-\lambda f^*R_{h'})&\ge tr_{\omega(t)}(e^{-t}\omega_0+\frac{1}{2}f^*\chi^*)\nonumber\\
&\ge n\left(\frac{(e^{-t}\omega_0+\frac{1}{2}f^*\chi^*)^n}{\omega(t)^n}\right)^{\frac{1}{n}}\nonumber\\
&\ge n\left(\frac{C^{-1}e^{-(n-k)t}|S|^{-2(1-\beta)}_h\Omega}{e^{-(n-k)t}|S|^{-2(1-\beta)}_he^{\partial_t\varphi
+\varphi+\delta|S|^{2\beta}_h}\Omega}\right)^{\frac{1}{n}}\nonumber\\
&\ge C^{-1}e^{-\frac{\partial_t\varphi}{n}},
\end{align}
for some uniform positive constant $C$ independent on $\lambda\in(0,\lambda_0], $where we have used $\varphi$ is uniformly bounded by Lemma \ref{C0}. Plugging \eqref{ineq8} into \eqref{ineq7} gives

\begin{equation}
(\partial_t-\Delta_{\omega(t)})(\partial_t\varphi+2\varphi+\delta|S|^{2\beta}_h-\lambda\log|S|^2_h)\ge\partial_t\varphi+C^{-1}e^{-\frac{\partial_t\varphi}{n}}-n-k
\end{equation}
on $(X\setminus D)\times[0,\infty)$. Now by maximum principle we can find a constant $C\ge1$ such that for any $\lambda\in(0,\lambda_0]$
$$\inf_{(X\setminus D)\times[0,\infty)}\partial_t\varphi+2\varphi+\delta|S|^{2\beta}_h-\lambda\log|S|^2_h\ge-C.$$
Leting $\lambda\to0$ gives the desired uniform lower bound for $\partial_t\varphi$ on $(X\setminus D)\times[0,\infty)$.
\par Lemma \ref{C00} is proved.
\end{proof}

\subsection{Convergence of K\"ahler potentials} 
We come to the main result of this section.

\begin{prop}\label{prop_conv}
There exists a constant $C\ge1$ such that for any $t\in[0,\infty)$,
\begin{equation}\label{conv_00}
\sup_{X\setminus D}|\varphi(t)+\delta|S|^{2\beta}_h-f^*\psi|\le Ce^{-\frac{3}{4}t}.
\end{equation}
\end{prop}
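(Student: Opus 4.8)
The plan is to run a parabolic maximum principle on a suitably normalized K\"ahler potential, in the spirit of Song--Tian and Fong--Zhang, but working directly with the conical equation \eqref{TCCMA} and handling the divisor $D$ by the auxiliary functions $\lambda\log|S|^2_h$ exactly as in Lemmas \ref{C0} and \ref{C00}.

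First I would record the algebraic identity linking $\Omega$ to the limiting metric $\overline\chi$: combining the definitions of $G$ and $\Omega$ from subsection \ref{subs_limit} with the Monge--Amp\`ere equation \eqref{limit} for $\psi$, and using $S=f^*S'$, $h=f^*h'$, one obtains on $X\setminus D$
\[
|S|^{-2(1-\beta)}_h\Omega=e^{-f^*\psi}\,C^n_k\,\overline\omega_0^{n-k}\wedge f^*\overline\chi^k .
\]
Next, set $\tilde\omega_t:=e^{-t}\overline\omega_0+(1-e^{-t})f^*\overline\chi$ and $u:=\varphi+\delta|S|^{2\beta}_h-e^{-t}\rho-(1-e^{-t})f^*\psi$. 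Using \eqref{SRF} and $f^*\overline\chi=f^*\chi+\sqrt{-1}\partial\bar\partial(f^*\psi)$ one checks that $\omega(t)=\tilde\omega_t+\sqrt{-1}\partial\bar\partial u$, that $u-(\varphi+\delta|S|^{2\beta}_h-f^*\psi)=e^{-t}(\rho-f^*\psi)=O(e^{-t})$, and that \eqref{TCCMA} combined with the identity above yields
\[
(\partial_t+1)u=\log\frac{e^{(n-k)t}(\tilde\omega_t+\sqrt{-1}\partial\bar\partial u)^n}{C^n_k\,\overline\omega_0^{n-k}\wedge f^*\overline\chi^k}=:F .
\]
Hence it suffices to prove $|u|\le Ce^{-\frac34 t}$ on $(X\setminus D)\times[0,\infty)$, the extra $O(e^{-t})$ being harmless.

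The analytic core is a sharp two-sided comparison of the reference volume form $e^{(n-k)t}\tilde\omega_t^n$ with $C^n_k\,\overline\omega_0^{n-k}\wedge f^*\overline\chi^k$. Expanding the binomial, all powers $f^*\overline\chi^m$ with $m>k$ vanish, so $e^{(n-k)t}\tilde\omega_t^n=C^n_k(1-e^{-t})^k\overline\omega_0^{n-k}\wedge f^*\overline\chi^k+\sum_{j>n-k}(\text{coefficient }O(e^{-t}))\,\overline\omega_0^j\wedge f^*\overline\chi^{n-j}$. Since $\overline\chi$ is a conical K\"ahler metric it dominates a smooth K\"ahler metric, $\overline\chi\ge c\chi$ on $Y\setminus D'$; a pointwise computation in coordinates adapted to $f$ then shows that each ratio $(\overline\omega_0^j\wedge f^*\overline\chi^{n-j})/(\overline\omega_0^{n-k}\wedge f^*\overline\chi^k)$, $n-k\le j\le n$, is bounded on all of $X\setminus D$, and that, using the fiberwise positivity of $\overline\omega_0$ (uniform by compactness of $X$) together with $\overline\chi\ge c\chi$, the form $\tilde\omega_t$ is positive on $X\setminus D$ once $t\ge T_0$ for a sufficiently large $T_0$. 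It follows that $e^{(n-k)t}\tilde\omega_t^n=C^n_k(1+O(e^{-t}))\overline\omega_0^{n-k}\wedge f^*\overline\chi^k$ uniformly on $X\setminus D$. Combining this with $\omega(t)>0$ (always), $\tilde\omega_t>0$ for $t\ge T_0$, and the form inequality $-C\lambda f^*\overline\chi\le\lambda f^*R_{h'}\le C\lambda f^*\overline\chi$ (again from $\overline\chi\ge c\chi$), monotonicity of the determinant gives: at a point where $\sqrt{-1}\partial\bar\partial u\le\lambda f^*R_{h'}$ one has $F\le C(\lambda+e^{-t})$, and at a point where $\sqrt{-1}\partial\bar\partial u\ge-\lambda f^*R_{h'}$ and $t\ge T_0$ one has $F\ge-C(\lambda+e^{-t})$.

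Finally I would introduce, for $\lambda\in(0,\lambda_0]$, the functions $u_\lambda:=u+\lambda\log|S|^2_h$ and $u^\lambda:=u-\lambda\log|S|^2_h$, which tend to $-\infty$, respectively $+\infty$, along $D$, so that the spatial maximum of $u_\lambda$ and the spatial minimum of $u^\lambda$ are attained in $X\setminus D$, where $u$ is smooth; note $\sqrt{-1}\partial\bar\partial(\lambda\log|S|^2_h)=-\lambda f^*R_{h'}$ and $\log|S|^2_h\le0$ by \eqref{ineq0}, so the perturbation terms carry the right signs. Comparing $u_\lambda$ with the solution of $h'+h=C(\lambda+e^{-t})$ on $[0,\infty)$, and $u^\lambda$ with the solution of $g'+g=-C(\lambda+e^{-t})$ on $[T_0,\infty)$ — started from the uniformly bounded values at $t=0$, respectively $t=T_0$ (these bounds use Lemma \ref{C0} and $\psi\in L^\infty(Y)$; on $[0,T_0]$, $u$ is simply bounded) — the standard ODE form of the maximum principle yields $u_\lambda\le Cte^{-t}+C(\lambda+e^{-t})$ and $u^\lambda\ge-Cte^{-t}-C(\lambda+e^{-t})$; letting $\lambda\to0$ and using $(1+t)e^{-t}\le Ce^{-\frac34 t}$ gives $|u|\le Ce^{-\frac34 t}$, hence \eqref{conv_00} after restoring the $O(e^{-t})$ term. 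The step I expect to be the main obstacle is exactly the uniform volume-form comparison on all of $X\setminus D$ together with the positivity of $\tilde\omega_t$ for large $t$, the form $\overline\omega_0$ being in general only fiberwise positive on $X$: this is where the conical geometry of $\overline\chi$ (domination of a smooth metric, controlling both the subleading wedge products and the curvature term $R_{h'}$) and the compactness of $X$ must be used, and it is a point that a passage to a smooth approximation would not see directly.
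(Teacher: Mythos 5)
Your argument is correct and is essentially the paper's own proof: the paper sets $V=\varphi+\delta|S|^{2\beta}_h-f^*\psi-e^{-t}\rho$ (differing from your $u$ only by the bounded term $e^{-t}f^*\psi$), derives the same equation $\partial_tV+V=\log\frac{e^{(n-k)t}(e^{-t}\overline\omega_0+f^*\overline\chi-e^{-t}f^*\chi+\sqrt{-1}\partial\bar\partial V)^n}{C^n_k\overline\omega_0^{n-k}\wedge f^*\overline\chi^k}$, perturbs by $\pm e^{-t}\lambda\log|S|^2_h$ (your $\pm\lambda\log|S|^2_h$ works just as well, the extra $C\lambda$ in the ODE vanishing as $\lambda\to0$), uses the same volume-form comparison $\log(1\pm Ce^{-t})$, and likewise splits the lower bound at a time $T_1$ beyond which the reference form is positive. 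The step you flag as delicate is exactly the one the paper also relies on, so there is no gap relative to the paper's level of detail.
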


\begin{proof}
Set $V:=\varphi+\delta|S|^{2\beta}_h-f^*\psi-e^{-t}\rho$, where $\psi$ is the unique solution to \eqref{limit} and $\rho$ is the function in \eqref{SRF}. By \eqref{limit} and \eqref{TCCMA} we have (see \cite{ST07})
\begin{equation}\label{ineq9}
\partial_t V=\log\frac{e^{(n-k)t}(e^{-t}\overline\omega_0+f^*\overline\chi-e^{-t}f^*\chi+\sqrt{-1}\partial\bar\partial V)^n}{C_k^n\overline\omega_0^{n-k}\wedge f^*\overline\chi^k}-V
\end{equation}
on $(X\setminus D)\times[0,\infty)$. For any $\lambda\in(0,\lambda_0]$ we set $V_\lambda:=V+e^{-t}\lambda\log|S|^2_h$. By \eqref{ineq9} one easily has

\begin{equation}\label{ineq10}
\partial_t V_\lambda=\log\frac{e^{(n-k)t}(e^{-t}\overline\omega_0+f^*\overline\chi-e^{-t}f^*(\chi+\lambda R_{h'})+\sqrt{-1}\partial\bar\partial V_\lambda)^n}{C_k^n\overline\omega_0^{n-k}\wedge f^*\overline\chi^k}-V_\lambda
\end{equation}
on $(X\setminus D)\times[0,\infty)$. For any $t\in[0,\infty)$, the maximal value $V_{\lambda,max}(t)$ of $V_\lambda(t)$ must be achieved at some point in $X\setminus D$. Hence, applying the maximum principle in \eqref{ineq10} and then using \eqref{ineq1.1} give

\begin{align}
\partial_tV_{\lambda,max}+V_{\lambda,max}&\le\log\frac{e^{(n-k)t}(e^{-t}\overline\omega_0+f^*\overline\chi-e^{-t}f^*(\chi+\lambda R_{h'}))^n}{C_k^n\overline\omega_0^{n-k}\wedge f^*\overline\chi^k}\nonumber\\
&\le\log\frac{e^{(n-k)t}(e^{-t}\overline\omega_0+f^*\overline\chi)^n}{C_k^n\overline\omega_0^{n-k}\wedge f^*\overline\chi^k}\nonumber\\
&\le \log(1+Ce^{-t})\nonumber\\
&\le Ce^{-t},
\end{align}
i.e.,

\begin{equation}
\partial_t(e^tV_{\lambda,max}-Ct)\le0,
\end{equation}
and so

$$V_{\lambda,max}\le C'e^{-t}(t+1)\le Ce^{-\frac{3}{4}t}$$
for some uniform constant $C\ge1$ independent on $\lambda\in(0,\lambda_0]$.  Let $\lambda\to0$, we find a constant $C\ge1$ such that

\begin{equation}\label{upper1}
\sup_{(X\setminus D)\times[0,\infty)}V\le Ce^{-\frac{3}{4}t}.
\end{equation}

For the lower bound we define $V^\lambda:=V-e^{-t}\lambda\log|S|^2_h$ for any $\lambda\in(0,\lambda_0]$. We know the minimal value $V^\lambda_{min}(t)$ of $V^\lambda(t)$ must be achieved at some point in $X\setminus D$. Similarly, we have

\begin{align}
\partial_tV^{\lambda}_{min}+V^{\lambda}_{min}&\ge\log\frac{e^{(n-k)t}(e^{-t}\overline\omega_0+f^*\overline\chi-e^{-t}f^*(\chi-\lambda R_{h'}))^n}{C_k^n\overline\omega_0^{n-k}\wedge f^*\overline\chi^k}\nonumber\\
\end{align}
We now fix a large $T_1$ such that $e^{-t}\overline\omega_0+f^*\overline\chi-2e^{-t}f^*\chi>0$ for any $t\ge T_1$ (in particular, $T_1$ does not depend on $\lambda\in(0,\lambda_0]$). Then, for any $t\ge T_1$,
\begin{align}
\log\frac{e^{(n-k)t}(e^{-t}\overline\omega_0+f^*\overline\chi-e^{-t}f^*(\chi-\lambda R_{h'}))^n}{C_k^n\overline\omega_0^{n-k}\wedge f^*\overline\chi^k}&\ge\log\frac{e^{(n-k)t}(e^{-t}\overline\omega_0+f^*\overline\chi-2e^{-t}f^*\chi)^n}{C_k^n\overline\omega_0^{n-k}\wedge f^*\overline\chi^k}\nonumber\\
&\ge \log(1-Ce^{-t})\nonumber\\
&\ge-Ce^{-t},
\end{align}
where $C\ge1$ is a uniform positive constant independent on $t\in[0,\infty)$ and $\lambda\in(0,\lambda_0]$, and we may increase $T_1$ to make sure that $1-Ce^{-t}>0$ for any $t\ge T_1$.
Therefore, we arrive at 
\begin{equation}
\partial_t(e^tV^\lambda_{min}+Ct)\ge0,
\end{equation}
on $t\in[T_1,\infty)$, from which we conclude
$$\inf_{(X\setminus D)\times[T_1,\infty)}V^\lambda\ge-Ce^{-\frac{3}{4}t}$$
for some constant $C\ge1$ independent on $\lambda\in(0,\lambda_0]$. By letting $\lambda\to0$, we find
\begin{equation}\label{lower.1}
\inf_{(X\setminus D)\times[T_1,\infty)}V\ge-Ce^{-\frac{3}{4}t}.
\end{equation}
Moreover, since $V$ is uniformly bounded on $(X\setminus D)\times[0,T_1]$, after possibly increasing $C$ in \eqref{lower.1} we have
\begin{equation}\label{lower.2}
\inf_{(X\setminus D)\times[0,\infty)}V\ge-Ce^{-\frac{3}{4}t}.
\end{equation}

By combining \eqref{upper1}, \eqref{lower.2} and the fact that $\rho$ is a bounded function on $X$ we conclude \eqref{conv_00} and Proposition \ref{prop_conv} is proved.
\end{proof}

\subsection{Proof of Theorem \ref{main_thm} (2.1)}

Consequently, we have

\begin{proof}[Proof of Theorem \ref{main_thm} (2.1)]
By Lemma \ref{C0} and Proposition \ref{prop_conv}, we conclude that, as $t\to\infty$, $\varphi+\delta|S|^{2\beta}_h\to f^*\psi$ in $L^1(X,\Omega)$-topology and so $\omega(t)\to f^*\overline\chi$ as currents on $X$.
\end{proof}

\section{uniform equivalence to a family of collapsing conical K\"ahler metrics}\label{equiv}
Recall \eqref{weak_equiv} that for any $T<\infty$, there exists a constant $A=A(T)\ge1$ such that 
\begin{equation}\label{weak_equiv.1}
A^{-1}\omega_0^*\le\omega(t)\le A\omega_0^*
\end{equation}
holds on $(X\setminus D)\times[0,T]$. In our setting, we know the K\"ahler class $[\omega(t)]=e^{-t}[\omega_0]+(1-e^{-t})[f^*\chi]$ along the twisted conical K\"ahler-Ricci flow \eqref{TCKRF} will converge to $[f^*\chi]$, which lies on the boundary of K\"ahler cone of $X$ and is not strictly positive. Therefore, $A=A(T)$ in \eqref{weak_equiv.1} should go to $\infty$ as $T\to\infty$, i.e. $\omega(t)$ should not be uniformly equivalent to a fixed conical K\"ahler metric on $(X\setminus D)\times[0,\infty)$. 
\par In this section, we will show that $\omega(t)$ is uniformly equivalent to a family of conical K\"ahler metrics on $X$ whose K\"ahler classes will degenerate to $[f^*\chi]$ (see Proposition \ref{prop2}). Let's begin by the following key lemma, which will play an important role in our later discussions. Recall that, defined by \eqref{ref_metric}, $\chi^*$ is a model conical metric on $Y$ with cone angle $2\pi\beta$ along $D'$.

\begin{lem}[key lemma]\label{C2.1}
There exists a constant $C\ge1$ such that there holds on $(X\setminus D)\times[0,\infty)$ that
\begin{equation}
tr_{\omega(t)}f^*\chi^*\le C.
\end{equation}
\end{lem}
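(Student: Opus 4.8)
The plan is to run a parabolic Schwarz-type (second-order) estimate on the quantity $\log \mathrm{tr}_{\omega(t)}f^*\chi^*$, combined with the auxiliary-function trick (adding a small multiple of $\log|S|^2_h$) used throughout the paper to localize the maximum away from the divisor $D$. First I would recall the standard Chern--Lu / Schwarz lemma inequality: since $f^*\chi^*$ is a pullback of a (conical) K\"ahler metric on $Y$, on $X\setminus D$ we have a computation of the form
\begin{equation}
\Delta_{\omega(t)}\log \mathrm{tr}_{\omega(t)}f^*\chi^* \ge -C_0\,\mathrm{tr}_{\omega(t)}f^*\chi^* - \frac{\mathrm{Ric}(\omega(t))\text{-terms}}{\mathrm{tr}_{\omega(t)}f^*\chi^*},
\end{equation}
where $C_0$ bounds from above the holomorphic bisectional curvature of $\chi^*$ (this is where the one-component hypothesis on $D'$ enters, as flagged in the introduction). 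Coupling this with the evolution equation $\partial_t\omega(t) = -\mathrm{Ric}(\omega(t))-\omega(t)+\frac{1}{T_{max}}\omega_0+2\pi(1-\beta)[D]$ (which away from $D$ reads $\partial_t\omega(t)=-\mathrm{Ric}(\omega(t))-\omega(t)+\frac{1}{T_{max}}\omega_0$), the Ricci terms are replaced by $\partial_t$-derivatives of $\omega(t)$ plus controlled zeroth-order terms, yielding a differential inequality
\begin{equation}
(\partial_t-\Delta_{\omega(t)})\log\mathrm{tr}_{\omega(t)}f^*\chi^* \le C_0\,\mathrm{tr}_{\omega(t)}f^*\chi^* + C_1.
\end{equation}

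Next I would introduce the test function $Q := \log\mathrm{tr}_{\omega(t)}f^*\chi^* - A\varphi_\lambda + \lambda'\log|S|^2_h$ for a large constant $A$ (chosen bigger than $C_0$ so that the bad term $C_0\,\mathrm{tr}_{\omega(t)}f^*\chi^*$ is absorbed by $-A\Delta_{\omega(t)}\varphi_\lambda$, which contributes $+A\,\mathrm{tr}_{\omega(t)}(\omega(t)-\omega_t-\delta\sqrt{-1}\partial\bar\partial|S|^{2\beta}_h-\cdots)$ and in particular a term $-A\,\mathrm{tr}_{\omega(t)}(\text{positive})$), and a small $\lambda,\lambda'\in(0,\lambda_0]$ so that the extra $\log|S|^2_h$ forces any spatial maximum of $Q(\cdot,t)$ on $X\times[0,T]$ to lie in $X\setminus D$ (as in Lemmas \ref{C0}, \ref{C00}; here I use the uniform bounds on $\varphi$ from Lemma \ref{C0} and $\partial_t\varphi$ from Lemma \ref{C00} to keep the $A\varphi_\lambda$ and $\partial_t$-terms under control). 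At an interior spatial maximum one gets $\mathrm{tr}_{\omega(t)}f^*\chi^* \le C$ at that point with $C$ independent of $T$ and $\lambda$; combined with the (uniform in $T,\lambda$) bound on the remaining terms of $Q$ this gives $\sup_{X\setminus D} Q(\cdot,t)\le C$ for each $t$, hence $\sup_{(X\setminus D)\times[0,T]}\mathrm{tr}_{\omega(t)}f^*\chi^* \le C$; letting $T\to\infty$ and $\lambda,\lambda'\to0$ finishes the bound.

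One technical point I would be careful about: the estimate must be performed genuinely on the conical equation, since on the boundary divisor the curvature of $\chi^*$ degenerates and a naive smooth approximation does not obviously converge — this is precisely the subtlety the paper advertises in Remark \ref{rem_key}. Concretely, the argument is legitimate because for every finite $T$ the solution $\omega(t)$ is, by \eqref{weak_equiv.1}, uniformly equivalent to $\omega_0^*$ on $(X\setminus D)\times[0,T]$ and smooth there, so $Q$ is smooth on $(X\setminus D)\times[0,T]$, and the $\log|S|^2_h$ term tends to $-\infty$ near $D$; this guarantees the spatial maximum is attained in the open part and the maximum principle applies rigorously. I would also need the trivial fact that at $t=0$, $\mathrm{tr}_{\omega_0^*}f^*\chi^*$ is bounded (indeed $f^*\chi^*\le C\omega_0^*$ as conical metrics, since both have the same cone angle along $D=f^*D'$), which handles the initial-time contribution.

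The main obstacle I expect is establishing the Chern--Lu inequality in the conical setting with a \emph{uniform} constant $C_0$, i.e. controlling the holomorphic bisectional curvature of the model conical metric $\chi^*$ on $Y\setminus D'$ from above near $D'$; this is exactly the ingredient the introduction says is only available when $D'$ has a single irreducible component, and it is the reason Lemma \ref{C2.1} (and hence Theorem \ref{main_thm}(1)) is currently stated only in that case.
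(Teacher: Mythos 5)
Your proposal is correct and follows essentially the same route as the paper's proof: the parabolic Schwarz lemma for $\log tr_{\omega(t)}f^*\chi^*$ with the upper bisectional curvature bound for the model conical metric $\chi^*$ from \cite[Proposition A.1]{JMR}, absorption of the bad term $C_0\,tr_{\omega(t)}f^*\chi^*$ by subtracting a large multiple of $\varphi$ (whose heat operator produces $-c\,tr_{\omega(t)}f^*\chi^*$ via $\Delta_{\omega(t)}\varphi=n-tr_{\omega(t)}(\omega_t+\delta\sqrt{-1}\partial\bar\partial|S|^{2\beta}_h)$ and Lemmas \ref{C0}, \ref{C00}), and the $\lambda\log|S|^2_h$ trick to push the maximum into $X\setminus D$. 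The one detail to tighten is that $\omega_t+\delta\sqrt{-1}\partial\bar\partial|S|^{2\beta}_h\ge\tfrac13 f^*\chi^*$ only holds for $t\ge T_0$ by \eqref{ineq3}, so the maximum principle should be run on $[T_0,\infty)$ with the compact time interval $[0,T_0]$ handled directly by \eqref{weak_equiv.1}, rather than over all of $[0,T]$ at once.
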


\begin{proof}
The proof makes use of Lemmas \ref{C0}, \ref{C00}, a parabolic Schwarz lemma argument \cite{ST07,Y78} and, especially, an upper bound for bisectional curvature of $\chi^*$ in \cite[Proposition A.1]{JMR}. Firstly, we note that by \eqref{weak_equiv.1} $tr_{\omega(t)}f^*\chi^*$ is uniformly bounded on $(X\setminus D)\times[0,1]$. Secondly, for any given point $x\in X\setminus D$, we choose normal coordinates for $\omega(t)$ around $x$ and $\chi^*$ around $f(x)$, in which we write $tr_{\omega(t)}f^*\chi^*=g^{\bar ji}f^\alpha_i\overline{f^\beta_j}\chi^*_{\alpha\bar\beta}$. A standard computation gives

\begin{align}\label{P2.2}
&(\partial_t-\Delta_{\omega(t)})tr_{\omega(t)}f^*\chi^*\nonumber\\
&=tr_{\omega(t)}f^*\chi^*-\langle\frac{1}{T_{max}}\omega_0,f^*\chi^*\rangle_{\omega(t)}+g^{\bar lk}g^{\bar j i}Rm(\chi^*)_{\alpha\bar\beta\gamma\bar\delta}f^{\alpha}_i\overline{f^\beta_j}f^\gamma_k\overline{f^\delta_l}-g^{\bar lk}g^{\bar ji}(\partial_kf^\alpha_i)(\partial_lf^\beta_j)\chi^*_{\alpha\bar\beta}\nonumber\\
&\le tr_{\omega(t)}f^*\chi^*+B(tr_{\omega(t)}f^*\chi^*)^2-g^{\bar lk}g^{\bar ji}(\partial_kf^\alpha_i)(\partial_lf^\beta_j)\chi^*_{\alpha\bar\beta},
\end{align}
and so 
\begin{align}
(\partial_t-\Delta_{\omega(t)})\log tr_{\omega(t)}f^*\chi^*\le Btr_{\omega(t)}f^*\chi^*+1
\end{align}
on $(X\setminus D)\times[0,\infty)$, where $B$ is an upper bound for bisectional curvature of $\chi^*$ on $Y\setminus D'$ given by \cite[Proposition A.1]{JMR} (also see \cite[Section 7]{Br} and \cite[Lemma 2.3]{JMR} for the case $\beta\in(0,\frac{1}{2})$) and we have used
\begin{equation}\label{P2.3}
|\nabla tr_{\omega(t)} f^*\chi^*|^2-(tr_{\omega(t)} f^*\chi^*)g^{\bar lk}g^{\bar ji}\partial_kf^\alpha_{i}\overline{\partial_lf^\beta_{j}}\chi^*_{\alpha\bar\beta}\le0,
\end{equation}

On the other hand, by Lemma \ref{C00} and \eqref{ineq3} we have a uniform constant $\tilde C\ge1$ such that
\begin{align}
(\partial_t-\Delta_{\omega(t)})\varphi&=\partial_t\varphi-tr_{\omega(t)}(\omega(t)-e^{-t}\omega_0)+tr_{\omega(t)}f^*(\chi^*-e^{-t}\chi)\nonumber\\
&\ge tr_{\omega(t)}f^*(\chi^*-e^{-t}\chi)-\tilde C\nonumber\\
&\ge\frac{5}{6}tr_{\omega(t)}f^*\chi^*-\tilde C
\end{align}
on $(X\setminus D)\times[T_0,\infty)$, here $T_0$ is given in \eqref{ineq3}. Therefore, we arrive at
\begin{equation}
(\partial_t-\Delta_{\omega(t)})(\log tr_{\omega(t)}f^*\chi^*-\frac{6}{5}(B+1)\varphi)\le-tr_{\omega(t)}f^*\chi^*+1+\frac{6}{5}\tilde C(B+1)
\end{equation}
and so, for any $\lambda\in(0,\lambda_0]$,
\begin{align}\label{ineq11}
&(\partial_t-\Delta_{\omega(t)})(\log tr_{\omega(t)}f^*\chi^*-\frac{6}{5}(B+1)\varphi+\lambda\log|S|^2_h)\nonumber\\
&\le-tr_{\omega(t)}f^*(\chi^*+\lambda R_{h'})+1+\frac{6}{5}\tilde C(B+1)\nonumber\\
&\le-\frac{1}{2}tr_{\omega(t)}f^*\chi^*+\frac{6}{5}\tilde C(B+1).
\end{align}
on $(X\setminus D)\times[T_0,\infty)$.  Now by applying the maximum principle in \eqref{ineq11}, Lemma \ref{C0} and \eqref{weak_equiv.1}, we can find a constant $C\ge1$ such that for any $\lambda\in(0,\lambda_0]$ we have
\begin{align}
\log tr_{\omega(t)}f^*\chi^*+\lambda\log|S|^2_h\le C,
\end{align}
which implies 
\begin{equation}\label{upper.2}
tr_{\omega(t)}f^*\chi^*\le C
\end{equation}
on $(X\setminus D)\times[T_0,\infty)$. By \eqref{weak_equiv.1}, after possibly increasing $C$ in \eqref{upper.2}, we have
\begin{equation}\label{upper.3}
\sup_{(X\setminus D)\times[0,\infty)}tr_{\omega(t)}f^*\chi^*\le C.
\end{equation}
Lemma \ref{C2.1} is proved.
\end{proof}

We would like to make a remark on Lemma \ref{C2.1} and its proof.

\begin{rem}\label{rem_key}
The key Lemma \ref{C2.1} seems can \emph{not} be proved by the usual smooth approximation arguments in e.g. \cite{Ed15,LZ}. A key point is that, while $\chi^*$ has an upper bound for bisectional curvature, it seems  unclear how to approximate $\chi^*$ (in a suitable sense) by a family of smooth K\"ahler metrics with bisectional curvature uniformly bounded from above. In our argument, we don't need to pass to a smooth approximation of conical equation and the upper bound for bisectional curvature of $\chi^*$ itself is enough to carry out our Lemma \ref{C2.1}.
\end{rem}

Next we make use of Lemma \ref{C2.1} and an argument in \cite{ST07} to show
\begin{lem}\label{C2.2}
There exists a constant $C\ge1$ such that there holds on $(X\setminus D)\times[0,\infty)$ that
\begin{equation}
tr_{\omega(t)}(e^{-t}\omega_0)\le C.
\end{equation}
\end{lem}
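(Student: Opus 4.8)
The plan is to prove Lemma \ref{C2.2} by a parabolic Schwarz lemma argument applied to $tr_{\omega(t)}(e^{-t}\omega_0)$, in the spirit of \cite{ST07,Y78}, absorbing the resulting curvature term by means of Lemma \ref{C2.1} together with the potential $\varphi$. The main simplification over Lemma \ref{C2.1} is that the reference form $\omega_0$ is here a \emph{smooth} K\"ahler metric on the compact manifold $X$, so it automatically has bisectional curvature bounded above by some constant $B_0$; no analogue of \cite[Proposition A.1]{JMR} is needed.

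First I would record the evolution inequalities. Write $\hat\omega(t):=e^{-t}\omega_0$, so that $\partial_t\hat\omega=-\hat\omega$. On $(X\setminus D)\times[0,\infty)$ the flow \eqref{TCKRF} reads $\partial_t\omega=-Ric(\omega)-\omega+\frac{1}{T_{max}}\omega_0$; the standard Schwarz lemma computation (the Ricci terms cancel, the contribution of $-\omega$ cancels against that of $\partial_t\hat\omega=-\hat\omega$, and $\frac{1}{T_{max}}\langle\omega_0,\hat\omega\rangle_{\omega(t)}\ge0$ is discarded) then gives
$$(\partial_t-\Delta_{\omega(t)})\log tr_{\omega(t)}(e^{-t}\omega_0)\le B_0\, tr_{\omega(t)}(e^{-t}\omega_0)\qquad\text{on }(X\setminus D)\times[0,\infty).$$
On the other hand, since $\sqrt{-1}\partial\bar\partial\varphi=\omega(t)-e^{-t}\omega_0+e^{-t}f^*\chi-f^*\chi^*$, one has
$$(\partial_t-\Delta_{\omega(t)})\varphi=\partial_t\varphi-n+tr_{\omega(t)}(e^{-t}\omega_0)-e^{-t}tr_{\omega(t)}f^*\chi+tr_{\omega(t)}f^*\chi^*.$$
Setting $A:=\max\{B_0+1,2\}$ and, exactly as in the proof of Lemma \ref{C2.1}, $G_\lambda:=\log tr_{\omega(t)}(e^{-t}\omega_0)-A\varphi+\lambda\log|S|^2_h$ for $\lambda\in(0,\lambda_0]$, I would combine the two displays. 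The term $A\, tr_{\omega(t)}(e^{-t}\omega_0)$ coming from $-A\Delta_{\omega(t)}\varphi$ absorbs $B_0\, tr_{\omega(t)}(e^{-t}\omega_0)$ with room to spare; the remaining trace terms produced by $-A\Delta_{\omega(t)}\varphi$ and by $\lambda\log|S|^2_h$, namely $Ae^{-t}tr_{\omega(t)}f^*\chi+\lambda\, tr_{\omega(t)}f^*R_{h'}-A\, tr_{\omega(t)}f^*\chi^*$, are $\le0$ on $(X\setminus D)\times[T_0,\infty)$: indeed \eqref{ineq3} gives $e^{-t}f^*\chi\le\frac16 f^*\chi^*$ for $t\ge T_0$, \eqref{ineq1} gives $f^*R_{h'}\le\frac{1}{\lambda_0}f^*\chi^*$, and one uses $\lambda\le\lambda_0$ and $A\ge2$. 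Since $\partial_t\varphi$ is bounded by Lemma \ref{C00}, this yields
$$(\partial_t-\Delta_{\omega(t)})G_\lambda\le-\, tr_{\omega(t)}(e^{-t}\omega_0)+C\qquad\text{on }(X\setminus D)\times[T_0,\infty),$$
with $C$ independent of $\lambda$.

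Next I would apply the maximum principle. Fix $T>T_0$. Because $\log|S|^2_h\to-\infty$ near $D$ while $tr_{\omega(t)}(e^{-t}\omega_0)$ and $\partial_t\varphi$ are bounded on $(X\setminus D)\times[0,T]$ by \eqref{weak_equiv.1}, the function $G_\lambda$ attains its maximum over $\overline{X\setminus D}\times[T_0,T]$ at some $(x,t)$ with $x\in X\setminus D$. If $t=T_0$ the value is controlled by \eqref{weak_equiv.1} on $[0,T_0]$; if $t>T_0$ then $(\partial_t-\Delta_{\omega(t)})G_\lambda\ge0$ there forces $tr_{\omega(t)}(e^{-t}\omega_0)(x,t)\le C$, hence $G_\lambda(x,t)\le\log C+A\sup_{(X\setminus D)\times[0,\infty)}|\varphi|$ by Lemma \ref{C0} and by $|S|^2_h=|S'|^2_{h'}\le1$ from \eqref{ineq0}. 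Thus $G_\lambda\le C'$ on $(X\setminus D)\times[T_0,\infty)$ for every $\lambda\in(0,\lambda_0]$, with $C'$ independent of $\lambda$; letting $\lambda\to0$ gives $\log tr_{\omega(t)}(e^{-t}\omega_0)\le C'+A\varphi\le C''$ on $(X\setminus D)\times[T_0,\infty)$, and combining with \eqref{weak_equiv.1} on $[0,T_0]$ gives the claim.

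The only genuinely delicate points are the standard ones in this circle of ideas, and neither is a serious obstacle. First, the reference form $e^{-t}\omega_0$ itself collapses as $t\to\infty$, so one must check that passing to $\log$ makes this harmless; in fact it is helpful, since $\partial_t\log tr_{\omega(t)}(e^{-t}\omega_0)$ picks up the $-1$ that cancels the $-\omega$ term of the flow, which is why no additive constant appears in the Schwarz inequality above (in contrast with the $+1$ appearing in \eqref{P2.2} for the fixed form $f^*\chi^*$). Second, one must control the auxiliary trace terms created by the $\lambda\log|S|^2_h$ regularization used to keep the maximum away from $D$, which is exactly where Lemma \ref{C2.1} — through the bound on $tr_{\omega(t)}f^*\chi^*$ — enters. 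No new analytic input beyond Lemmas \ref{C0}, \ref{C00}, \ref{C2.1} and the classical parabolic Schwarz lemma is required.
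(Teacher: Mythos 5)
Your Schwarz-lemma inequality for $e^{-t}\omega_0$ is off by a factor of $e^{t}$ in the curvature term, and this is precisely the difficulty the lemma has to overcome. If $B_0$ bounds the bisectional curvature of the fixed metric $\omega_0$, then the curvature contribution in the Chern--Lu computation for the time-dependent reference form $\hat\omega:=e^{-t}\omega_0$ is
$$\frac{g^{\bar lk}g^{\bar ji}\,e^{-t}Rm(\omega_0)_{i\bar jk\bar l}}{tr_{\omega(t)}(e^{-t}\omega_0)}\le\frac{e^{-t}B_0(tr_{\omega(t)}\omega_0)^2}{e^{-t}tr_{\omega(t)}\omega_0}=B_0\,tr_{\omega(t)}\omega_0=e^{t}B_0\,tr_{\omega(t)}(e^{-t}\omega_0),$$
not $B_0\,tr_{\omega(t)}(e^{-t}\omega_0)$: rescaling a metric by $e^{-t}$ multiplies its bisectional curvature by $e^{t}$. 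This is why the paper's inequality \eqref{ineq12} reads $(\partial_t-\Delta_{\omega(t)})\log tr_{\omega(t)}(e^{-t}\omega_0)\le B_0\,tr_{\omega(t)}\omega_{0}+C$ with the \emph{unrescaled} trace on the right. With the correct inequality your absorption scheme collapses: the term $-A\Delta_{\omega(t)}\varphi$ only produces $-A\,tr_{\omega(t)}(e^{-t}\omega_0)=-Ae^{-t}tr_{\omega(t)}\omega_0$, which cannot dominate $B_0\,tr_{\omega(t)}\omega_0$ for large $t$ with any fixed constant $A$; replacing $A\varphi$ by $Ae^{t}\varphi$ does not work either, since then the zeroth-order terms contribute an uncontrolled $Ce^{t}$ and $e^t\varphi$ is not uniformly bounded.

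This is where the paper needs a genuinely new ingredient beyond Lemma \ref{C2.1}: following Song--Tian, it introduces the fiberwise average $\overline\varphi$ and shows, via Yau's $C^0$-estimate applied to the restricted Monge--Amp\`ere equation on each fiber (an estimate that itself relies on Lemma \ref{C2.1} to bound $(e^t\omega(t)|_{X_y})^{n-k}/(\omega_0|_{X_y})^{n-k}$ from above), that $e^t(\varphi-\overline\varphi)$ is uniformly bounded and satisfies $(\partial_t-\Delta_{\omega(t)})(e^t(\varphi-\overline\varphi))\ge tr_{\omega(t)}\omega_0-Ce^t$. Subtracting $(B_0+1)e^t(\varphi-\overline\varphi)$, rather than a multiple of $\varphi$, from $\log tr_{\omega(t)}(e^{-t}\omega_0)$ then yields $-tr_{\omega(t)}\omega_0+Ce^t$ on the right; at an interior maximum this gives $tr_{\omega(t)}\omega_0\le Ce^t$, i.e.\ $tr_{\omega(t)}(e^{-t}\omega_0)\le C$ there, and the bound propagates to all points precisely because $e^t(\varphi-\overline\varphi)$ is uniformly bounded. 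Your scaffolding (the $\lambda\log|S|^2_h$ regularization, the use of Lemma \ref{C2.1}, the maximum principle on $[T_0,T]$) is otherwise sound, but without the $\overline\varphi$ construction the argument does not close.
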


\begin{proof}
As before, by \eqref{weak_equiv.1} we know $tr_{\omega(t)}(e^{-t}\omega_0)$ is uniformly bounded on $(X\setminus D)\times[0,1]$. 
\par Firstly, we have two positive constants $B_0$ (an upper bound for bisectional curvature of $\omega_0$) and $C$ such that
\begin{equation}\label{ineq12}
(\partial_t-\Delta_{\omega(t)})\log tr_{\omega(t)}(e^{-t}\omega_0)\le B_0tr_{\omega(t)}\omega_{0}+C
\end{equation}
on $(X\setminus D)\times[0,\infty)$. 
\par Next, following \cite{ST07}, we define a function $\overline\varphi$ on $(Y\setminus D')\times[0,\infty)$: for any $(y,t)\in(Y\setminus D')\times[0,\infty)$, 
\begin{equation}
\overline\varphi(y,t):=\frac{\int_{X_y}\varphi(t)(\omega_{0}|_{X_y})^{n-k}}{\int_{X_y}(\omega_{0}|_{X_y})^{n-k}}.
\end{equation}
Then we have
\begin{align}
(\omega_0|_{X_y}+\sqrt{-1}\partial\bar\partial(e^t(\varphi|_{X_y}-\overline\varphi(y,t))))^{n-k}&=(e^{t}\omega(t)|_{X_y})^{n-k},
\end{align}
and so
\begin{align}
&\frac{(\omega_0|_{X_y}+\sqrt{-1}\partial\bar\partial(e^t(\varphi|_{X_y}-\overline\varphi)))^{n-k}}{(\omega_0|_{X_y})^{n-k}}\nonumber\\
&=\frac{(e^{t}\omega(t)|_{X_y})^{n-k}}{(\omega_0|_{X_y})^{n-k}}\nonumber\\
&=\frac{e^{(n-k)t}\omega(t)^{n-k}\wedge f^*(\chi^*)^k}{\omega_0^{n-k}\wedge f^*(\chi^*)^k}\nonumber\\
&=\frac{\omega(t)^{n-k}\wedge f^*(\chi^*)^k}{\omega(t)^n}\cdot\frac{e^{(n-k)t}\omega(t)^n}{\omega_0^{n-k}\wedge f^*(\chi^*)^k}\nonumber\\
&=\frac{\omega(t)^{n-k}\wedge f^*(\chi^*)^k}{\omega(t)^n}\cdot\frac{e^{\partial_t\varphi+\varphi+\delta|S|^{2\beta}_h}|S|^{-2(1-\beta)}_h\Omega}{\omega_0^{n-k}\wedge f^*(\chi^*)^k}\nonumber\\
&\le C' (tr_{\omega(t)}f^*\chi^*)^{k}\nonumber\\
&\le C,
\end{align}
where we have used Lemmas \ref{C0}, \ref{C00} and \ref{C2.1}, and $C$ in the last line is a positive constant uniform for $(x,t)\in(X\setminus D)\times[0,\infty)$ (we point out that Lemma \ref{C2.1} plays a crucial role in this step). Then by Yau \cite{Y} we get a constant $C\ge1$ such that for all $(y,t)\in(Y\setminus D')\times[0,\infty)$,
\begin{equation}\label{ineq12.5}
\sup_{X_y}|e^t(\varphi-\overline\varphi)|\le C.
\end{equation}
Note that $|S|^{2\beta}_h=f^*|S'|^{2\beta}_{h'}$ is constant along every fiber $X_y$ and hence 
$$(\varphi+\delta|S|^{2\beta}_h)-(\overline\varphi+\delta|S|^{2\beta}_h)=\varphi-\overline\varphi.$$
Now by using arguments in \cite{ST07} (also see \cite{FZ}), Lemma \ref{C00} and Lemma \ref{C2.1} (in fact a weaker version that $tr_{\omega(t)}f^*\chi\le C$ is enough) we have a constant $C\ge1$ such that 
\begin{align}\label{ineq13}
(\partial_t-\Delta_{\omega(t)})(e^t(\varphi-\overline\varphi))&=(\partial_t-\Delta_{\omega(t)})\left(e^t((\varphi+\delta|S|^{2\beta}_h)-(\overline\varphi+\delta|S|^{2\beta}_h))\right)\nonumber\\
&\ge tr_{\omega(t)}\omega_0-Ce^t
\end{align}
holds on $(X\setminus D)\times[0,\infty)$. Combining \eqref{ineq12} and \ref{ineq13} gives
\begin{equation}
(\partial_t-\Delta_{\omega(t)})(\log tr_{\omega(t)}(e^{-t}\omega_0)-(B_0+1)e^t(\varphi-\overline\varphi))\le -tr_{\omega(t)}\omega_0+Ce^t
\end{equation}
on $(X\setminus D)\times[0,\infty)$, where $C\ge1$ is a uniform constant. Now for any $\lambda\in(0,\lambda_0]$, by \eqref{ineq2.1} there holds on $(X\setminus D)\times[0,\infty)$ that
\begin{equation}\label{ineq14}
(\partial_t-\Delta_{\omega(t)})(\log tr_{\omega(t)}(e^{-t}\omega_0)-(B_0+1)e^t(\varphi-\overline\varphi)+\lambda\log|S|^2_h)\le -\frac{1}{2}tr_{\omega(t)}\omega_0+Ce^t.
\end{equation}
By applying the maximum principle in \eqref{ineq14} and using \eqref{ineq12.5} we find a constant $C\ge1$ such that for any $\lambda\in(0,\lambda_0]$,
$$\sup_{(X\setminus D)\times[0,\infty)}(\log tr_{\omega(t)}(e^{-t}\omega_0)-(B_0+1)e^t(\varphi-\overline\varphi)+\lambda\log|S|^2_h)\le C,$$
from which, by letting $\lambda\to0$ and using \eqref{ineq12.5} again, we conclude a constant $C\ge1$ such that
\begin{equation}
\sup_{(X\setminus D)\times[0,\infty)}tr_{\omega(t)}(e^{-t}\omega_0)\le C.
\end{equation}
Lemma \ref{C2.2} is proved.
\end{proof}

The main result of this section is the following, which says $\omega(t)$ is uniformly equivalent to a family of collapsing conical K\"ahler metrics.

\begin{prop}\label{prop2}
There exists a constant $C\ge1$ such that 
\begin{equation}\label{C2.3}
C^{-1}(e^{-t}\omega_0+f^*\chi^*)\le\omega(t)\le C(e^{-t}\omega_0+f^*\chi^*)
\end{equation}
holds on $(X\setminus D)\times[0,\infty)$.
\end{prop}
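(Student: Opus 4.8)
The plan is to combine the three estimates already established---Lemma~\ref{C2.1} ($tr_{\omega(t)}f^*\chi^*\le C$), Lemma~\ref{C2.2} ($tr_{\omega(t)}(e^{-t}\omega_0)\le C$), and the bounds on $\varphi$ and $\partial_t\varphi$ from Lemmas~\ref{C0}, \ref{C00}---into a two-sided metric equivalence. Adding the conclusions of Lemmas~\ref{C2.1} and \ref{C2.2} immediately gives the upper bound
\begin{equation*}
tr_{\omega(t)}(e^{-t}\omega_0+f^*\chi^*)\le C\quad\text{on }(X\setminus D)\times[0,\infty),
\end{equation*}
and since $e^{-t}\omega_0+f^*\chi^*\ge0$ is a nonnegative $(1,1)$-form dominating $\omega(t)$ up to the trace bound, standard linear algebra yields $\omega(t)\le C(e^{-t}\omega_0+f^*\chi^*)$; this is the easy half.

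For the reverse inequality I would use the volume form identity. From \eqref{TCCMA} one reads off
\begin{equation*}
e^{(n-k)t}\omega(t)^n=e^{\partial_t\varphi+\varphi+\delta|S|^{2\beta}_h}|S|_h^{-2(1-\beta)}\Omega,
\end{equation*}
so by Lemmas~\ref{C0} and \ref{C00} the quantity $e^{(n-k)t}\omega(t)^n$ is uniformly comparable to $|S|_h^{-2(1-\beta)}\Omega$. On the other hand, expanding $(e^{-t}\omega_0+f^*\chi^*)^n$ and noting that $f^*\chi^*$ has rank at most $k$ along the fibers, the leading term as $t\to\infty$ is $C^n_k\, e^{-(n-k)t}\,\omega_0^{n-k}\wedge f^*(\chi^*)^k$, which is comparable to $e^{-(n-k)t}|S|_h^{-2(1-\beta)}\Omega$ (using that $\omega_0^{n-k}\wedge f^*(\chi^*)^k$ and $|S|_h^{-2(1-\beta)}\Omega$ differ by a bounded factor, as in the definition of $G$). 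Hence
\begin{equation*}
\frac{(e^{-t}\omega_0+f^*\chi^*)^n}{\omega(t)^n}\le C
\end{equation*}
uniformly on $(X\setminus D)\times[0,\infty)$. Combining this volume ratio bound with the already-established upper bound $\omega(t)\le C(e^{-t}\omega_0+f^*\chi^*)$ via the elementary inequality
\begin{equation*}
tr_{\omega(t)}(e^{-t}\omega_0+f^*\chi^*)\le \frac{1}{(n-1)!}\big(tr_{(e^{-t}\omega_0+f^*\chi^*)}\,\omega(t)\big)^{n-1}\cdot\frac{(e^{-t}\omega_0+f^*\chi^*)^n}{\omega(t)^n}
\end{equation*}
(or equivalently controlling the largest eigenvalue of $e^{-t}\omega_0+f^*\chi^*$ with respect to $\omega(t)$ by the product of the other eigenvalues times the volume ratio) gives $tr_{(e^{-t}\omega_0+f^*\chi^*)}\omega(t)\le C$, i.e. the lower bound $\omega(t)\ge C^{-1}(e^{-t}\omega_0+f^*\chi^*)$.

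The main subtlety is not any single estimate but making sure all comparisons are genuinely uniform down to the divisor $D$ and for all $t\in[0,\infty)$ simultaneously. On $[0,1]$ (or $[0,T_0]$) one falls back on \eqref{weak_equiv.1} together with the fact that $e^{-t}\omega_0+f^*\chi^*$ is itself uniformly comparable to $\omega_0^*$ on that compact time interval; for large $t$ one uses the expansions above, being careful that the lower-order terms $e^{-jt}\omega_0^{n-j}\wedge f^*(\chi^*)^{j}$ with $j>k$ are controlled by the leading one and that $f^*(\chi^*)^{k+1}=0$ does not cause the leading term to degenerate transversally to the fibers---this is exactly where the submersion hypothesis and Lemma~\ref{C2.1} are used. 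I expect the bookkeeping near $D$, where $|S|_h^{-2(1-\beta)}\Omega$ blows up but matches the blow-up of both $\omega(t)^n$ and $(e^{-t}\omega_0+f^*\chi^*)^n$, to be the part requiring the most care, though it is ultimately routine given the cited results.
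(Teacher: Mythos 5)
Your proposal has the same architecture as the paper's proof --- trace bounds from Lemmas \ref{C2.1} and \ref{C2.2} for one inequality, and a volume-form comparison combined with the mixed-trace inequality for the other --- but both halves are written with the directions reversed, and as stated neither deduction is valid. In your first step, $tr_{\omega(t)}(e^{-t}\omega_0+f^*\chi^*)\le C$ says that the eigenvalues of $e^{-t}\omega_0+f^*\chi^*$ with respect to $\omega(t)$ sum to at most $C$; since they are nonnegative, this gives $e^{-t}\omega_0+f^*\chi^*\le C\,\omega(t)$, i.e.\ the \emph{lower} bound $\omega(t)\ge C^{-1}(e^{-t}\omega_0+f^*\chi^*)$ (the left-hand side of \eqref{C2.3}), not the upper bound you claim. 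No trace bound of that form can by itself yield $\omega(t)\le C(e^{-t}\omega_0+f^*\chi^*)$, because that requires a lower bound on those eigenvalues, which is exactly the information supplied by the volume ratio.

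In your second step you write the mixed-trace inequality in the orientation that bounds $tr_{\omega(t)}(e^{-t}\omega_0+f^*\chi^*)$ by a power of $tr_{(e^{-t}\omega_0+f^*\chi^*)}\omega(t)$ times the ratio $(e^{-t}\omega_0+f^*\chi^*)^n/\omega(t)^n$; from this one cannot conclude an upper bound on $tr_{(e^{-t}\omega_0+f^*\chi^*)}\omega(t)$ --- rearranging it only produces a \emph{lower} bound on that trace. What is needed, and what the paper uses, is the opposite orientation
\[
tr_{(e^{-t}\omega_0+f^*\chi^*)}\omega(t)\le\frac{1}{(n-1)!}\bigl(tr_{\omega(t)}(e^{-t}\omega_0+f^*\chi^*)\bigr)^{n-1}\cdot\frac{\omega(t)^n}{(e^{-t}\omega_0+f^*\chi^*)^n},
\]
together with the reciprocal volume ratio $\omega(t)^n/(e^{-t}\omega_0+f^*\chi^*)^n\le C$, which follows from the \emph{upper} bounds on $\partial_t\varphi$ and $\varphi$ and from the lower bound $(e^{-t}\omega_0+f^*\chi^*)^n\ge c\,e^{-(n-k)t}\omega_0^{n-k}\wedge f^*(\chi^*)^k\ge c\,e^{-(n-k)t}|S|^{-2(1-\beta)}_h\Omega$. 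Your ``comparable'' statements do contain both directions of the volume comparison, so the repair is purely a matter of swapping the two conclusions and applying the inequality in the correct orientation; once that is done, your argument coincides with the proof in the paper.
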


\begin{proof}
The left hand side follows from Lemmas \ref{C2.1} and \ref{C2.2} directly. For the right hand side, we look at
\begin{align}
tr_{(e^{-t}\omega_0+f^*\chi^*)}\omega(t)&\le\frac{1}{(n-1)!}(tr_{\omega(t)}(e^{-t}\omega_0+f^*\chi^*))^{n-1}\cdot\frac{\omega(t)^n}{(e^{-t}\omega_0+f^*\chi^*)^n}\nonumber\\
&\le C\frac{e^{-(n-k)t}e^{\partial_t\varphi+\varphi+\delta|S|^{2\beta}_h}|S|^{-2(1-\beta)}_h\Omega}{e^{-(n-k)t}\omega_0^{n-k}\wedge f^*(\chi^*)^k}\nonumber\\
&\le C,
\end{align}
on $(X\setminus D)\times[0,\infty)$, where we have used left hand side of \eqref{C2.3} and upper bound of $\partial_t\varphi,\varphi$ given by Lemmas \ref{C0} and \ref{C00}. Then we conclude that 
$$\omega(t)\le C(e^{-t}\omega_0+f^*\chi^*)$$
on $(X\setminus D)\times[0,\infty)$.
\par Proposition \ref{prop2} is proved.
\end{proof}

\begin{rem}
Before moving to the next step, we would like to provide a more direct argument for a special case of Lemma \ref{C2.2}, i.e. the case that $T_{max}=\infty$, $X=E\times Y$ with $E$ a smooth $(n-k)$-dimensional torus and $f:X=E\times Y\to Y$ is the projection. Denote $\tilde f:X\to E$ be the projection to the factor $E$ and $\omega_E$ a fixed flat K\"ahler metric on $E$. Then we shall show that there exists a constant $C\ge1$ such that for $(x,t)\in (X\setminus D)\times[0,\infty)$,
\begin{equation}\label{e.0}
tr_{\omega(t)}(e^{-t}\tilde f^*\omega_E)\le C.
\end{equation}
In fact, using $\omega_E$ is flat, we easily have
\begin{equation}
(\partial_t-\Delta_{\omega(t)})tr_{\omega(t)}(e^{-t}\tilde f^*\omega_E)\le0\nonumber
\end{equation}
and so
\begin{equation}
(\partial_t-\Delta_{\omega(t)})(tr_{\omega(t)}(e^{-t}\tilde f^*\omega_E)+\lambda\log|S|^2_h)\le \lambda tr_{\omega(t)}R_h\le \lambda \tilde Ctr_{\omega(t)}f^*\chi\le\lambda \tilde C_1\nonumber,
\end{equation}
namely,
\begin{equation}\label{e.1}
(\partial_t-\Delta_{\omega(t)})(tr_{\omega(t)}(e^{-t}\tilde f^*\omega_E)+\lambda\log|S|^2_h-\lambda\tilde C_1t)\le0
\end{equation}
on $(X\setminus D)\times[0,\infty)$. For any $T<\infty$, we can apply the maximal principle in \eqref{e.1} to see 
$$\sup_{(X\setminus D)\times[0,T]}(tr_{\omega(t)}(e^{-t}\tilde f^*\omega_E)+\lambda\log|S|^2_h-\lambda\tilde C_1t)\le C,$$
where $C$ does not depend on $T\in(0,\infty)$ and $\lambda\in(0,\lambda_0]$. Letting $T\to\infty$ and $\lambda\to0$ gives the desired result \eqref{e.0}. Having \eqref{e.0}, we can use arguments in Proposition \ref{prop2} to see that $\omega(t)$ is uniformly equivalent to $e^{-t}\tilde f^*\omega_E+f^*\chi^*$ on $(X\setminus D)\times[0,\infty)$.
\end{rem}

\subsection{Proof of Theorem \ref{main_thm} (1)} 
We now end this section by proving diameter upper bound of $(X,\omega(t))$, i.e. item (1) of Theorem \ref{main_thm}.

\begin{proof}[Proof of Theorem \ref{main_thm} (1)]
By the right hand side of \eqref{C2.3} in Proposition \ref{prop2} we can find a constant $C\ge1$ such that
$$\omega(t)\le C\omega_0^*$$
holds on $(X\setminus D)\times[0,\infty)$, where $\omega_0^*$ is defined in \eqref{ref_metric.1}, the reference conical K\"ahler metric on $X$ with cone angle $2\pi\beta$ along $D$. Let $(X,d_0)$ be the metric completion of $(X,\omega_0^*)$, which is of finite diameter. Therefore, 
$$diam(X,d_t)\le C\cdot diam(X,d_0)$$
is unifromly bounded from above.
\end{proof}

\begin{rem}
Easily, the left hand side of \eqref{C2.3} in Proposition \ref{prop2} also implies a uniform positive lower bound for $diam(X,d_t)$.
\end{rem}

\section{A bound for the twisted scalar curvature}\label{sect_scal}
For later discussions, we need to bound the twisted scalar curvature along the twisted conical K\"ahler-Ricci flow \eqref{TCKRF}. 

We define the twisted Ricci curvature by
\begin{equation}
\tilde{Ric}(\omega(t)):=Ric(\omega(t))-\frac{1}{T_{max}}\omega_0
\end{equation}
and the twisted scalar curvature by
\begin{equation}
\tilde R(\omega(t)):=tr_{\omega(t)}\tilde{Ric}(\omega(t))=R(\omega(t))-\frac{1}{T_{max}}tr_{\omega(t)}\omega_0.
\end{equation}
Note that if $T_{max}=\infty$, the twisted term vanishes and the twisted curvatures defined above are the usual curvatures.
\par We will need the following result.

\begin{lem}\label{scal}\cite{Ed15}
There exists a constant $C\ge1$ such that for any $t\in(0,\infty)$,
\begin{equation}
\sup_{X\setminus D}|\tilde R|(t)\le \frac{C}{min\{t,1\}}.
\end{equation}
\end{lem}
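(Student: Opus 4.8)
The plan is to reduce the scalar curvature bound to a bound on $\operatorname{tr}_{\omega(t)}\omega_0^*$ (equivalently $\operatorname{tr}_{\omega(t)}(e^{-t}\omega_0+f^*\chi^*)$) together with a two-sided bound on $\partial_t\varphi$, by exploiting the Monge--Amp\`ere structure of \eqref{TCCMA}. Working on $(X\setminus D)\times[0,\infty)$ in the smooth category, one has $-\operatorname{Ric}(\omega(t))=\partial_t\omega(t)+\omega(t)-\frac{1}{T_{max}}\omega_0-(1-\beta)2\pi[D]$, and away from $D$ the current term drops out, so the twisted Ricci form is $\tilde{\operatorname{Ric}}(\omega(t))=-\sqrt{-1}\partial\bar\partial\big(\partial_t\varphi+\varphi+\delta|S|^{2\beta}_h\big)+\big(\text{fixed forms}\big)$; concretely, differentiating \eqref{TCCMA} gives $\tilde R(\omega(t))=-\Delta_{\omega(t)}\partial_t\varphi-\partial_t\varphi+(\text{bounded error})+\operatorname{tr}_{\omega(t)}(\text{smooth forms})$. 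Thus a bound of the form $C/\min\{t,1\}$ on $|\tilde R|$ will follow once we control $\Delta_{\omega(t)}\partial_t\varphi$ and $\operatorname{tr}_{\omega(t)}$ of fixed K\"ahler forms; the latter is already supplied by Lemma \ref{C2.1}, Lemma \ref{C2.2} and Proposition \ref{prop2}, and $|\partial_t\varphi|\le C$ by Lemma \ref{C00}, so the genuine new content is a one-sided (actually two-sided) estimate on $\partial_t(\partial_t\varphi)$ and on $\Delta_{\omega(t)}\partial_t\varphi$ that degenerates like $1/t$ near $t=0$ and is uniform for $t\ge1$.

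The mechanism I would use is the standard parabolic Aubin--Yau argument applied to $\dot\varphi:=\partial_t\varphi$. Setting $u:=\dot\varphi$ one computes, using that $\omega(t)$ evolves by the twisted flow, an evolution inequality of the schematic form $(\partial_t-\Delta_{\omega(t)})u = -u + \operatorname{tr}_{\omega(t)}(f^*\chi-\omega_0/T_{max}) + (\text{bounded})$, and then considers $tu$ (for $t\le 1$) resp.\ $u$ (for $t\ge 1$) and the quantity $Q:=t\,\dot\varphi-\varphi-A|S|^{2\beta}_h$ or $(e^t-1)\dot\varphi$ as in the proof of Lemma \ref{C00}. Differentiating once more, $(\partial_t-\Delta_{\omega(t)})\ddot\varphi = -\ddot\varphi + (\partial_t\operatorname{tr}_{\omega(t)})(\text{fixed})$, and $\partial_t\operatorname{tr}_{\omega(t)}\theta = -\langle \partial_t\omega(t),\theta\rangle_{\omega(t)} = \langle \tilde{\operatorname{Ric}}(\omega(t))+\omega(t),\theta\rangle_{\omega(t)}$, which couples back to $\tilde R$. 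The clean way is instead: feed the already-established bounds $C^{-1}(e^{-t}\omega_0+f^*\chi^*)\le\omega(t)\le C(e^{-t}\omega_0+f^*\chi^*)$ from Proposition \ref{prop2} into Theorem \ref{scal_thm}-type arguments near $t=0$ — here is where the smooth approximation of the twisted conical flow is used, as the author announces, since one wants to run the scalar-curvature evolution equation $(\partial_t-\Delta_{\omega(t)})R = |\operatorname{Ric}|^2 + (\text{twisted terms})$ and apply maximum principle with test function $t\tilde R$ (or $t(\tilde R - \inf)$) on the approximating smooth flows, whose uniform estimates pass to the limit.

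Concretely I would: (i) pass to a smooth approximation $\omega_\epsilon(t)$ of \eqref{TCKRF} (replacing $|S|^{2\beta}_h$ by a regularized potential), for which the classical maximum-principle estimates of \`Sesum--Tian type hold on $X\times[0,\infty)$; (ii) establish the uniform-in-$\epsilon$ bounds $C^{-1}(e^{-t}\omega_0+\chi^*_\epsilon)\le\omega_\epsilon(t)\le C(e^{-t}\omega_0+\chi^*_\epsilon)$ and $|\partial_t\varphi_\epsilon|\le C$ (the $\epsilon$-versions of Lemma \ref{C00} and Proposition \ref{prop2}); (iii) apply the maximum principle to $t\tilde R_\epsilon + \text{(correction)}$ on $[0,1]$, using $(\partial_t-\Delta)\tilde R_\epsilon = |\tilde{\operatorname{Ric}}_\epsilon|^2_{\omega_\epsilon} + \langle\tilde{\operatorname{Ric}}_\epsilon,\frac{1}{T_{max}}\omega_0\rangle + \cdots$ and the elementary inequality $|\tilde{\operatorname{Ric}}_\epsilon|^2\ge \tilde R_\epsilon^2/n$ for the lower bound, plus a linear lower barrier $\tilde R_\epsilon\ge -C(1+\operatorname{tr}_{\omega_\epsilon}\omega_0)$ coming from the flow equation for the upper bound; (iv) on $[1,\infty)$ run the same computation without the $t$-weight, absorbing everything with the uniform metric equivalence; (v) let $\epsilon\to0$.

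\emph{Main obstacle.} The delicate point is step (iii): the localization at $t=0$, because the initial metric $\omega_0^*$ has possibly unbounded scalar curvature, so one cannot feed an initial bound on $\tilde R$ into the maximum principle — the factor $t$ (or $\min\{t,1\}$) must absorb the blow-up, and one must check that the extra terms produced by differentiating $t\tilde R_\epsilon$, namely $\tilde R_\epsilon$ itself and the cross terms with $\operatorname{tr}_{\omega_\epsilon}\omega_0$, are controlled by the quadratic good term $t|\tilde{\operatorname{Ric}}_\epsilon|^2$ after completing the square, uniformly in $\epsilon$. Keeping all constants independent of the regularization parameter, and verifying that the spatial maximum of $t\tilde R_\epsilon$ (plus the auxiliary $\lambda\log|S|^2_h$-type term ensuring the max is attained on $X\setminus D$) cannot escape to $D$, is the part that requires care; everything downstream is routine given Lemmas \ref{C0}, \ref{C00} and Proposition \ref{prop2}.
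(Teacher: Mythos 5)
Your overall architecture is right and your lower-bound half matches the paper: pass to the Campana--Guenancia--Paun regularization $\omega_\epsilon(t)$, use the evolution equation $(\partial_t-\Delta_{\omega_\epsilon(t)})\tilde R_\epsilon=|\mathrm{Ric}(\omega_\epsilon)-\frac{1}{T_{max}}\omega_0-A_\epsilon|^2+\tilde R_\epsilon$ together with $|\cdot|^2\ge\frac{1}{n}\tilde R_\epsilon^2$, and weight by $t$ (the paper uses $te^t(\tilde R_\epsilon+n)$) precisely to avoid any bound on $\tilde R_\epsilon(0)$; this is exactly the paper's Claim~1. Two small points: in the smooth approximation everything lives on the compact manifold $X$, so no auxiliary $\lambda\log|S|^2_h$ term is needed there, and the paper's input is the uniform bound on $\mathrm{tr}_{\omega_\epsilon(t)}f^*\chi$ from \cite{Ed15} rather than an $\epsilon$-uniform version of Proposition \ref{prop2}.

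The genuine gap is in your upper bound for $\tilde R_\epsilon$. A maximum principle applied directly to $t\tilde R_\epsilon$ can never produce an upper bound: at an interior space-time maximum one gets $0\le(\partial_t-\Delta)(t\tilde R_\epsilon)=\tilde R_\epsilon+t|\tilde{\mathrm{Ric}}_\epsilon|^2+t\tilde R_\epsilon$, and the quadratic term has the favorable sign only for the \emph{lower} bound; when $\tilde R_\epsilon$ is large and positive there is no contradiction. Likewise, a ``linear lower barrier $\tilde R_\epsilon\ge-C(1+\mathrm{tr}_{\omega_\epsilon}\omega_0)$'' cannot yield an upper bound. What is actually needed --- and what the paper supplies, following the Song--Tian/Z.~Zhang scheme --- is a two-step potential argument for $u_\epsilon:=\partial_t\varphi_\epsilon+\varphi_\epsilon+\delta\eta_\epsilon$: first a gradient estimate via the quantity $E_\epsilon=\frac{|\nabla u_\epsilon|^2}{(B-u_\epsilon)^s}$ (with $s\in(0,1)$) coupled with $C_1\mathrm{tr}_{\omega_\epsilon(t)}f^*\chi$, where the good term $-(s-s^2)\frac{|\nabla u_\epsilon|^4}{(B-u_\epsilon)^{s+2}}$ absorbs the bad cross terms; second, a lower bound on $\Delta_{\omega_\epsilon(t)}u_\epsilon$ obtained from the evolution of $\Delta u_\epsilon-3(|\nabla u_\epsilon|^2+C_1\mathrm{tr}\,f^*\chi)$, where $|\nabla\bar\nabla u_\epsilon|^2\ge\frac{1}{n}(\Delta u_\epsilon)^2$ now appears with the \emph{right} sign. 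Both steps are run with the $t$-weight on $[0,1]$ and without it on $[1,\infty)$, and the conclusion follows from the identity $\tilde R_\epsilon=-\mathrm{tr}_{\omega_\epsilon(t)}f^*\chi-\Delta_{\omega_\epsilon(t)}u_\epsilon$. Your first paragraph correctly identifies that the new content is a $1/t$-type control on $\Delta_{\omega(t)}\partial_t\varphi$, but your concrete steps never provide a mechanism to obtain it, so as written the upper-bound half of the lemma is not proved.
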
 

In the case $T_{max}=\infty$, the twisted term $\frac{1}{T_{max}}\omega_0$ vanishes and the twisted scalar is just the scalar curvature, which is uniformly bounded by \cite[Theorem 1.1]{Ed15} using a smooth approximation argument (this is a generalization of Song and Tian's result on the K\"ahler-Ricci flow \cite{ST16}) In fact, while our Lemma \ref{scal} only provides uniform bound for $t\ge1$, \cite[Theorem 1.1]{Ed15} says the bound for scalar curvature can be uniform for all $t\in[0,\infty)$ (i.e. $\sup_{(X\setminus D)\times[0,\infty)}|\tilde R|\le C<\infty$), where some properties of a smooth approximation for $\omega_0^*$ is involved (see discussions next to \eqref{lower2}, \eqref{P2.7} and \eqref{P2.10} for more details). Though bounding scalar curvature along the conical K\"ahler-Ricci flow has been studied in some different settings, see e.g. \cite{Ed15,No}, we still would like to provide an argument for the above Lemma \ref{scal}. Let's list in the following Remark \ref{rem_reasons} the main reasons for doing this.
\begin{rem}\label{rem_reasons}
We provide an argument for Lemma \ref{scal}, mainly due to twofold reasons: (1) we will use slightly different quantum to apply the maximum principle (see footnotes next to \eqref{footnote1} and \eqref{P2.6} for more details); (2) we can avoid using some properties of a smooth approximation $\omega_{0,\epsilon}^*$ (given in \eqref{appro}) for $\omega_0^*$, which seems needed in \cite[Theorem 1.1, Sections 4,7,8]{Ed15} and \cite[Theorem A, Sections 6,7]{No}  (see discussions next to \eqref{lower2}, \eqref{P2.7} and \eqref{P2.10} for more details).
\end{rem}

\begin{rem}\label{rem_unbdd}
It seems in general we can not improve the bound $\frac{C}{t}$ to a uniform bound $C$ near $t=0$, as the initial model conical metric $\omega^*_0$ may have unbounded scalar curvature. For example, as pointed out in \cite[Lemma 3.14]{Ru}, if $\beta\in(\frac{1}{2},1)$, the computation in \cite[Proposition A.1]{JMR} provides not only the existence of an upper bound for bisectional curvature, but also the nonexistence of a lower bound for bisectional curvature, and hence the nonexistence of a lower bound for scalar curvature.
\end{rem}

\begin{proof}[Proof of Lemma \ref{scal}]
We need to use a smooth approximation for the conical equations \eqref{TCKRF} and \eqref{TCCMA}, introduced in \cite{LZ,Wy} and also used in e.g. \cite{Sh,Ed15}. Following \cite{CGP} we define
$$\eta_\epsilon:=\beta\int_{0}^{|S|^{2}_h}\frac{(r+\epsilon^2)^\beta-\epsilon^{2\beta}}{r}dr,$$ 
and 
\begin{equation}\label{appro}
\omega^*_{0,\epsilon}:=\omega_0+\sqrt{-1}\partial\bar\partial\eta_\epsilon.
\end{equation}
Then one can approximate \eqref{TCCMA} by
\begin{equation}\label{TCCMA.1}
\left\{
\begin{aligned}
\partial_t\varphi_\epsilon(t)&=\log\frac{e^{(n-k)t}(\omega_t+\delta\sqrt{-1}\partial\bar\partial\eta_\epsilon+\sqrt{-1}\partial\bar\partial\varphi_\epsilon(t))^n}{(|S|^{2}_h+\epsilon^2)^{-(1-\beta)}\Omega}-\varphi_\epsilon(t)-\delta\eta_\epsilon\\
\varphi_\epsilon(0)&=0,
\end{aligned}
\right.
\end{equation}
and, correspondingly, if we let $\omega_\epsilon(t):=\omega_t+\delta\sqrt{-1}\partial\bar\partial\eta_\epsilon+\sqrt{-1}\partial\bar\partial\varphi_\epsilon(t)$, then we have
\begin{equation}\label{TCKRF.1}
\left\{
\begin{aligned}
\partial_t\omega_\epsilon(t)&=-Ric(\omega_\epsilon(t))-\omega_\epsilon(t)+\frac{1}{T_{max}}\omega_0+(1-\beta)(\sqrt{-1}\partial\bar\partial\log(|S|^2_h+\epsilon^2)+R_h)\\
\omega(0)&=\omega^*_{0,\epsilon},
\end{aligned}
\right.
\end{equation}
Set $A_\epsilon:=(1-\beta)(\sqrt{-1}\partial\bar\partial\log(|S|^2_h+\epsilon^2)+R_h)$, then it suffices to get a bound for $\tilde R_\epsilon:=tr_{\omega_\epsilon(t)}(Ric(\omega_\epsilon(t))-\frac{1}{T_{max}}\omega_0-A_\epsilon)$. Let's first look at the easier part, i.e. lower bound.\\
\textbf{Claim 1}: For any $\epsilon>0$ and $t>0$, 
\begin{equation}\label{lower}
\inf_{X}\tilde R_\epsilon\ge -\frac{3n}{\min\{t,1\}}
\end{equation}
\begin{proof}[Proof of Claim 1]
First we have the evolution of $\tilde R_\epsilon$ along \eqref{TCKRF.1} (see e.g. \cite[Proposition 4.1]{Ed15}):
\begin{equation}\label{lower1}
(\partial_t-\Delta_{\omega_\epsilon(t)})\tilde R_\epsilon=|Ric(\omega_\epsilon(t))-\frac{1}{T_{max}}\omega_0-A_\epsilon|_{\omega_{\epsilon}(t)}^2+\tilde R_\epsilon,
\end{equation}
which, combining with $|Ric(\omega_\epsilon(t))-\frac{1}{T_{max}}\omega_0-A_\epsilon|_{\omega_{\epsilon}(t)}^2\ge\frac{1}{n}\tilde R_\epsilon^2$, implies
\begin{equation}\label{lower2}
(\partial_t-\Delta_{\omega_\epsilon(t)})(e^t(\tilde R_\epsilon+n))\ge \frac{1}{n}e^t(\tilde R_\epsilon+n)^2.
\end{equation}
To avoid involving the uniform bound for $\tilde R_\epsilon(0)$, we use a trick similar to e.g. \cite[Lemma 3.2]{ST17} and further consider
\begin{equation}\label{lower3}
(\partial_t-\Delta_{\omega_\epsilon(t)})(te^t(\tilde R_\epsilon+n))\ge \frac{1}{n}te^t(\tilde R_\epsilon+n)^2+e^t(\tilde R_\epsilon+n)
\end{equation}
We now apply the maximum principle in \eqref{lower3}. Assume $te^t(\tilde R_\epsilon+n)$ achieves the minimal value on $X\times[0,1]$ at $(x',t')\in X\times[0,1]$. If $t'=0$, $(te^t(\tilde R_\epsilon+n))(x',t')=0$ is uniformly bounded from below; otherwise $t'>0$, then by \eqref{lower3} we know, at $(x',t')$,
\begin{equation}\label{lower4.1}
t(\tilde R_\epsilon+n)^2\le -n(\tilde R_\epsilon+n),
\end{equation}
which in particular implies $(\tilde R_\epsilon+n)(x',t')\le0$. We may assume $$(\tilde R_\epsilon+n)(x',t')<0$$ (otherwise we are done), and so by \eqref{lower4.1} we find
\begin{equation}\label{lower5}
t(\tilde R_\epsilon+n)\ge -n,
\end{equation}
and so $(te^t(\tilde R_\epsilon+n))(x',t')\ge-en$. In conclusion, we have 
\begin{equation}\label{lower5.1}
\inf_{X\times[0,1]}(te^t(\tilde R_\epsilon+n))\ge -en,
\end{equation}
which implies
\begin{equation}\label{lower5.2}
\inf_{X\times[0,1]}(t(\tilde R_\epsilon+n))\ge -en,
\end{equation}
and so 
$\inf_X(\tilde R_\epsilon+n))(1)\ge-n$. Now, back to \eqref{lower2}, we easily find that for any $t\ge1$, 
$$\inf_X(e^t(\tilde R_\epsilon+n))(t)\ge\inf_X(e^t(\tilde R_\epsilon+n))(1)\ge-en,$$
and 
\begin{equation}\label{lower5.3}
\inf_{X\times[1,\infty)}(\tilde R_\epsilon+n)\ge-n.
\end{equation}
Combining \eqref{lower5.2} and \eqref{lower5.3}, Claim 1 is proved.
\end{proof}

Next we try to get an upper bound for $\tilde R_\epsilon$. 
Recall that \cite[Propositions 3.1,3.2,6.1]{Ed15} provides a constant $C\ge1$ such that for any small $\epsilon>0$, there holds
\begin{equation}
\sup_{X\times[0,\infty)}(|\varphi_\epsilon|+|\partial_t\varphi_\epsilon|+tr_{\omega_\epsilon(t)}f^*\chi)\le C.
\end{equation}
It suffices to remark that the twisted term $\frac{1}{T_{max}}\omega_0$ is smooth and semipositive and hence does not cause any trouble.

We also recall some useful inequalities. Since $\chi$ is a K\"ahler metric on $Y$, which in particular has bounded bisectional curvature, by arguments similar to Lemma \ref{C2.1} (or see \cite[Section 6]{Ed15}) we have a constant $C\ge1$ such that for all positive $\epsilon$,
\begin{align}\label{P2.2}
&(\partial_t-\Delta_{\omega_\epsilon(t)})tr_{\omega_\epsilon(t)}f^*\chi\le C-C^{-1}|\nabla tr_{\omega_\epsilon(t)} f^*\chi|_{\omega_\epsilon(t)}^2,
\end{align}
where we have used $A_\epsilon\ge-Cf^*\chi$ and $\frac{1}{T_{max}}\omega_0\ge0$.

Set $u_\epsilon:=\partial_t\varphi_\epsilon+\varphi_\epsilon+\delta\eta_\epsilon$. Then by a direct computation we have, on $X\times[0,\infty)$,
\begin{equation}
(\partial_t-\Delta)u_\epsilon=tr_{\omega_{\epsilon}(t)}f^*\chi-k\le C.
\end{equation}
To simplify the notation we set $|\cdot|:=|\cdot|_{\omega_\epsilon(t)}$.

We first bound $|\nabla u_\epsilon|$. A direct computation gives
\begin{align}
&(\partial_t-\Delta)|\nabla u_\epsilon|^2\nonumber\\
&=|\nabla u_\epsilon|^2+2Re\langle\nabla tr_{\omega_\epsilon(t)} f^*\chi,\overline\nabla u_\epsilon\rangle_{\omega_\epsilon(t)}-|\nabla\nabla u_\epsilon|^2-|\nabla\overline\nabla u_\epsilon|^2-(\frac{1}{T_{max}}\omega_0+A_\epsilon)(\nabla u_\epsilon,\overline\nabla u_\epsilon),
\end{align}
which, combining with $A_\epsilon\ge-Cf^*\chi$ and $\frac{1}{T_{max}}\omega_0\ge0$, in particular implies that
\begin{align}\label{P2.2.3}
(\partial_t-\Delta)|\nabla u_\epsilon|^2\le-|\nabla\nabla u_\epsilon|^2-|\nabla\overline\nabla u_\epsilon|^2+C|\nabla u_\epsilon|^2+|\nabla tr_{\omega_\epsilon(t)} f^*\chi|^2.
\end{align}

Now we fix a sufficiently large constant $B>4$ such that
\begin{equation}
2|u_\epsilon|\le B,
\end{equation}
and so,
\begin{equation}\label{footnote1}
2<\frac{B}{2}\le B-u_\epsilon\le2B.
\end{equation}
Fix a constant $s\in(0,1)$ and set\footnote{We point out that in previous works on bounding scalar curvature along K\"ahler-Ricci flow (see \cite{SeT,ST16,Zo,Ed15}), people usually choose $s=1$ to define this quantity $E$. Here we choose $s<1$ to slightly simplify some arguments.} $E_\epsilon:=\frac{|\nabla u_\epsilon|^2}{(B-u_\epsilon)^s}$. Compute
\begin{align}\label{P2.4}
&(\partial_t-\Delta_{\omega_\epsilon(t)})E_\epsilon\nonumber\\
&=\frac{(\partial_t-\Delta_{\omega_\epsilon(t)})|\nabla u_\epsilon|^2}{(B-u_\epsilon)^s}+\frac{s|\nabla u_\epsilon|^2(\partial_t-\Delta_{\omega_\epsilon(t)}) u_\epsilon}{(B-u_\epsilon)^{s+1}}-\frac{2sRe\langle\nabla |\nabla u_\epsilon|^2,\nabla u_\epsilon\rangle_{\omega_\epsilon(t)}}{(B-u_\epsilon)^{s+1}}-\frac{s(s+1)|\nabla u_\epsilon|^4}{(B-u_\epsilon)^{s+2}}\nonumber\\
&\le\frac{-|\nabla\nabla u_\epsilon|^2-|\nabla\overline\nabla u_\epsilon|^2+2|\nabla u_\epsilon|^2+|\nabla tr_{\omega_\epsilon(t)} f^*\chi|^2}{(B-u_\epsilon)^s}+\frac{sC|\nabla u_\epsilon|^2}{(B-u_\epsilon)^{s+1}}\nonumber\\
&+\frac{2s|\langle\nabla |\nabla u_\epsilon|^2,\nabla u_\epsilon\rangle_{\omega_\epsilon(t)}|}{(B-u_\epsilon)^{s+1}}-\frac{s(s+1)|\nabla u_\epsilon|^4}{(B-u_\epsilon)^{s+2}}.
\end{align}
Note that
\begin{align}\label{P2.5}
\frac{2s|\langle\nabla |\nabla u_\epsilon|^2,\nabla u_\epsilon\rangle_{\omega_\epsilon(t)|}}{(B-u_\epsilon)^{s+1}}&\le\frac{2s|\nabla u_\epsilon|^2(|\nabla\nabla u_\epsilon|+|\nabla\bar\nabla uz-\epsilon|)}{(B-u_\epsilon)^{s+1}}\nonumber\\
&\le s\left(\frac{2(|\nabla\nabla u_\epsilon|^2+|\nabla\overline\nabla u_\epsilon|^2)}{2s(B-u_\epsilon)^s}+\frac{2s|\nabla u_\epsilon|^4}{(B-u_\epsilon)^{s+2}}\right)\nonumber\\
&=\frac{|\nabla\nabla u_\epsilon|^2+|\nabla\overline\nabla u_\epsilon|^2}{(B-u_\epsilon)^s}+\frac{2s^2|\nabla u_\epsilon|^4}{(B-u_\epsilon)^{s+2}}.
\end{align}
Putting \eqref{P2.5} into \eqref{P2.4} gives\footnote{Note that choosing the positive number $s$ to be in $(0,1)$ helps to make sure the factor $s-s^2$ appeared in \eqref{P2.6} is positive and avoid an additional trick used in \cite{ST16,Ed15}.}
\begin{align}\label{P2.6}
(\partial_t-\Delta_{\omega_\epsilon(t)})E_\epsilon&\le C|\nabla u_\epsilon|^2+|\nabla tr_{\omega_\epsilon(t)} f^*\chi|^2-(B-u_\epsilon)^{-(s+2)}(s-s^2)|\nabla u_\epsilon|^4\nonumber\\
&\le C|\nabla u_\epsilon|^2+|\nabla tr_{\omega_\epsilon(t)} f^*\chi|^2-(2B)^{-(s+2)}(s-s^2)|\nabla u_\epsilon|^4
\end{align}
Now consider $F_\epsilon:=E_\epsilon+C_1tr_{\omega_\epsilon(t)} f^*\chi$. Combining \eqref{P2.2} and \eqref{P2.6} gives
\begin{equation}\label{P2.7}
(\partial_t-\Delta_{\omega_\epsilon(t)})F_\epsilon\le C+C|\nabla u_\epsilon|^2-(2B)^{-(s+2)}(s-s^2)|\nabla u_\epsilon|^4.
\end{equation}
To get an upper bound for $F_\epsilon$, one may like to apply the maximum principle in $\eqref{P2.7}$ (see last step in \cite[Section 7]{Ed15} or \cite[Section 6]{No}). Note that, arguing in such way, since the maximal value may be achieved at $t=0$, one may need a uniform bound for $|\nabla u_{\epsilon}|(0)=|\nabla\log\frac{(\omega_{0,\epsilon}^*)^n}{(|S|^2_h+\epsilon^2)^{-(1-\beta)}\Omega}|$. 
To avoid involving such bound, here we use a trick similar to e.g. \cite[Lemma 3.2]{ST17} and further consider
\begin{align}\label{P2.8}
(\partial_t-\Delta_{\omega_\epsilon(t)})(tF_\epsilon)&\le F_{\epsilon}+t(C+C|\nabla u_\epsilon|^2-(2B)^{-(s+2)}(s-s^2)|\nabla u_\epsilon|^4)\nonumber\\
&\le C(t+1)+C(t+1)|\nabla u_\epsilon|^2-t(2B)^{-(s+2)}(s-s^2)|\nabla u_\epsilon|^4.
\end{align}
Assume the maximal value of $tF_\epsilon$ on $X\times[0,1]$ is achieved at $(x',t')\in X\times[0,1]$. If $t'=0$, $(tF_\epsilon)(x',t')=0$ is bounded from above; otherwise $t'>0$, by \eqref{P2.8} we find a constant $C\ge1$ independent on $\epsilon$ such that, at $(x',t')$, 
$$t|\nabla u_\epsilon|^4\le C(t+1)|\nabla u_\epsilon|^2+C(t+1).$$
Note that $t'\in(0,1]$. We have, at $(x',t')$,
\begin{align}
(t|\nabla u_\epsilon|^2)^2&\le Ct(t+1)+Ct(t+1)|\nabla u_\epsilon|^2\nonumber\\
&\le C+C(t|\nabla u_\epsilon|^2)
\end{align}
and so
$$(t|\nabla u_\epsilon|^2)(x',t')\le C.$$
Plugging into $tF_\epsilon$ we find
$$(tF_\epsilon)(x',t')=\left(\frac{t|\nabla u_\epsilon|^2}{(B-u_\epsilon)^s}+tC_1tr_{\omega_\epsilon(t)}f^*\chi\right)(x',t')\le C.$$
Therefore, we have find a constant $C\ge1$ independent on $\epsilon$ such that
\begin{equation}\label{upper11}
\sup_{X\times[0,1]}(tF_\epsilon)\le C,
\end{equation}
which implies
\begin{equation}\label{upper11.1}
\sup_{X\times[0,1]}(t|\nabla u_\epsilon|^2)\le C.
\end{equation}

In particular, we have a constant $C\ge1$ independent on $\epsilon$ such that
\begin{equation}\label{upper1}
\sup_{X}F_\epsilon(1)\le C.
\end{equation}
Having \eqref{upper1}, we can now apply the maximum principle in \eqref{P2.7} to conclude that
\begin{equation}
\sup_{X\times[1,\infty)}F_{\epsilon}\le C\nonumber
\end{equation}
and so
\begin{equation}\label{upper2}
\sup_{X\times[1,\infty)}|\nabla u_{\epsilon}|^2\le C
\end{equation}
for some constant $C\ge1$ independent on $\epsilon$.

Combining \eqref{P2.2}, \eqref{P2.2.3} and \eqref{upper2}, we have constants $C_1,C\ge1$ independent on $\epsilon$ such that
\begin{equation}\label{P2.4.0}
(\partial_t-\Delta_{\omega_\epsilon(t)})(t(|\nabla u_\epsilon|^2+C_1 tr_{\omega_\epsilon(t)} f^*\chi))\le-t|\nabla\nabla u_\epsilon|^2-t|\nabla\overline\nabla u_\epsilon|^2+C.
\end{equation}
on $(X\setminus D)\times[0,1]$, and
\begin{equation}\label{P2.4.1}
(\partial_t-\Delta_{\omega_\epsilon(t)})(|\nabla u_\epsilon|^2+C_1 tr_{\omega_\epsilon(t)} f^*\chi)\le-|\nabla\nabla u_\epsilon|^2-|\nabla\overline\nabla u_\epsilon|^2+C.
\end{equation}
on $(X\setminus D)\times[1,\infty)$.

Next we try to bound $-\Delta_{\omega_\epsilon(t)}$ from above. Recall that
\begin{equation}\label{P2.8.1}
(\partial_t-\Delta_{\omega_\epsilon(t)})\Delta_{\omega_\epsilon(t)} u_\epsilon=\Delta_{\omega_\epsilon(t)} u_\epsilon+\langle Ric(\omega_\epsilon(t))-\frac{1}{T_{max}}\omega_0-A_\epsilon,\sqrt{-1}\partial\bar\partial u_\epsilon\rangle_{\omega_\epsilon}+\Delta_{\omega_\epsilon(t)} tr_{\omega_\epsilon(t)}f^*\chi.
\end{equation}
Using
\begin{equation}\label{eq1}
\sqrt{-1}\partial \bar\partial u_\epsilon=-Ric(\omega_\epsilon)-f^*\chi+\frac{1}{T_{max}}\omega_0+A_\epsilon,
\end{equation}
we see that the second term in \eqref{P2.8.1}
\begin{align}\label{P2.9}
\langle Ric(\omega_\epsilon(t))-\frac{1}{T_{max}}\omega_0-A_\epsilon,\sqrt{-1}\partial\bar\partial u_\epsilon\rangle_{\omega_\epsilon(t)}&=\langle-\sqrt{-1}\partial\bar\partial u_\epsilon-f^*\chi,\sqrt{-1}\partial\bar\partial u_\epsilon\rangle_{\omega_\epsilon(t)}\nonumber\\
&=-|\nabla\bar\nabla u_\epsilon|^2-\langle f^*\chi,\sqrt{-1}\partial\bar\partial u_\epsilon\rangle_{\omega_\epsilon(t)}\nonumber\\
&\ge-\frac{3}{2}|\nabla\bar\nabla u_\epsilon|^2-\frac{1}{2}|f^*\chi|^2\nonumber\\
&\ge-\frac{3}{2}|\nabla\bar\nabla u_\epsilon|^2-\frac{n}{2}(tr_{\omega_\epsilon(t)} f^*\chi)^2\nonumber\\
&\ge-\frac{3}{2}|\nabla\bar\nabla u_\epsilon|^2-C,
\end{align}
and the third term in \eqref{P2.8.1}
\begin{align}
\Delta_{\omega_\epsilon(t)} tr_{\omega_\epsilon(t)} f^*\chi&=-(\partial_t-\Delta_{\omega_\epsilon(t)})tr_{\omega_\epsilon(t)} f^*\chi+\partial_t tr_{\omega_\epsilon(t)} f^*\chi\nonumber\\
&\ge -C+C^{-1}_1|\nabla tr_{\omega_\epsilon(t)} f^*\chi|^2+\langle Ric(\omega_\epsilon(t))-\frac{1}{T_{max}}\omega_0-A_{\epsilon},f^*\chi\rangle_{\omega_\epsilon(t)}+tr_{\omega_\epsilon(t)} f^*\chi\nonumber\\
&\ge-C+\langle Ric(\omega_\epsilon(t))-\frac{1}{T_{max}}\omega_0-A_{\epsilon},f^*\chi\rangle_{\omega_\epsilon(t)}\nonumber\\
&=-C+\langle-\sqrt{-1}\partial\bar\partial u_\epsilon-f^*\chi,f^*\chi\rangle_{\omega_\epsilon(t)}\nonumber\\
&\ge-C-\frac{1}{2}|\nabla \overline\nabla u_\epsilon|^2-\frac{3}{2}|f^*\chi|^2\nonumber\\
&\ge-C-\frac{1}{2}|\nabla \overline\nabla u_\epsilon|^2.
\end{align}
Then we arrive at
\begin{align}
(\partial_t-\Delta_{\omega_\epsilon(t)})\Delta_{\omega_\epsilon(t)} u_\epsilon\ge\Delta_{\omega_\epsilon(t)} u_\epsilon-2|\nabla \overline\nabla u_\epsilon|^2-C.
\end{align}
Using \eqref{P2.4.0} and \eqref{P2.4.1}, we see
\begin{align}\label{P2.10.0}
&(\partial_t-\Delta_{\omega_\epsilon(t)})\left(t(\Delta_{\omega_\epsilon(t)} u_\epsilon-3(|\nabla u_\epsilon|^2+C_1 tr_{\omega_\epsilon(t)} f^*\chi))\right)\nonumber\\
&\ge (t+1)\Delta_{\omega_\epsilon(t)} u_\epsilon+t|\nabla\overline\nabla u_\epsilon|^2-C\nonumber\\
&\ge (t+1)\Delta_{\omega_\epsilon(t)} u_\epsilon+\frac{1}{n}t(\Delta_{\omega_\epsilon(t)} u_\epsilon)^2-C.
\end{align}
on $X\times[0,1]$, and 
\begin{align}\label{P2.10}
&(\partial_t-\Delta_{\omega_\epsilon(t)})\left(\Delta_{\omega_\epsilon(t)} u_\epsilon-3(|\nabla u_\epsilon|^2+C_1 tr_{\omega_\epsilon(t)} f^*\chi)\right)\nonumber\\
&\ge\Delta_{\omega_\epsilon(t)} u_\epsilon+|\nabla\overline\nabla u_\epsilon|^2-C\nonumber\\
&\ge\Delta_{\omega_\epsilon(t)} u_\epsilon+\frac{1}{n}(\Delta_{\omega_\epsilon(t)} u_\epsilon)^2-C
\end{align}
on $X\times[1,\infty)$.
To bound $\Delta_{\omega_\epsilon(t)}u_\epsilon$ from below (see \cite[Section 8]{Ed15} or \cite[Section 7]{No} for related discussions), we also try to avoid using a uniform bound for $(\Delta_{\omega_\epsilon(t)}u_\epsilon)(0)=\Delta_{\omega_\epsilon(t)}\log\frac{(\omega_{0,\epsilon}^*)^n}{(|S|_h^2+\epsilon^2)^{-(1-\beta)}\Omega}$. For convenience, we set $H_\epsilon:=t(\Delta_{\omega_\epsilon(t)} u_\epsilon-3(|\nabla u_\epsilon|^2+C_1 tr_{\omega_\epsilon(t)} f^*\chi))$, which by \eqref{P2.10.0} satisfies
\begin{align}\label{P2.10.1}
&(\partial_t-\Delta_{\omega_\epsilon(t)})\left(t(\Delta_{\omega_\epsilon(t)} u_\epsilon-3(|\nabla u_\epsilon|^2+C_1 tr_{\omega_\epsilon(t)} f^*\chi))\right)\nonumber\\
&\ge (t+1)\Delta_{\omega_\epsilon(t)} u_\epsilon+\frac{1}{n}t(\Delta_{\omega_\epsilon(t)} u_\epsilon)^2-C
\end{align}
$X\times[0,1]$.
We assume the minimal value of $H_\epsilon$ on $X\times[0,1]$ is achieved at $(x',t')\in X\times[0,1]$. If $t'=0$, $H_\epsilon(x',t')=0$ is bounded from below; otherwise $t'>0$, then by maximum principle in \eqref{P2.10.1} we find, at $(x',t')$,
$$t(\Delta_{\omega_\epsilon(t)}u_\epsilon)^2\le-(t+1)\Delta_{\omega_\epsilon(t)}u_\epsilon+C,$$
and so 
$$(t\Delta_{\omega_\epsilon(t)}u_\epsilon)^2\le-(t+1)(t\Delta_{\omega_\epsilon(t)}u_\epsilon)+Ct.$$
We may assume $(\Delta_{\omega_\epsilon(t)}u_\epsilon)(x',t')<0$ (otherwise we are done); then by noting that $t\in(0,1]$ we have
$$(t\Delta_{\omega_\epsilon(t)}u_\epsilon)^2\le-2(t\Delta_{\omega_\epsilon(t)}u_\epsilon)+C,$$
from which we conclude that
$$(t\Delta_{\omega_\epsilon(t)}u_\epsilon)(x',t')\ge -C.$$
Plugging into $H_\epsilon$ gives
$$H_\epsilon(x',t')=\left(t(\Delta_{\omega_\epsilon(t)} u_\epsilon-3(|\nabla u_\epsilon|^2+C_1 tr_{\omega_\epsilon(t)} f^*\chi))\right)(x',t')\ge-C.$$
Therefore, we can choose a constant $C\ge1$ independent on $\epsilon$ such that
$$\inf_{X\times[0,1]}H_\epsilon\ge-C,$$
which, combining with \eqref{upper11.1}, implies
\begin{equation}\label{lower3.1}
\inf_{X\times[0,1]}(t\Delta_{\omega_\epsilon(t)}u_\epsilon)\ge-C.
\end{equation}
In particular,
\begin{equation}\label{lower4}
\inf_X(\Delta_{\omega_\epsilon(t)} u_\epsilon-3(|\nabla u_\epsilon|^2+C_1 tr_{\omega_\epsilon(t)} f^*\chi))(1)\ge -C.
\end{equation}

Having \eqref{lower4}, we can easily apply the maximum principle in \eqref{P2.10} to find a constant $C\ge1$ independent on $\epsilon$ such that
\begin{equation}\label{lower5}
\inf_{X\times[1,\infty)}\Delta_{\omega_\epsilon(t)}u_\epsilon\ge-C.
\end{equation}
Combining \eqref{lower3.1} and \eqref{lower5} gives, for any $t>0$,
\begin{equation}\label{lower5.1}
\inf_{X}\Delta_{\omega_\epsilon(t)}u_\epsilon\ge-\frac{C}{\min\{t,1\}}.
\end{equation}
Finally, recall from \eqref{eq1} that
\begin{equation}
\tilde R_\epsilon=-tr_{\omega_\epsilon(t)}f^*\chi-\Delta_{\omega_\epsilon(t)}u_\epsilon.
\end{equation}
Therefore, by \eqref{lower5.1} there exists a constant $C\ge1$ independent on $\epsilon$ such that for any $t>0$
\begin{equation}\label{lower6}
\sup_{X}\tilde R_\epsilon\le \frac{C}{\min\{t,1\}}.
\end{equation}

Combining \eqref{lower} and \eqref{lower6} gives a constant $C\ge1$ independent on $\epsilon$ such that for any $t>0$,
\begin{equation}\label{scal1}
\sup_{X}|\tilde R_\epsilon|\le \frac{C}{\min\{t,1\}}.
\end{equation}
By letting $\epsilon\to0$, Lemma \ref{scal} is proved.

\end{proof}
\subsection{Proof of Theorem \ref{scal_thm}}\label{scal.1} In this subsection, we explain how to apply the above arguments for Lemma \ref{scal} to prove Theorem \ref{scal_thm}.
\begin{proof}[Proof of Theorem \ref{scal_thm}]
It suffices to prove the conclusion in Theorem \ref{scal_thm} for the following normalized version of \eqref{CKRF}:
\begin{equation}\label{NCKRF.1}
\left\{
\begin{aligned}
\partial_t\omega(t)&=-Ric(\omega(t))-\omega(t)+2\pi(1-\beta)[D]\\
\omega(0)&=\omega_0^*,
\end{aligned}
\right.
\end{equation}
here, without loss of any generality we assume there is only one irreducible component $D$ in the cone divisor. We have the smooth approximation \eqref{TCKRF.1} and \eqref{TCCMA.1} as before. For the fixed $t_0\in(0,T_{max})$, we fix a K\"ahler metric $\hat\omega_{t_0}\in e^{-t_0}[\omega_0]+(1-e^{-t_0})2\pi(c_1(K_X)+(1-\beta)[D])$. Also note that $\varphi_\epsilon$ and $\partial_t\varphi_\epsilon$ are uniformly bounded on $X\times[0,t_0]$ (see \cite{Sh,Wy}). In the above arguments for Lemma \ref{scal}, we replace $Y$ by $X$, $f$ by the identity map $X\to X$ and $\chi$ by $\hat\omega_{t_0}$, and then those arguments apply and Theorem \ref{scal_thm} follows.
\end{proof}

\section{Local $C^0$-convergence away from cone divisor}\label{sect_C0}
In this section, we will prove $C^0$-convergence away from cone divisor $D$. The strategy used here is taken from Tosatti-Weinkove-Yang \cite{TWY}.
\par In this section, similar to discussions in Sections \ref{weak_conv} and \ref{equiv}, we will directly work with the conical equations without passing to a smooth approximation. Let's begin with the following

\begin{lem}\label{prop_conv.1}
There exists a constant $C\ge1$ such that for any $t\in[1,\infty)$,
\begin{equation}\label{conv}
\sup_{X\setminus D}|\partial_t\varphi+\varphi(t)+\delta|S|^{2\beta}_h-f^*\psi|\le Ce^{-\frac{1}{4}t}.
\end{equation}
\end{lem}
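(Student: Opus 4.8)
The plan is to combine the exponential decay already established at the level of K\"ahler potentials in Proposition~\ref{prop_conv} with the evolution equation for $\partial_t\varphi$, exploiting the fact that the combination $\partial_t\varphi + \varphi + \delta|S|^{2\beta}_h$ appears naturally inside the Monge-Amp\`ere operator. Recall that $u := \partial_t\varphi + \varphi + \delta|S|^{2\beta}_h$ is (up to the factor $e^{(n-k)t}$) the logarithm of the volume ratio $\frac{\omega(t)^n}{|S|^{-2(1-\beta)}_h\Omega}$, and by \eqref{limit} the quantity $f^*\psi$ satisfies an analogous identity with $\omega(t)$ replaced by $f^*\overline\chi$ (and with $\overline\omega_0$ in place of the fiber volume). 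So the idea is to differentiate the equation \eqref{ineq9} for $V = \varphi + \delta|S|^{2\beta}_h - f^*\psi - e^{-t}\rho$ in time: setting $W := \partial_t V$, one gets a heat-type equation
\begin{equation}\label{W.eq}
(\partial_t - \Delta_{\omega(t)})W = -W - \langle \sqrt{-1}\partial\bar\partial(\tfrac{\partial}{\partial t}(\text{background}))\,,\,\cdot\,\rangle_{\omega(t)} + (\text{terms decaying like } e^{-t}),
\end{equation}
where the background terms $e^{-t}\overline\omega_0 - e^{-t}f^*\chi$ contribute $O(e^{-t})$ after tracing against $\omega(t)$ (using the uniform metric equivalence $\omega(t) \asymp e^{-t}\omega_0 + f^*\chi^*$ from Proposition~\ref{prop2}, which bounds $tr_{\omega(t)}(e^{-t}\omega_0)$ and $tr_{\omega(t)}f^*\chi$). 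Since $\partial_t\varphi + \varphi + \delta|S|^{2\beta}_h - f^*\psi = W + \partial_t(e^{-t}\rho) + V + e^{-t}\rho = W + V + O(e^{-t})$ and $V = O(e^{-3t/4})$ by Proposition~\ref{prop_conv}, it suffices to show $\sup_{X\setminus D}|W| \le Ce^{-t/4}$.

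The main step is a maximum principle argument for $W$ using the usual auxiliary perturbation $W_\lambda := W \pm e^{-t}\lambda\log|S|^2_h$ (as in the proofs of Lemma~\ref{C0}, Lemma~\ref{C00} and Proposition~\ref{prop_conv}) to guarantee the extremum is attained on $X\setminus D$, after which one lets $\lambda \to 0$. From \eqref{W.eq} one obtains at a spatial maximum of $W_\lambda$ a differential inequality of the shape $\partial_t W_{\lambda,\max} \le -W_{\lambda,\max} + Ce^{-t}$ (the $-\Delta_{\omega(t)}W$ term has the right sign, the coupling term $-\langle\cdot,\cdot\rangle_{\omega(t)}$ against $\sqrt{-1}\partial\bar\partial$ of an $O(e^{-t})$ background form is $O(e^{-t})$ by Proposition~\ref{prop2}, and the forcing is $O(e^{-t})$). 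This gives $\partial_t(e^t W_{\lambda,\max}) \le C$, hence $W_{\lambda,\max}(t) \le e^{-t}(W_{\lambda,\max}(1)e + C(t-1)) \le Ce^{-3t/4}$ for $t\ge1$; the lower bound is symmetric, possibly after fixing a large $T_1$ so that the relevant background combinations are positive, exactly as in Proposition~\ref{prop_conv}. Note $W(1)$ is controlled because $\partial_t\varphi$ is bounded on $[0,\infty)$ by Lemma~\ref{C00} and $\varphi$ by Lemma~\ref{C0}, so $W$ is bounded on any $[0,T]$.

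The main obstacle I anticipate is making the differentiation of the conical equation rigorous: $W = \partial_t V$ involves $\partial_t^2\varphi$, and to run the maximum principle one needs enough regularity of $\partial_t\varphi$ on $X\setminus D$, plus control (at least locally, or via the $\lambda\log|S|^2_h$ trick globally) near $D$. Since the paper consistently works directly on the conical equation on $(X\setminus D)\times[0,\infty)$ where $\varphi \in C^\infty$, this should be routine, but the constant in the final bound is $e^{-t/4}$ rather than $e^{-3t/4}$, suggesting the author may instead avoid differentiating and argue more cleverly — e.g. by writing $\partial_t\varphi + \varphi + \delta|S|^{2\beta}_h - f^*\psi = \log\frac{e^{(n-k)t}\omega(t)^n}{C_k^n\overline\omega_0^{n-k}\wedge f^*\overline\chi^k} + (\text{bounded background error})$ and then using $\log(1+x)\le x$ together with the metric equivalence $\omega(t)\asymp e^{-t}\omega_0+f^*\chi^*$ and the potential decay $V=O(e^{-3t/4})$ to control $\omega(t)^n$ versus $(e^{-t}\overline\omega_0+f^*\overline\chi+\sqrt{-1}\partial\bar\partial V)^n = C_k^n\overline\omega_0^{n-k}\wedge f^*\overline\chi^k\,e^{V+\partial_tV}(1+O(e^{-t}))$, so that $|\partial_t\varphi+\varphi+\delta|S|^{2\beta}_h-f^*\psi|$ is bounded by $|V|+|\partial_t V|+O(e^{-t})$; the factor $\frac14$ would then come from interpolating the $C^0$-decay of $V$ against the boundedness of its derivative (a standard parabolic interpolation, gaining only $e^{-t/4}$ instead of $e^{-3t/4}$). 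I would present the argument along these lines, deferring the higher-order parabolic estimates needed for the interpolation to the local smoothing estimates available on $X\setminus D$.
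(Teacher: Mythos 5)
Your preferred (second) route is in fact the one the paper takes: since $\partial_t\varphi+\varphi+\delta|S|^{2\beta}_h-f^*\psi=V+\partial_tV$ with $V=O(e^{-\frac{3}{4}t})$ by Proposition \ref{prop_conv}, everything reduces to showing $\sup_{X\setminus D}|\partial_t\varphi|\le Ce^{-\frac{1}{4}t}$, and this is obtained by exactly the interpolation you describe, implemented as a Tosatti--Weinkove--Yang contradiction argument with the $\lambda\log|S|^2_h$ perturbation. However, there is a genuine gap in how you propose to justify the interpolation. Deducing decay of $\partial_t\varphi$ from decay of $V$ requires a bound on the \emph{second} time derivative $\partial_t^2\varphi$ (decay of a function plus boundedness of its first derivative alone gives nothing), and this bound must hold uniformly over all of $X\setminus D$ for $t\ge1$, since the supremum in \eqref{conv} is global. ``Local smoothing estimates on $X\setminus D$'' cannot deliver this: interior parabolic estimates degenerate as one approaches $D$. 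The paper's actual source is the identity $\partial_t(\partial_t\varphi)=\partial_t\varphi-\tilde R-k$ combined with the twisted scalar curvature bound of Lemma \ref{scal}, which yields $\sup_{(X\setminus D)\times[1,\infty)}|\partial_t(\partial_t\varphi)|\le C_0$; this is the essential nontrivial input your proposal omits, and it is the reason the lemma is placed after Section \ref{sect_scal}. With it, the contradiction argument (taking $\theta_k\sim e^{-t_k/4}$, integrating $\partial_t\varphi$ over $[t_k,t_k+\theta_k]$, and comparing against the $O(e^{-\frac{3}{4}t_k})$ oscillation of $\varphi+\delta|S|^{2\beta}_h-f^*\psi$) closes and produces the rate $e^{-\frac{1}{4}t}$.

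Your first route also does not work as stated. Differentiating \eqref{ineq9} gives $(\partial_t-\Delta_{\omega(t)})W=-W+(n-k)-e^{-t}tr_{\omega(t)}(\overline\omega_0-f^*\chi)$, and while $e^{-t}tr_{\omega(t)}f^*\chi=O(e^{-t})$ by Lemma \ref{C2.1}, the term $e^{-t}tr_{\omega(t)}\overline\omega_0$ is only $O(1)$: by Proposition \ref{prop2} the fiber directions of $\omega(t)$ are themselves of size $e^{-t}$, so tracing $e^{-t}\overline\omega_0$ against $\omega(t)$ contributes an amount comparable to $n-k$, not to $e^{-t}$. Hence the forcing in your heat equation is merely bounded, the maximum principle yields only $W=O(1)$, and the claimed $O(e^{-\frac{3}{4}t})$ bound for $W$ does not follow. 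Showing that $(n-k)-e^{-t}tr_{\omega(t)}\overline\omega_0$ decays exponentially is essentially equivalent to the higher-order convergence results established only later in Section \ref{sect_C0}, so this route is circular at the point where the lemma is needed.
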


\begin{proof}
Recall Proposition \ref{prop_conv}: there exists a contant $\tilde C$ such that for any $t\in[0,\infty)$
\begin{equation}\label{local0}
\sup_{X\setminus D}|\varphi+\delta|S|^{2\beta}_h-f^*\psi|\le \tilde Ce^{-\frac{1}{4}t}.
\end{equation}
Also recall
\begin{equation}
\partial_t(\partial_t\varphi)=\partial_t\varphi-\tilde R-k.
\end{equation}\label{local2}
So, by Lemma \ref{scal} we can fix a constant $C_0\ge1$ such that $\sup_{(X\setminus D)\times[1,\infty)}|\partial_t(\partial_t\varphi)|\le C_0$. We set $C_1:=\sqrt{8\tilde CC_0+1}$. It suffices to show
\begin{equation}\label{local2.1}
\sup_{X\setminus D}|\partial_t\varphi|\le C_1e^{-\frac{1}{4}t}.
\end{equation}
To this end, we claim that\\
\textbf{Claim} For any $\lambda\in(0,\lambda_0]$ and $t\in[1,\infty)$,
\begin{equation}\label{local3}
\sup_{X\setminus D}(\partial_t\varphi+\lambda\log|S|^2_h)\le C_1e^{-\frac{1}{4}t}.
\end{equation}
\begin{proof}[Proof of Claim]
We make use of an argument in \cite{TWY} to conclude. Suppose there exists a sequence $(x_k,t_k)\in (X\setminus D)\times[1,\infty)$ with $t_k\to\infty$ as $k\to\infty$ such that
$$(\partial_t\varphi+\lambda\log|S|^2_h)(x_k,t_k)\ge C_1e^{-\frac{1}{4}t_k},$$
which in particular implies that $x_k$ in fact will be contained in some fixed $K\subset\subset X\setminus D$, since $\partial_t\varphi$ is uniformly bounded.
Set $\theta_k:=\frac{k}{2C_0}e^{-\frac{1}{4}t_k}$, where $C_0$ is a constant such that $\sup_{(X\setminus D)\times[1,\infty)}|\partial_t(\partial_t\varphi)|\le C_0$ defined as before. Then, $(\partial_t\varphi+\lambda\log|S|^2_h)(x_k,t)\ge\frac{C_1}{2}e^{-\frac{1}{4}t_k}$ for all $t\in[t_k,t_k+\theta_k]$. On the one hand we have
\begin{align}\label{local4}
&(\varphi+\delta|S|^{2\beta}_h-f^*\psi)(x_k,t_k+\theta_k)-(\varphi+\delta|S|^{2\beta}_h-f^*\psi)(x_k,t_k)\nonumber\\
&\le\sup_{X\setminus D}|\varphi+\delta|S|^{2\beta}_h-f^*\psi|(t_k+\theta_k)+\sup_{X\setminus D}|\varphi+\delta|S|^{2\beta}_h-f^*\psi|(t_k)\nonumber\\
& \le \tilde C(e^{-\frac{3}{4}(t_k+\theta_k)}+e^{-\frac{3}{4}t_k});\nonumber\\
&\le 2\tilde Ce^{-\frac{3}{4}t_k}.
\end{align}
On the other hand,
\begin{align}\label{local5}
&(\varphi+\delta|S|^{2\beta}_h-f^*\psi)(x_k,t_k+\theta_k)-(\varphi+\delta|S|^{2\beta}_h-f^*\psi)(x_k,t_k)\nonumber\\
&\ge\int_{t_k}^{t_k+\theta_k}(\partial_t\varphi)(x_k,t)dt\nonumber\\
&\ge\int_{t_k}^{t_k+\theta_k}(\partial_t\varphi+\lambda\log|S|^2_h)(x_k,t)dt\nonumber\\
&\ge\theta_k\cdot \frac{C_1}{2}e^{-\frac{1}{4}t_k}\nonumber\\
&=\frac{C_1^2}{4C_0}e^{-\frac{1}{2}t_k}\nonumber\\
&\ge\frac{C_1^2}{4C_0}e^{-\frac{3}{4}t_k}.
\end{align}
By combining with \eqref{local4} and \eqref{local5} we get a contradiction:
$$8\tilde CC_0+1=C^2_1\le 8\tilde CC_0.$$
Therefore, Claim is proved.
\end{proof}
Let $\lambda\to0$ in \eqref{local3}, we get, for all $t\in[1,\infty)$,
\begin{equation}\label{local6}
\sup_{X\setminus D}\partial_t\varphi\le C_1e^{-\frac{1}{4}t}.
\end{equation}
Similarly, we can get, for all $t\in[1,\infty)$,
\begin{equation}\label{local7}
\inf_{X\setminus D}\partial_t\varphi\ge -C_1e^{-\frac{1}{4}t}.
\end{equation}
Having \eqref{local6} and \eqref{local7}, \eqref{local2.1} follows and Lemma \ref{prop_conv.1} is proved.
\end{proof}

\begin{lem}\label{local5.1}
There exist two positive constants $C,\gamma$ such that for any $t\in[1,\infty)$,
\begin{equation}\label{local6}
\sup_{X\setminus D}(|S|^{2\gamma}_h(tr_{\omega(t)}f^*\overline\chi-k))\le Ce^{-\frac{1}{8}t}.
\end{equation}
\end{lem}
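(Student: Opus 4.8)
The plan is to show that, away from $D$, $\omega(t)$ is exponentially close to the collapsing model $\hat\omega_t:=e^{-t}\overline\omega_0+f^*\overline\chi$ and to read off the trace bound, following the strategy of Tosatti--Weinkove--Yang \cite{TWY}. Two ingredients feed in: the exponential decay of the Monge--Amp\`ere density (Lemma~\ref{prop_conv.1}) and a Schwarz-lemma type upper bound for $tr_{\omega(t)}f^*\overline\chi$; they are combined by an elementary inequality that costs a square root, which is why the exponent drops from $e^{-t/4}$ to $e^{-t/8}$. The power $|S|^{2\gamma}_h$ will play the role of the auxiliary functions $\pm\lambda\log|S|^2_h$ used throughout Section~\ref{weak_conv}, absorbing the cone singularity of $\overline\chi$ along $D'$.

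First I would record the algebraic identity. By the construction of $\overline\chi$ in subsection~\ref{subs_limit} one has $C^n_k\overline\omega_0^{\,n-k}\wedge f^*\overline\chi^{\,k}=e^{f^*\psi}|S|^{-2(1-\beta)}_h\Omega$, so \eqref{TCCMA} rewrites as
\begin{equation*}
\frac{e^{(n-k)t}\,\omega(t)^n}{C^n_k\,\overline\omega_0^{\,n-k}\wedge f^*\overline\chi^{\,k}}=e^{\,\partial_t\varphi+\varphi+\delta|S|^{2\beta}_h-f^*\psi}=:e^{W},\qquad |W|\le Ce^{-t/4}\ \text{on}\ (X\setminus D)\times[1,\infty),
\end{equation*}
the last bound being Lemma~\ref{prop_conv.1}. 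By Proposition~\ref{prop2}, $\omega(t)$ is uniformly equivalent to $e^{-t}\omega_0+f^*\chi^*$, hence (for $t\ge T_0$, and since $\overline\chi$ is uniformly equivalent to $\chi^*$ by \cite{CGP,GP}) to $\hat\omega_t$; expanding the wedge products, using $f^*\overline\chi^{\,k+1}=0$ and the displayed identity, the eigenvalues $\nu_1\ge\cdots\ge\nu_n>0$ of $\hat\omega_t$ relative to $\omega(t)$ then satisfy $\nu_i\le C$ and $\prod_i\nu_i=e^{-W}(1+O(e^{-t}))\ge 1-Ce^{-t/4}$ on $(X\setminus D)\times[T_0,\infty)$.

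The main step is the complementary trace bound $tr_{\omega(t)}\hat\omega_t\le n+C|S|^{-2\gamma}_h e^{-t/8}$. Here I would run the parabolic Schwarz-lemma computation of the proof of Lemma~\ref{C2.1}, now with the conical target $\overline\chi$ (working if necessary with the model $\chi^*$, which has an upper bound $B$ for its bisectional curvature on $Y\setminus D'$ by \cite[Proposition A.1]{JMR}, and controlling $f^*(\overline\chi-\chi^*)$ through the bounded potential $\psi-\delta|S'|^{2\beta}_{h'}$) together with the smooth form $\overline\omega_0$, which has bounded bisectional curvature on $X$. This yields, on $(X\setminus D)\times[T_0,\infty)$, an inequality of the shape $(\partial_t-\Delta_{\omega(t)})\log\bigl(tr_{\omega(t)}\hat\omega_t-n+\xi(t)\bigr)\le \overline B\,tr_{\omega(t)}f^*\chi^*+\text{(exp.\ small)}$ with a correction $\xi(t)=O(e^{-t/4})$ built from $\partial_t\varphi+\varphi+\delta|S|^{2\beta}_h-f^*\psi$ (keeping the logarithm's argument positive and dominating the $-\langle\frac{1}{T_{max}}\omega_0,\cdot\rangle$ term). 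Subtracting $A\varphi$ with $A$ large to create the absorbing term $-\tfrac12 tr_{\omega(t)}f^*\chi^*$ after adding $\gamma\log|S|^2_h$ (as in \eqref{ineq11}), and feeding the decay of $\xi(t)$ and of $e^{W}-1$ into the maximum principle started at $t=T_0$, gives the asserted bound. Finally, since $tr_{\hat\omega_t}f^*\overline\chi=n-e^{-t}tr_{\hat\omega_t}\overline\omega_0=k+O(e^{-t})$, the two estimates combine with the elementary fact that $\sum_i\nu_i\le n+\varepsilon$ and $\prod_i\nu_i\ge 1-\varepsilon$ with the $\nu_i$ bounded force $|\nu_i-1|\le C\sqrt\varepsilon$; hence $|\omega(t)-\hat\omega_t|_{\hat\omega_t}\le C|S|^{-\gamma}_h e^{-t/8}$ and $tr_{\omega(t)}f^*\overline\chi\le k+C|S|^{-2\gamma}_h e^{-t/8}$, which is the desired estimate.

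The main obstacle is the trace bound of the previous paragraph: the maximum-principle scheme of Lemma~\ref{C2.1}, as such, only produces $tr_{\omega(t)}\hat\omega_t\le C|S|^{-2\gamma}_h$, and promoting it to the sharp $n+C|S|^{-2\gamma}_h e^{-t/8}$ requires tracking the exponentially small forcing coming from $\partial_t\varphi\to0$ and from $e^{W}\to1$ --- this is the ``$C^2$ estimate along the collapsing flow'' of \cite{TWY}, and an alternative is to derive it from uniform interior higher-order estimates for the potential on compacta of $X\setminus D$ (with constants degenerating like a power of $|S|^{-2}_h$, from the degeneracy of \eqref{TCCMA} near $D$) interpolated against the $C^0$-decay of $\varphi+\delta|S|^{2\beta}_h-f^*\psi$. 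The secondary difficulty is just the cone singularity of $\overline\chi$ along $D'$, where the curvature bound and the higher regularity degenerate; this is exactly what the weight $|S|^{2\gamma}_h$ compensates, via the same $\gamma\log|S|^2_h$ auxiliary functions used in Lemmas~\ref{C0}--\ref{C2.2}.
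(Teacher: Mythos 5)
Your proposal assembles the right ingredients (the $e^{-t/4}$ decay of $u:=\partial_t\varphi+\varphi+\delta|S|^{2\beta}_h-f^*\psi$ from Lemma \ref{prop_conv.1}, a Schwarz-lemma computation for the conical target, and the weight $|S|^{2\gamma}_h$ to absorb the blow-up of the bisectional curvature bound of $\overline\chi$ near $D'$), but the step that actually produces the \emph{decay} of the trace is missing, and you concede as much in your last paragraph. The mechanism is not ``the scheme of Lemma \ref{C2.1} plus tracking exponentially small forcing'': that scheme, via the absorbing term $-\tfrac12 tr_{\omega(t)}f^*\chi^*$, can only ever give a uniform bound on the trace, and the inequality $(\partial_t-\Delta_{\omega(t)})\log\bigl(tr_{\omega(t)}\hat\omega_t-n+\xi(t)\bigr)\le\cdots$ you posit is not what the Schwarz lemma yields (the computation applies to $\log tr$, not to $\log(tr-n+\xi)$, whose argument need not even stay positive). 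The missing idea --- the linchpin of the paper's proof and of \cite[Lemma 3.4]{TWY} --- is the exact identity
$$(\partial_t-\Delta_{\omega(t)})\bigl(\partial_t\varphi+\varphi+\delta|S|^{2\beta}_h-f^*\psi\bigr)=tr_{\omega(t)}f^*\overline\chi-k.$$
Coupling this with $(\partial_t-\Delta_{\omega(t)})tr_{\omega(t)}f^*\overline\chi\le C|S|^{-2\gamma}_h$ (Schwarz lemma with the conical curvature bound of $\overline\chi$, plus the uniform trace bound of Lemma \ref{C2.1}), one applies the maximum principle to a quantity of the shape $e^{t/8}|S|^{2\gamma}_h(tr_{\omega(t)}f^*\overline\chi-k)-Ae^{t/4}u$: at a maximum point the term $-Ae^{t/4}(tr_{\omega(t)}f^*\overline\chi-k)$ coming from the evolution of $u$ dominates and forces $tr_{\omega(t)}f^*\overline\chi-k\le Ce^{-t/8}$ there, while $|u|\le Ce^{-t/4}$ controls the remainder. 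Without that identity there is no way to convert the $C^0$-decay of the potentials into decay of the trace by a maximum principle, which is exactly the obstacle you flag but do not resolve.

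A secondary point: you route the argument through the full metric equivalence $\omega(t)\approx e^{-t}\overline\omega_0+f^*\overline\chi$ at an exponential rate (your ``main step'' plus the pinching $\sum_i\nu_i\le n+\varepsilon$, $\prod_i\nu_i\ge1-\varepsilon\Rightarrow|\nu_i-1|\le C\sqrt\varepsilon$). That intermediate statement is strictly stronger than the lemma and is essentially the main $C^2$-estimate of \cite{TWY}, which there requires substantially more work (Calabi-type third-order estimates); the lemma only concerns the base-direction trace $tr_{\omega(t)}f^*\overline\chi$, for which the two displayed (in)equalities above suffice. Your rates are also internally inconsistent: pinching applied to a trace bound of order $e^{-t/8}$ would return $e^{-t/16}$, not $e^{-t/8}$. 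Finally, $\overline\omega_0$ is only a closed real $(1,1)$-form, positive on the fibers but not on $X$, so ``bounded bisectional curvature of $\overline\omega_0$'' does not make sense as stated.
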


\begin{proof}
Recall we have 
$$(\partial_t-\Delta_{\omega(t)})(\partial_t\varphi+\varphi+\delta|S|^{2\beta}_h-f^*\psi)=tr_{\omega(t)}f^*\overline\chi-k$$
and, for some $\gamma>0$,
$$(\partial_t-\Delta_{\omega(t)})tr_{\omega(t)}\overline\chi\le tr_{\omega(t)}\overline\chi\le+|S|^{2\gamma}_h(tr_{\omega(t)}\overline\chi)^2\le C|S|^{2\gamma}_h,$$
where $|S|^{2\gamma}_h$ is an uniform upper bound for bisectional curvature of $\overline\chi$ on $Y\setminus D'$ and we have used $tr_{\omega(t)}f^*\overline\chi$ is uniformly bounded by Lemma \ref{C2.1} (note that $\overline\chi$ and $\chi^*$ is uniformly equivalent). Then one can apply arguments in \cite[Lemma 3.4]{TWY} to conclude \eqref{local6}.
\par Lemma \ref{local5.1} is proved.
\end{proof}

\begin{lem}
For any given $K'\subset\subset Y\setminus D'$, and $l\in \mathbb Z_{\ge1}$, there exists a constant $C=C_{K,l}\ge1$ such that for any $y\in K'$ and $t\in[1,\infty)$,
\begin{equation}\label{local8}
|e^t\omega(t)|_{X_y}|_{C^l(X_y,\omega_0|_{X_y})}\le C_{K,l}.
\end{equation}
\end{lem}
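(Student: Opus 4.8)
The plan is to deduce the case $l=0$ directly from Proposition \ref{prop2}, and then to reach the higher-order bounds by a parabolic rescaling adapted to the fibration, in the spirit of \cite{TWY}.

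First I would fix an open set $K''$ with $K'\subset\subset K''\subset\subset Y\setminus D'$. Restricting the two-sided estimate \eqref{C2.3} of Proposition \ref{prop2} to a fibre $X_y$ with $y\in K''$, and using that $f^*\chi^*|_{X_y}=0$ together with the fact that $|S|^{2\beta}_h=f^*|S'|^{2\beta}_{h'}$ is constant along $X_y$, one obtains a uniform $C\ge1$ with
\[
C^{-1}\,\omega_0|_{X_y}\le e^t\omega(t)|_{X_y}\le C\,\omega_0|_{X_y}\qquad\text{on }(X\setminus D)\times[0,\infty),
\]
which is the assertion for $l=0$ and, in particular, makes the fibrewise complex Monge--Amp\`ere equation
\[
\bigl(\omega_0|_{X_y}+\sqrt{-1}\partial\bar\partial(e^t(\varphi-\overline\varphi)|_{X_y})\bigr)^{n-k}=F_{y,t}\,(\omega_0|_{X_y})^{n-k}
\]
(derived exactly as in the proof of Lemma \ref{C2.2}) uniformly elliptic, with $F_{y,t}$ uniformly bounded above and below on $f^{-1}(K'')$ by Lemmas \ref{C0} and \ref{C00}. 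The remaining point is therefore the bootstrap to higher derivatives, and here one must undo the collapsing.

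For the higher-order bounds I would argue in local charts and exploit the collapsing structure. By the holomorphic rank theorem one can cover a neighbourhood of $f^{-1}(K')$ by finitely many charts $W\cong U\times B$ ($U\subset K''$, $B\subset\mathbb{C}^{n-k}$ polydiscs) in which $f$ is the projection $(z,w)\mapsto z$; it suffices to bound $e^t\omega(t)$ in $C^l$ on the slices $\{y\}\times(\tfrac{1}{2}B)$ uniformly in $t\ge1$ and $y\in U\cap K'$. Fix $t\ge1$, let $\Phi_t$ be the biholomorphism of $W$ that rescales the base variables $z$ by $e^{t/2}$ and fixes the fibre variables $w$, and reparametrise time by $\tau=e^{s}$. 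Setting $\hat\omega(\tau):=\Phi_t^*\bigl(e^{s}\omega(s)\bigr)$ for $s\in[t-1,t]$, a direct computation using $Ric(c\,\omega)=Ric(\omega)$ shows that, over $W$ and away from $D$,
\[
\partial_\tau\hat\omega=-Ric(\hat\omega)+\frac{1}{T_{max}}\Phi_t^*\omega_0,
\]
that $\Phi_t^*\omega_0$ and all its derivatives are uniformly bounded on $W$ (its base and mixed blocks acquire extra powers of $e^{-t/2}$ and tend to $0$, the vertical block being fixed and semipositive, so the twist term causes no trouble, exactly as in the proof of Lemma \ref{scal}), and --- this is the crucial point --- that by Proposition \ref{prop2} the metric $\hat\omega(\tau)$ is uniformly equivalent to a fixed Euclidean metric on a fixed parabolic cylinder $\{|z-y_0|<\rho\}\times B\times[e^t-\rho^2,e^t]$, uniformly in $t$ and $y_0$; the bound on $\partial_t\varphi$ from Lemma \ref{C00} supplies the matching uniform control of the evolving volume form. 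Passing to local K\"ahler potentials of $\hat\omega(\tau)$ rephrases this as a parabolic complex Monge--Amp\`ere equation that is uniformly parabolic with smooth, uniformly bounded data on that fixed cylinder, whence the standard interior estimates (parabolic Evans--Krylov followed by a parabolic Schauder bootstrap; cf. \cite{TWY} and the references therein) give uniform $C^{k}$-bounds for $\hat\omega(\tau)$ at $\tau=e^{t}$ on the half cylinder, for every $k$. Undoing $\Phi_t$ converts these into uniform $C^{l}$-bounds for $e^t\omega(t)$ in the $(z,w)$-coordinates, in particular for its restriction to $\{y\}\times(\tfrac{1}{2}B)$; letting $y_0$ range over $K'$ and covering each $X_y$ by finitely many such charts yields \eqref{local8}.

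The main obstacle is precisely the uniformity in $t$: since $\omega(t)$ itself collapses, interior parabolic estimates cannot be applied to it directly, and one must first absorb the collapse by rescaling the base and fibre directions by different powers of $e^{t/2}$ and reparametrising time --- it is at this point that the submersion structure and the sharp two-sided bound of Proposition \ref{prop2} are indispensable. The only additional subtlety is to check that, when $T_{max}<\infty$, the rescaled twist $\frac{1}{T_{max}}\Phi_t^*\omega_0$ remains smooth and uniformly bounded together with all its derivatives; as indicated above this holds because the rescaling suppresses everything except the fixed, semipositive vertical part of $\omega_0$, so it behaves just like the harmless twisting terms already handled in Section \ref{sect_scal}.
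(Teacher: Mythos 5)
Your proposal is correct and is essentially the paper's own argument: the paper simply invokes \cite[Theorem 1.1]{ToZyg}, whose proof is exactly your base-direction rescaling (your $\Phi_t$ is the ``stretching map'' $F_k$ of \cite{ToZyg}) combined with the uniform equivalence from Proposition \ref{prop2} and local interior parabolic estimates. The paper's only added remark --- that the pulled-back twist $\frac{1}{T_{max}}F_k^*\omega_0$ remains smooth with uniformly bounded norm, so the twisted term causes no trouble --- is precisely your concluding observation.
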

\begin{proof}
This lemma can be checked by the same arguments in \cite[Theorem 1.1]{ToZyg}, as we have Proposition \ref{prop2}. To see that arguments in \cite[page 2933-2934]{ToZyg} work in our twisted case, it suffices to observe that, after pulling back the twisted term $\frac{1}{T_{max}}\omega_0$ by the "stretching map" $F_k$ defined in \cite[page 2933]{ToZyg}, we get a smooth $(1,1)$-form $\frac{1}{T_{max}}F^*_k\omega_0$, whose $C^1$-norm with respect to a local Euclidean metric is uniformly bounded. Then we can get the desired conclusions.
\end{proof}

With all the above preparations, we now conclude the main result in this section.
\begin{prop}\label{prop3}
There exists a positive constant $\varepsilon_0$ such that, for any $K\subset\subset X\setminus D$ there exists a constant $C_K\ge1$ such that, for any $t\in[1,\infty)$,
\begin{equation}
|\omega(t)-f^*\overline\chi|_{C^0(K,\omega_0)}\le C_Ke^{-\varepsilon_0t}.
\end{equation}
\end{prop}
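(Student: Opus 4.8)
The plan is to combine the fiberwise convergence, the collapsing estimate from Proposition \ref{prop2}, and the exponential decay of the potential (Proposition \ref{prop_conv}) to upgrade to a local $C^0$ estimate, following the strategy of \cite{TWY}. First I would decompose, on a fixed $K\subset\subset X\setminus D$, the difference $\omega(t)-f^*\overline\chi$ into a ``vertical'' part and a ``horizontal'' part relative to the submersion $f$. For the vertical part, the key observation is that the fiberwise metric $e^t\omega(t)|_{X_y}$ has uniformly bounded geometry (this is the content of the preceding lemma, estimate \eqref{local8}), and moreover $\varphi|_{X_y}-\overline\varphi(y,t)$ is controlled via the argument already used in Lemma \ref{C2.2}; hence the restriction $\omega(t)|_{X_y}$, being $O(e^{-t})$ in size, contributes a term of size $O(e^{-t})$ to $\omega(t)-f^*\overline\chi$ on $K$, since $f^*\overline\chi$ is purely horizontal.

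For the horizontal part, I would use that, by Lemma \ref{prop_conv.1}, $\partial_t\varphi+\varphi+\delta|S|^{2\beta}_h-f^*\psi$ decays like $e^{-t/4}$ on $X\setminus D$, and that this quantity satisfies $(\partial_t-\Delta_{\omega(t)})(\cdot)=tr_{\omega(t)}f^*\overline\chi-k$. Combined with Lemma \ref{local5.1}, which gives $|S|^{2\gamma}_h(tr_{\omega(t)}f^*\overline\chi-k)\le Ce^{-t/8}$, one controls $tr_{\omega(t)}f^*\overline\chi$ locally, and then an argument as in \cite[Lemma 3.4]{TWY} (a localized parabolic maximum principle using a cutoff supported away from $D$) propagates the decay to show $tr_{\omega(t)}f^*\overline\chi\to k$ exponentially on $K$. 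Together with the two-sided bound of Proposition \ref{prop2}, which forces $\omega(t)$ to be uniformly comparable to $f^*\chi^*$ in the horizontal directions, this pins down the horizontal part of $\omega(t)$ to agree with $f^*\overline\chi$ up to an error $O(e^{-\varepsilon_0 t})$; here one uses that $tr_{\omega(t)}f^*\overline\chi\to k$ together with a uniform lower bound on the horizontal part of $\omega(t)$ implies convergence of the full horizontal $(1,1)$-form, not merely its trace, by a standard linear-algebra argument (if a positive form has $n$ eigenvalues near the eigenvalues of a fixed form and the trace converges, the form converges).

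The exponent $\varepsilon_0$ will be the minimum of the various decay rates appearing ($\tfrac14$, $\tfrac18$, possibly reduced by the cutoff/interpolation losses), and one should take $\varepsilon_0\in(0,1)$ as promised in the statement of Theorem \ref{main_thm} (2.2). The main obstacle I anticipate is the localization near $D$: all the global estimates degenerate along the cone divisor, so the weights $|S|^{2\gamma}_h$ must be tracked carefully through the maximum principle argument, and one must ensure the cutoff functions and the resulting error terms only cost a fixed power of $|S|_h^2$ that is harmless on the fixed compact set $K$. A secondary point requiring care is that the parabolic Schwarz-type inequality for $tr_{\omega(t)}f^*\overline\chi$ uses the bisectional curvature bound for $\overline\chi$ (which, unlike $\chi^*$, is the conical Kähler-Einstein-type metric on $Y$), so one should confirm that $\overline\chi$ and $\chi^*$ being uniformly equivalent suffices to import the needed curvature bound, exactly as indicated in the proof of Lemma \ref{local5.1}.
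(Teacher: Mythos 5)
Your proposal follows essentially the same route as the paper, which itself only sketches the proof by deferring to \cite[Section 2.5]{TWY}: fiberwise convergence of $e^t\omega(t)|_{X_y}$ via the higher-order estimates \eqref{local8}, the trace estimate of Lemma \ref{local5.1}, and the potential decay, assembled into a $C^0$ bound on $K$. One step needs correcting, however: the ``standard linear-algebra argument'' as you state it is not valid. Convergence of $tr_{\omega(t)}f^*\overline\chi$ to $k$ together with a two-sided bound on $\omega(t)$ does \emph{not} pin down the horizontal part of $\omega(t)$ --- the $k$ horizontal eigenvalues can drift apart while their sum stays equal to $k$ (your parenthetical hypothesis ``has $n$ eigenvalues near the eigenvalues of a fixed form'' is already the conclusion you want). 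What closes the argument in \cite{TWY} is the additional convergence of the volume-form ratio, $\frac{e^{(n-k)t}\omega(t)^n}{C_k^n\overline\omega_0^{n-k}\wedge f^*\overline\chi^k}\to1$ exponentially, which you do have from Lemma \ref{prop_conv.1} (equivalently Proposition \ref{prop_conv}); trace and determinant together, via the nonnegative quantity $tr_{\omega}\hat\omega-n-\log\frac{\hat\omega^n}{\omega^n}=\sum_i(\lambda_i-1-\log\lambda_i)$, which dominates a multiple of $|\omega-\hat\omega|^2$ once the metrics are uniformly equivalent (Proposition \ref{prop2}), force all eigenvalues to $1$. With that amendment your outline matches the paper's argument.
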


\begin{proof}
Having the above results, one can first prove $$|e^t\omega(t)|_{X_y}-\overline\omega_0|_{X_y}|_{C^0(X_y,\omega_0|_{X_y})}\le C_Ke^{-\varepsilon t}$$ for any $y\in K$, and then $$|\omega(t)-(e^{-t}\overline\omega_0+f^*\overline\chi)|_{C^0(K,\omega_0)}\le C_Ke^{-\varepsilon_0t}$$ by the same arguments in \cite[Section 2.5]{TWY}. We omit details here. Therefore, Proposition \ref{prop3} follows.
\end{proof}

\section{Gromov-Hausdorff convergence}\label{sect_gh}
We are going to prove item (2.3) in Theorem \ref{main_thm}, i.e. Gromov-Hausdorff convergence.
\par Throughout this section, we will always assume $dim(Y)=1$ and so $D'$ is a single point $o\in Y$. To obtain Gromov-Hausdorff convergence, the key point is to apply Proposition \ref{prop2} to bound diameter of the cone divisor $X_o$. 
We set $B'_\delta:=\{y\in Y|d_{\chi}(y,o)<\delta\}$ and $B_\delta:=f^{-1}(B_\delta)$. Assume without loss of generality that $\delta$ is always small enough such that $B_\delta$ is the standard disc in $\mathbb C$ and $o=0\in\mathbb C$. Fix an integer $L\ge1$ such that 
\begin{equation}
diam(B_{\varepsilon^L},\bar d)\le \frac{\varepsilon}{4}.
\end{equation}
The key observation is the following
\begin{lem}\label{gh0}
There exists a constant $C\ge1$ such that, for any $\varepsilon>0$, there exists a positive constant $T=T_\varepsilon$ such that for all $t\ge T$,
\begin{equation}
diam(B_{\varepsilon^L},d_t)\le C\varepsilon
\end{equation}
\end{lem}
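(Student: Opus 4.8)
The plan is to estimate the $d_t$-diameter of $B_{\varepsilon^L}=f^{-1}(B'_{\varepsilon^L})$ by splitting any path-length estimate into a ``base direction'' and a ``fiber direction'', using the uniform equivalence $\omega(t)\asymp e^{-t}\omega_0+f^*\chi^*$ from Proposition~\ref{prop2}. More precisely, first I would fix two arbitrary points $p,q\in B_{\varepsilon^L}$ and connect them by a curve that runs along a fixed reference fiber $X_{y_0}$ (with $y_0\in B'_{\varepsilon^L}$) and then along a path in the base lifted suitably. For the base portion, since $dim(Y)=1$, the segment from $f(p)$ (resp.\ $f(q)$) to $y_0$ sits inside $B'_{\varepsilon^L}$ and has $\overline\chi$-length (equivalently $\chi^*$-length, by uniform equivalence of $\overline\chi$ and $\chi^*$) at most $C\varepsilon$ by the choice of $L$ together with $diam(B'_{\varepsilon^L},\overline d)\le\varepsilon/4$; pulling this back by $f$ and using $\omega(t)\le C(e^{-t}\omega_0+f^*\chi^*)\le C\,\omega_0^*$ on a fixed compact piece, or more carefully $\omega(t)\le C f^*\chi^*+Ce^{-t}\omega_0$, its $\omega(t)$-length is controlled by $C\varepsilon+Ce^{-t/2}\cdot(\text{length in }\omega_0)$.

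The fiber portion is where the collapsing is essential: along $X_{y_0}$ we have $\omega(t)|_{X_{y_0}}\le C e^{-t}\omega_0|_{X_{y_0}}$ (the $f^*\chi^*$ term restricts to zero on a fiber), so any curve inside the fiber of bounded $\omega_0$-length has $\omega(t)$-length at most $Ce^{-t/2}$, which tends to $0$. Thus for $t$ large this contributes at most $\varepsilon$. Combining, $d_t(p,q)\le C\varepsilon$ once $t\ge T_\varepsilon$ with $T_\varepsilon$ chosen so that $Ce^{-T_\varepsilon/2}\le\varepsilon$; taking the supremum over $p,q$ gives $diam(B_{\varepsilon^L},d_t)\le C\varepsilon$. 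One technical point to handle is that $B_{\varepsilon^L}$ meets the cone divisor $X_o=f^{-1}(o)$, so the curves above must be taken in $X\setminus D$ and the resulting length bounds extended to the metric completion $(X,d_t)$; this is routine because $\omega(t)\le C(e^{-t}\omega_0+f^*\chi^*)\le C\omega_0^*$ globally and $(X,d_0)$ (the completion of $(X\setminus D,\omega_0^*)$) has finite diameter, so distances are computed as infima of lengths of curves that may be perturbed off $D$ without loss.

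The main obstacle I expect is not the fiber collapse (which is immediate from Proposition~\ref{prop2}) but ensuring the base-direction estimate is genuinely uniform in $t$: one needs that a path in $B'_{\varepsilon^L}$ realizing nearly the $\overline\chi$-diameter can be lifted to $X$ so that its $\omega(t)$-length is bounded by (a constant times) its $\overline\chi$-length plus a term decaying in $t$, \emph{uniformly} as the path approaches $o$ where $\chi^*$ (hence $\omega(t)$ near $D$) is singular. Since $dim(Y)=1$, this is tractable: the lift can be taken as $F_t^{-1}$ of a path, or simply as a horizontal curve, and the bound $\omega(t)\le C(e^{-t}\omega_0+f^*\chi^*)$ reduces everything to comparing $f^*\chi^*$-length with the intrinsic $\chi^*$-distance in the one-dimensional base, where a conical metric with angle $2\pi\beta$ at a point has finite diameter on small balls with the expected scaling. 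I would therefore carry out the estimate directly on $B'_{\varepsilon^L}$ using the explicit model cone metric near $o$, then transport it upstairs, and finally absorb the fiber contribution; the constant $C$ is the one from Proposition~\ref{prop2} (times fixed geometric constants comparing $\overline\chi$, $\chi^*$ and $\chi$), independent of $\varepsilon$.
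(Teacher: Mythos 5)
Your proposal is correct, and it rests on the same two pillars as the paper's proof: Proposition \ref{prop2} applied in the horizontal direction, where $\omega(t)$ restricted to a slice $\{z=\text{const}\}$ is dominated by $e^{-t}\omega_0+f^*\chi^*$ and hence by a conical metric on the disc of $\chi^*$-diameter $O(\varepsilon)$, and Proposition \ref{prop2} applied in the fiber direction, where $f^*\chi^*$ restricts to zero and the $e^{-t}\omega_0$ term forces collapse at rate $e^{-t/2}$. The only genuine difference is the routing. The paper pushes both points horizontally out to the boundary circle $\partial B'_{\varepsilon^L}$, away from the cone divisor, and then joins the two boundary points inside $f^{-1}(\partial B'_{\varepsilon^L})$; that last step requires moving between distinct fibers over the circle and therefore invokes the local $C^0$-convergence of Proposition \ref{prop3} together with the connecting argument of \cite{ZyZz1}. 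You instead route both points to a single interior reference fiber $X_{y_0}$ and traverse that fiber, which needs only Proposition \ref{prop2} and the bound $diam(X_{y_0},\omega(t)|_{X_{y_0}})\le Ce^{-t/2}diam(X_{y_0},\omega_0|_{X_{y_0}})$; this is slightly more self-contained for the present lemma (Proposition \ref{prop3} is of course still needed elsewhere for the Gromov--Hausdorff statement). The price is bookkeeping you correctly flag as routine: the two horizontal lifts must each stay in a single product chart containing all of $B'_{\varepsilon^L}$ in its base factor, base paths must avoid $o$ at bounded extra $\chi^*$-length (possible since $\dim Y=1$ and the cone angle is positive), and one must take $y_0\neq o$, since Proposition \ref{prop2} holds only on $X\setminus D$ and the singular fiber $X_o$ is exactly $D$.
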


\begin{proof}
The proof will make use of Proposition \ref{prop2} crucially. Firstly, since $X_o$ is compact, we choose a family of local charts $\{U_i,(y,z^1,...,z^{n-1})\}_{i=1}^{I}$, such that the chart $U_i$ is centered at some point $x_i\in X_o$, $B_{\varepsilon^L}\subset \cup_{i=1}^IU_i$ and $f(y,z^1,...,z^{n-1})=y$. For any two points $x',x''\in B_{\varepsilon^L}\setminus X_o$, we may assume $x'=(y',z')\in U_1$ and $x''=(y'',z'')\in U_2$. Then when we restrict to the subset $Y_{z'}:=\{(y,z')\in U_1|y\in B'_{\varepsilon^L}\}$, we know from Proposition \ref{prop2} that $\omega(t)|_{Y_{z'}}$ is a conical metric on $Y_{z'}$ with cone angle $2\pi\beta$ at $(0,z')$. Therefore, we can easily choose a point $\bar y'\in \partial B'_{\varepsilon^L}$, and set $\bar x':=(\bar y',z')\in Y_{z'}$, such that 
\begin{equation}\label{gh1}
d_t(x',\bar x')\le \hat C\varepsilon,
\end{equation}
here $\hat C$ is a positive constant only depends on the constant $C$ in Proposition \ref{prop2}, \eqref{C2.3} (in particular, $\hat C$ does not depend on $x',x''$ and $t$). Similarly, one can find a point $\bar x'':=(\bar y'',z'')$ with $\bar y''\in\partial B'_{\varepsilon^L}$ such that
\begin{equation}\label{gh2}
d_t(x'',\bar x'')\le \hat C\varepsilon,
\end{equation} 
On the other hand, since $\overline\chi$ is a conical K\"ahler metric, the diameters of fibers $X_y$ away from $X_o$ uniformly collapse in the rate $e^{-\frac{1}{2}t}$ by Proposition \ref{prop2} and $\omega(t)\to f^*\overline\chi$ in $C^0(f^{-1}(Y\setminus B'_{\frac{1}{2}\varepsilon^L},\omega_0)$-topology by Proposition \ref{prop3}, one can use arguments in \cite[Section 3]{ZyZz1} to connect $\bar x',\bar x''$ by a piecewise smooth curve $\sigma\subset f^{-1}(\partial B'_{\varepsilon^L})$ such that
\begin{equation}\label{gh3}
L_{d_t}(\sigma)\le \varepsilon+\hat Ce^{-\frac{1}{2}t}.
\end{equation}
In conclusion, by combining \eqref{gh1}, \eqref{gh2} and \eqref{gh3}, we can choose a positive constant $T$ (independent on $x',x''$) such that for any $t\ge T$,
\begin{equation}
d_t(x',x'')\le(\hat C+2)\varepsilon.
\end{equation}
\par Lemma \ref{gh0} is proved.
\end{proof}

\begin{proof}[Proof of Theorem \ref{main_thm} (2.3)]
Having Lemma \ref{gh0}, we can easily apply arguments in \cite[Section 3]{ZyZz1} (also see \cite[Section 4]{Zy17}) to conclude Theorem \ref{main_thm} (2.3).
\end{proof}

\section{Remarks on the twisted K\"ahler-Ricci flow}\label{sect_rem}
The last section provides more remarks on the twisted K\"ahler-Ricci flow we have studied. Here we remove the conical singularity. Given a projective manifold $X$ with a rational K\"ahler metric $\omega_0\in2\pi c_1(H)$ ($H$ is some ample line bundle on $X$), we have the smooth K\"ahler-Ricci flow starting from $\omega_0$ on $X$:
\begin{equation}\label{KRF}
\left\{
\begin{aligned}
\partial_t\omega(t)&=-Ric(\omega(t))\\
\omega(0)&=\omega_0,
\end{aligned}
\right.
\end{equation}
By \cite{C,Ts,TZo},  the K\"ahler-Ricci flow \eqref{KRF} has a smooth solution up to
$$T_{max}:=\{t>0|[\omega_0]+t2\pi c_1(K_X)>0\}=\{t>0|H+tK_X>0\}.$$
Then $T_{max}=\infty$ if and only if $X$ is a smooth minimal model. 
\par Assume $T_{max}<\infty$. Then $T\in\mathbb Q$ and $H+T_{max}K_X$ is semi-ample (see e.g. \cite{Ma}). By semi-ample fibration theorem there exists a holomorphic map
\begin{equation}\label{map}
f:X\to f(X)\subset\mathbb{CP}^N,
\end{equation}
with the image $Y:=f(X)$ an irreducible normal projective variety of dimension $0\le k\le n$, connected fibers and 
\begin{equation}\label{eqn1}
f^*\mathcal O_{\mathbb{CP}^N}(1)=H+T_{max}K_X.
\end{equation}
Let's first recall some basic ideas of Song and Tian \cite[Section 6.2]{ST17} on contracting or collapsing certain positive part of $c_1(X)$ by the K\"ahler-Ricci flow, which indicates a deep relation between finite-time singularity of the K\"ahler-Ricci flow and the Minimal Model Program in algebraic geometry.
\begin{itemize}
\item [(1)] If $k=0$, then $H=-T_{max}K_X$, i.e. $X$ is a Fano manifold with $\omega_0\in T_{max}c_1(X)$. The K\"ahler-Ricci flow in this case has been studied by many works, see e.g. discussions next to \cite[Conjecture 6.6]{ST17} and references therein for more comments. Here we will focus on the $k>0$ case.
\item [(2)] If $1\le k\le n$, then it is conjectured in \cite{ST17} that, as $t\to T_{max}$, $(X,\omega(t))\to(Y,d_Y)$ in Gromov-Hausdorff topology, here $d_Y$ is some compact metric on $Y$ and should be induced by a smooth K\"ahler metric on $Y$ outside a subvariety.
\end{itemize}
Roughly speaking, \eqref{eqn1} means there are certain "positive parts" in $c_1(X)$ (and so $X$ is not a minimal model), and the picture in the above item (2) means one can use the K\"ahler-Ricci flow to contract/collapse these "positive parts" in Gromov-Hausdorff topology and arrive at a new space, which is closer to a minimal model in some sense (see \cite[Section 6.2]{ST17} for more precise discriptions). There are several progresses on above item (2), see e.g. \cite{Fo,FuZs,SSW,SW,ToZyg1}. However, it seems in general the finite time convergence of the K\"ahler-Ricci flow are still unclear. 
\par La Nave and Tian \cite{LT} proposed a continuity method approach to achieve the above picture, see \cite{LT,LTZ,ZyZz1,ZyZz2,Zy17} for more discussions and results on this direction.\\

\par Here we would like to discuss the possibility of deforming $X$ to $Y$ (given $f:X\to Y$ as in above \eqref{map}) by using a twisted K\"ahler-Ricci flow. Given the above setting, we consider, for an arbitrary K\"ahler metric $\omega_X$,
\begin{equation}\label{TKRF}
\left\{
\begin{aligned}
\partial_t\omega(t)&=-Ric(\omega(t))-\omega(t)-\frac{1}{T_{max}}\omega_0\\
\omega(0)&=\omega_X,
\end{aligned}
\right.
\end{equation}

If we fix a K\"ahler metric $\chi_Y$ on $Y$ with $f^*\chi_Y\in\frac{1}{T_{max}}[\omega_0]+2\pi c_1(K_X)$, then we easily see that the K\"ahler class along \eqref{TKRF} satisfies
$$[\omega(t)]=e^{-t}[\omega_X]+(1-e^{-t})[f^*\chi],$$
which stays positive for any $t\in[0,\infty)$. Therefore, \eqref{TKRF} has a smooth long time solution $\omega(t)$ on $X\times[0,\infty)$. Then one can prove $\omega(t)$ converges to $f^*\overline\chi_Y$ as currents on $X$, where $\overline\chi_Y$ is a K\"ahler current solved by a possibly singular complex Monge-Amp\`ere equation on $Y$. Precisely, we set $V$ be the singular set of $Y$ together with critical values of $f$ and fix a $(1,1)$-form $\overline\omega_X\in[\omega_X]$ on $X\setminus f^{-1}(V)$ such that $\overline\omega_X|_{X_y}$ is a K\"ahler metric on $X_y$ and $Ric(\overline\omega_X|_{X_y})=\frac{1}{T_{max}}\omega_0|_{X_y}$ for all $y\in Y\setminus V$. Also fix a smooth volume form $\Omega$ on $X$ with $\sqrt{-1}\partial\bar\partial\log\Omega=f^*\chi_Y-\frac{1}{T_{max}}\omega_0$. Then $\overline\chi_Y:=\chi_Y+\sqrt{-1}\partial\bar\partial\psi$ is solved by
$$(\chi_Y+\sqrt{-1}\partial\bar\partial\psi)^k=e^{\psi}\frac{\Omega}{C^n_k(\overline\omega_X)^{n-k}\wedge f^*\chi_Y^k}\chi_Y^k.$$
Since $\chi_Y$ is rational and $0<C^{-1}\le\frac{\Omega}{C^n_k(\overline\omega_X)^{n-k}\wedge f^*\chi_Y^k}\in L^{1+\epsilon}(Y,\chi_Y^k)$ (see \cite[Proposition 3.2]{ST12}), we can find a unique $\psi\in PSH(Y,\chi_Y)\cap L^\infty(Y)\cap C^\infty(Y\setminus V)$ solving the above equation (see \cite[Section 3.2, Theorem 3.2]{ST12}).
\begin{itemize}
\item[(a)] Case $0<k<n$. If additionally $Y$ is smooth and $f:X\to Y$ is a submersion, then $\overline\chi_Y$ is a K\"ahler metric on $Y$ and $\omega(t)\to f^*\overline\chi_Y$ in $C^0(X,\omega_0)$-topology exponentially fast (see Theorem \ref{main_thm} (2.2)) and hence $(X,\omega(t))\to(Y,\overline\chi_Y)$ in Gromov-Hausdorff topology. This provides an alternative way (using the twisted K\"ahler-Ricci flow) to deform a Fano bundle to the base in Gromov-Hausdorff topology (compare with the works in \cite{Fo,FuZs,SSW}).
\item[(b)] Case $k=n$. In this case $\overline\chi_Y$ is in fact solved by
$$(f^*\chi_Y+\sqrt{-1}\partial\bar\partial\psi)^n=e^{\psi}\Omega$$
on $X$. Then $\overline\chi_Y$ is a smooth K\"ahler metric on $X\setminus f^{-1}(V)$ and $\omega(t)\to\overline\chi_Y$ smoothly on $X\setminus V$. In fact, it can be checked that $\overline\chi_Y$ coincides with the limit of the continuity method studied in \cite[Theorems 1.1, 1.2]{LTZ}, and so the metric completion of $(X\setminus f^{-1}(V),\overline\chi_Y)$ is a compact metric space homeomorphic to $Y$ by \cite[Theorem 1.2]{LTZ}; denote this limit space by $(Y,d_Y)$. On the other hand, since the twisted term $\frac{1}{T_{max}}\omega_0$ is nonnegative, it is very likely that one can extend a generalized Perelman's no-local-collapsing theorem of Wang \cite[Theorem 1.1]{W} or Q. Zhang \cite[Theorem 6.3.2]{Zq} to our twisted setting. Also note that we have a uniform bound for the twisted scalar curvature $tr_{\omega(t)}(Ric(\omega(t))-\frac{1}{T_{max}}\omega_0)$, see Section \ref{sect_scal}. Therefore, by Wang's argument in \cite[Theorem 8.2]{W}, we may obtain a uniform diameter upper bound along the twisted K\"ahler-Ricci flow \eqref{TKRF} in this volume noncollapsing case (i.e. $k=n$). The remaining question in this case is: \emph{can we prove $(X,\omega(t))\to(Y,d_Y)$ in Gromov-Hausdorff topology?} We will study this question in the future work.
\end{itemize}

\section*{Acknowledgements}
The author thanks Prof. Huai-Dong Cao, Prof. Gang Tian, Prof. Zhenlei Zhang and Dr. Shaochuang Huang for useful discussions, Dr. Jiawei Liu for valuable comments and
 the referee for careful reading and valuable suggestions and comments. 
 Part of this work was carried out while the author was visiting University of Macau and Capital Normal University, which he would like to thank for the hospitality.

\end{document}